\newcommand{\map}[1]{\xrightarrow{#1}}
\newcommand{\iso}{\cong}
\newcommand{\Gal}{\mathrm{Gal}}
\newcommand{\Hom}{\mathrm{Hom}}
\newcommand{\Aut}{\mathrm{Aut}}
\newcommand{\End}{\mathrm{End}}
\newcommand{\Spec}{\mathrm{Spec}}
\newcommand{\Q}{\mathbb Q}
\newcommand{\Z}{\mathbb Z}
\newcommand{\R}{\mathbb R}
\newcommand{\C}{\mathbb C}
\newcommand{\F}{\mathbb F}
\newcommand{\co}{\mathcal O}
\newcommand{\alg}{\mathrm{alg}}
\newcommand{\action}{\bullet}
\newcommand{\Lie}{\mathrm{Lie}}
\newcommand{\naive}{\mathrm{naive}}
\newcommand{\pappas}{\mathrm{Pap}}
\newcommand{\kk}{{\bm{k}}}
\newcommand{\Gr}{\mathrm{Gr}}
\newcommand{\interior}{\mathrm{int}}
\newcommand{\bndry}{\mathrm{bnd}}
\newcommand{\ord}{\mathrm{ord}}
\newcommand{\serre}{\mathrm{Serre}}
\newcommand{\hh}{{\bm{h}}}
\begin{document}

\author{Benjamin Howard}
\title{Complex multiplication cycles and Kudla-Rapoport divisors II}
\thanks{This research was supported in part by NSF grant DMS-0901753.}
\address{Department of Mathematics\\Boston College\\Chestnut Hill, MA 02467}
\date{}
\subjclass[2000]{14G35, 14G40,11F41}
\keywords{Shimura varieties, Arakelov theory, arithmetic intersection theory}

\theoremstyle{plain}
\newtheorem{Thm}{Theorem}[subsection]
\newtheorem{Prop}[Thm]{Proposition}
\newtheorem{Lem}[Thm]{Lemma}
\newtheorem{Cor}[Thm]{Corollary}
\newtheorem{Conj}[Thm]{Conjecture}
\newtheorem{BigThm}{Theorem}

\theoremstyle{definition}
\newtheorem{Def}[Thm]{Definition}

\theoremstyle{remark}
\newtheorem{Rem}[Thm]{Remark}

\numberwithin{equation}{subsection}
\renewcommand{\theBigThm}{\Alph{BigThm}}


\begin{abstract}
This paper is about the arithmetic of  \emph{Kudla-Rapoport divisors} on  
 Shimura varieties of type $\mathrm{GU}(n-1,1)$.   In the first part of the paper we construct a 
toroidal  compactification of N.~Kr\"amer's integral model of the Shimura variety.  
This extends work of K.-W.~Lan, who constructed a compactification at unramified primes.

In the second, and main, part of the paper
we use ideas of Kudla to construct Green functions for the Kudla-Rapoport divisors on the open Shimura variety, 
and analyze the behavior of these functions near the boundary of the compactification.  
The Green functions turn out to have logarithmic singularities along certain components of the boundary,
up to log-log error terms.  Thus, by adding a prescribed linear combination of 
boundary components to a Kudla-Rapoport divisor one obtains a class  in the 
arithmetic Chow group  of  Burgos-Kramer-K\"uhn.  

In the third and 
final part of the paper we compute the arithmetic intersection of each of these divisors with 
a cycle of complex multiplication points. The computation is quickly reduced to the calculations
of the author's earlier work \emph{Complex multiplication cycles and Kudla-Rapoport divisors}.
The arithmetic intersection multiplicities are shown to appear as Fourier coefficients
of the diagonal restriction of the central derivative of a Hilbert modular Eisenstein series.
\end{abstract}

\maketitle

\section{Introduction}

Let $\kk$ be an imaginary quadratic field with discriminant $-d_\kk$,  let $x\mapsto \overline{x}$ denote complex conjugation on $\kk$, and fix an embedding $\iota: \kk\to \C$.   \emph{We assume throughout that $d_\kk$ is odd}.


\subsection{Integral models of unitary Shimura varieties}


For every pair $(r,s)$ of nonnegative integers one may  attempt to define a Deligne-Mumford stack
$\mathcal{M}_{(r,s)}$ over $\co_\kk$ as the moduli space of triples $(A,\kappa,\psi)$, 
where $A\to S$ is an abelian scheme of dimension $r+s$ over an $\co_\kk$-scheme $S$,
$\kappa:\co_\kk\to \End(A)$ is an action of $\co_\kk$ on $A$, and $\psi$ is an $\co_\kk$-linear
principal polarization of $A$ (see Section \ref{ss:notation} for the meaning of $\co_\kk$-linear).  
One  further demands that the induced action of $\co_\kk$ on 
$\Lie(A)$ satisfies a suitable \emph{signature $(r,s)$ condition}.  Such a signature condition
asserts, roughly speaking,
that $\Lie(A)$ behaves like the $\co_\kk\otimes_\Z \co_S$-module 
$\co_S^r\oplus \co_S^s$, where $\co_\kk$ acts on the factor $\co_S^r$ through the structure morphism
$\co_\kk\to \co_S$, and acts on the factor $\co_S^s$ through the complex conjugate
of the structure morphism.  Of course the subtlety lies in the precise meaning of ``behaves like",
and different definitions lead to different moduli spaces.

 In the case of 
signature $(m,0)$ one simply demands that $\co_\kk$ acts on $\Lie(A)$ through the 
structure map $\co_\kk\to \co_S$, and this gives an unambiguous definition of 
$\mathcal{M}_{(m,0)}$.    The resulting stack $\mathcal{M}_{(m,0)}$ has a very
simple structure (Proposition \ref{Prop:zero etale}): it is proper and smooth over 
$\co_\kk$ of relative dimension $0$. In particular its complex fiber is a $0$-dimensional complex orbifold.

In the case of signature $(m,1)$ there are at least  three 
competing definitions of the signature condition: a \emph{naive} definition, and more
refined definitions introduced by Pappas \cite{pappas00} and Kr\"amer \cite{kramer}.  
These three definitions, recalled in Section \ref{ss:good models},  
lead to three  stacks over $\co_\kk$, related by canonical morphisms
\begin{equation}\label{intro models}
\mathcal{M}_{(m,1)} \to \mathcal{M}^\pappas_{(m,1)} \to \mathcal{M}^\naive_{(m,1)}
\end{equation}
which become isomorphisms after restriction to $\co_\kk[1/d_\kk]$.  The  naive model
$\mathcal{M}^\naive_{(m,1)}$ is known, by work of Pappas, to be neither flat nor regular.  The refined model 
$ \mathcal{M}^\pappas_{(m,1)}$ of Pappas  is flat but not regular.   Kr\"amer's model $\mathcal{M}_{(m,1)}$ 
is both flat and regular.    It is essential here that $d_\kk$ be odd.  While the definitions of the stacks in (\ref{intro models})
make sense also for even $d_\kk$, there seems to be no reason to expect in this generality  that 
$\mathcal{M}_{(m,1)}$ is regular.

If  $m>0$ then $\mathcal{M}_{(m,1)}$ is typically not proper,
and our first main result (Theorem \ref{Thm:compactify})
is the existence of a canonical toroidal compactification of Kr\"amer's model.

\begin{BigThm}\label{Big A}
Fix $m\ge 0$. There is an open immersion  $\mathcal{M}_{(m,1)} \to \mathcal{M}^*_{(m,1)}$ with 
dense image having the following properties.
The stack $\mathcal{M}^*_{(m,1)}$ is regular, is proper and flat over $\co_\kk$,   and is smooth over 
$\co_\kk[1/d_\kk]$.  The boundary
\[
\partial\mathcal{M}^*_{(m,1)} = \mathcal{M}^*_{(m,1)} \smallsetminus \mathcal{M}_{(m,1)}
\] 
is a divisor, and is smooth and proper over $\co_\kk$. 
\end{BigThm}

One can say much more about the boundary divisor $\partial\mathcal{M}^*_{(m,1)}$.  For example, the universal 
abelian scheme $A$ over $\mathcal{M}_{(m,1)}$ extends to a semi-abelian scheme $G$
over $\mathcal{M}^*_{(m,1)}$.  At a  geometric point $\Spec(\F) \to \partial\mathcal{M}^*_{(m,1)}$ of the 
boundary the fiber $G_{/\F}$ sits in an exact sequence
\begin{equation}\label{intro extension}
0 \to T \to G_{/\F} \to B\to 0
\end{equation}
where $T$ is a  torus over $\F$ whose character group is a projective 
$\co_\kk$-module of rank one, and $B\in \mathcal{M}_{(m-1,0)}(\F)$.  As the geometric point varies over a 
single irreducible component of $\partial\mathcal{M}^*_{(m,1)/\F}$, the torus $T$ and the abelian variety $B$
are constant, but the isomorphism class of the extension (\ref{intro extension}) varies.

The proof of Theorem \ref{Big A} follows the  methods of Chai-Faltings \cite{faltings-chai} and  Lan \cite{kai-wen}.  
Indeed, if one works over $\co_\kk[1/d_\kk]$ instead of $\co_\kk$
 then Theorem \ref{Big A} is a special  case of Lan's results.


\subsection{Arithmetic Kudla-Rapoport divisors}


Our motivation for constructing a good 
integral model $\mathcal{M}^*_{(m,1)}$ over $\co_\kk$ is to have 
a suitable space on which to do arithmetic intersection theory.  In particular, 
we wish to address some the speculative questions raised in the introduction 
of \cite{howardCM} concerning the arithmetic intersections of \emph{Kudla-Rapoport}
divisors with cycles of complex multiplication points.  The set up is as follows.  
Fix an integer $n\ge 1$ and consider the $\co_\kk$-stack
\[
\mathcal{M}^* = \mathcal{M}_{(1,0)} \times_{\co_\kk} \mathcal{M}^*_{(n-1,1)}.
\]
It is regular of dimension $n$,  is flat and proper over $\co_\kk$, and contains
\[
\mathcal{M} = \mathcal{M}_{(1,0)} \times_{\co_\kk} \mathcal{M}_{(n-1,1)}
\]
as a dense open substack. Set
\[
\partial\mathcal{M}^* = \mathcal{M}^* \smallsetminus \mathcal{M} 
\]
so that $\partial\mathcal{M}^*\iso \mathcal{M}_{(1,0)} \times_{\co_\kk} \partial\mathcal{M}^*_{(n-1,1)}$.

Following work of Kudla and Rapoport \cite{KRunitaryI, KRunitaryII}, we define a family of 
divisors $\mathcal{Z}(m)$ on $\mathcal{M}$ indexed by nonzero integers $m$.  
 Let $\mathcal{Z}^*(m)$ be the Zariski closure of $\mathcal{Z}(m)$ in $\mathcal{M}^*$.  
In \cite{howardCM}  one finds the construction of a Green function 
$\Gr(m,v)$ on $\mathcal{M}(\C)$ for $\mathcal{Z}(m)$, which depends on an auxiliary choice 
of positive real parameter $v$.   In Section \ref{ss:arith KR} we define, 
for each geometric component $\mathcal{C}$ of the boundary $\partial\mathcal{M}^*$, an integer
$\mathrm{Ind}_\mathcal{C}(m)$ in such a way that the formal sum
\[
\mathcal{B}(m,v) = \frac{1}{4\pi v} \sum_\mathcal{C} \mathrm{Ind}_\mathcal{C}(m) \cdot \mathcal{C}
\]
is a divisor on $\mathcal{M}^*$ with real coefficients.
Our second main result  amounts to an examination of these Green functions near the
boundary of $\mathcal{M}^*$.  We prove  that
each $\Gr(m,v)$ has logarithmic singularities along certain boundary components, so 
that $\Gr(m,v)$, when viewed as a function on $\mathcal{M}^*(\C)$, is a Green function 
for the divisor $\mathcal{Z}^*(m) + \mathcal{B}(m,v)$.  Here the term \emph{Green function}
must be interpreted as in the work of Burgos-Kramer-K\"uhn \cite{BKK};
$\Gr(m,v)$ is a Green function   \emph{up to log-log error terms} along the boundary.  The main results of   \cite{BKK} show that the theory of 
arithmetic Chow groups developed by Gillet-Soul\'e \cite{gillet-soule90,soule92} can be extended to 
allow for  such log-log error terms.   The following is  Theorem \ref{Thm:good green}
of the text.

\begin{BigThm}\label{Big B}
For any $m\not=0$ and any $v\in \R^+$, the pair
\[
 \widehat{\mathcal{Z}}(m,v) = \big(\mathcal{Z}^*(m) + \mathcal{B}(m,v) , \Gr(m,v)\big)
\] 
defines a class
in the Burgos-Kramer-K\"uhn codimension one arithmetic Chow group 
$\widehat{\mathrm{CH}}_\R^1(\mathcal{M}^*)$ of Section \ref{ss:BKK}.
\end{BigThm}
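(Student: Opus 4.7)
The plan is to verify the two ingredients required by the BKK formalism of Section \ref{ss:BKK}: first, that $\KR^*(m)+\mathtt{B}(m,v)$ is a well-defined $\R$-divisor on $\mathtt{M}^*$ (immediate from the construction of $\mathrm{Ind}_\mathtt{B}(m)$ in Section \ref{ss:arith KR}), and second, that $\Gr(m,v)$ is a BKK Green function for this divisor. The latter means that $\Gr(m,v)$ is smooth on the complement of $(\KR^*(m)\cup\mathtt{Z})(\C)$, has logarithmic singularities along $\KR^*(m)+\mathtt{B}(m,v)$ with the prescribed multiplicities, and is log-log along the boundary $\mathtt{Z}(\C)$ in the sense of \cite{BKK}. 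The substantive content therefore concentrates in the analysis of $\Gr(m,v)$ near $\mathtt{Z}(\C)$.

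Restricted to $\mathtt{M}(\C)$, the function $\Gr(m,v)$ is already known from \cite{howardCM} to be a classical Green function for $\KR(m)$ with logarithmic singularities along its complex divisor. Since the BKK condition is local on $\mathtt{M}^*(\C)$, the remaining work can be carried out one boundary component at a time. Fix a geometric irreducible component $\mathtt{B}$ of $\mathtt{Z}$. Using the description of the semi-abelian degeneration sketched after Theorem \ref{Big A}, together with the theory of toroidal compactifications, a small analytic neighborhood of a generic point of $\mathtt{B}(\C)$ in $\mathtt{M}^*(\C)$ is uniformized by the product of a disk in a variable $q$, with $\{q=0\}$ cutting out $\mathtt{B}$, and a chart on a complex abelian variety of dimension $m-1$ attached to a point of $\mathtt{M}_{(m-1,0)}(\C)$. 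In these coordinates $\Gr(m,v)$, being built by a Kudla-style regularization of a theta integral on a hermitian space $V$, can be expanded as a Fourier series in the coordinate dual to the character lattice of the torus $T$ appearing in (\ref{intro extension}).

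The main obstacle is to isolate the singular behavior of this Fourier expansion. The strategy is to split the sum defining $\Gr(m,v)$ according to how lattice vectors of hermitian norm $m$ interact with the rational isotropic line in $V$ singled out by the boundary stratum $\mathtt{B}$. Vectors not lying in the perpendicular of that isotropic line contribute only smooth terms and log-log error terms to the expansion near $q=0$. The subsum over vectors in the perpendicular splits further into a classical Green function on $\mathtt{B}(\C)$ for the restriction of $\KR(m)$ plus a term proportional to $\log|q|^2$; the coefficient of $\log|q|^2$ is then computed in terms of the cusp data and matched with $\mathrm{Ind}_\mathtt{B}(m)/(4\pi v)$. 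This matching, together with the estimate that all remaining terms are log-log, is the delicate step, since both the combinatorics of the boundary stratification and sharp asymptotics of the special functions entering Kudla's construction are required.

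With these local asymptotics in hand at every boundary component, $\Gr(m,v)$ satisfies the defining conditions of a BKK Green function for $\KR^*(m)+\mathtt{B}(m,v)$ relative to the normal crossings boundary $\mathtt{Z}$, and hence $(\KR^*(m)+\mathtt{B}(m,v),\Gr(m,v))$ represents a class in $\widehat{\mathrm{CH}}_\R^1(\mathtt{M}^*)$.
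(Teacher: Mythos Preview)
Your proposal is correct and follows essentially the same approach as the paper. The paper carries out exactly the split you describe---writing $\Gr(m,v,\hh)=\Gr^{\bndry}(m,v,\hh)+\Gr^{\interior}(m,v,\hh)$ according to whether lattice vectors lie in $\mathfrak{a}^\perp$ or not---and proves the required estimates for each piece separately (Propositions~\ref{Prop:growth I} and~\ref{Prop:main growth}), assembling them in Theorem~\ref{Thm:good green}. One small imprecision: the $\mathfrak{a}^\perp$-subsum does not quite ``split into a classical Green function on $\mathtt{B}(\C)$ plus $\log|q|^2$''; rather, after subtracting $\log|\loc_m|^2$ for the Zariski closure $\KR^*(m)$ (which itself meets the boundary) and the correct multiple of $\coord(\hh)\sim\log|q|^{-2}$, one is left with a bounded error whose derivatives have log-log growth---the paper extracts the coefficient via an Euler--Maclaurin comparison of the lattice sum over $\delta_\kk\overline{\mathfrak{a}}_0$ with the corresponding integral.
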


\begin{Rem}
If $m<0$ then both $\mathcal{Z}^*(m)$ and  $\mathcal{B}(m,v)$ are zero, but the 
class $ \widehat{\mathcal{Z}}(m,v)$ may still be nontrivial.
\end{Rem}

A  \emph{Hermitian lattice} is a pair 
$(\Lambda,h_\Lambda)$ in which $\Lambda$
is a finitely generated projective $\co_\kk$-module, and $h_\Lambda$ is an $\co_\kk$-valued
Hermitian form on $\Lambda$.  The Hermitian lattice $(\Lambda,h_\Lambda)$ is 
\emph{self dual} if the map $\Lambda \to \Hom_{\co_\kk}(\Lambda,\co_\kk)$ 
defined by $y\mapsto h_\Lambda(\cdot, y)$ is a bijection.

The integers $\mathrm{Ind}_\mathcal{C}(m)$ admit an elegant description in 
terms of Hermitian lattices.  
The complex orbifold $\mathcal{M}^*(\C)$ is disconnected.  Its connected components
are indexed by isomorphism classes of pairs $(\mathfrak{A}_0,\mathfrak{A})$ of 
self dual Hermitian lattices, where $\mathfrak{A}_0$ has signature $(1,0)$ and $\mathfrak{A}$
has signature $(n-1,1)$.  The $\mathfrak{A}_0$'s index the connected components of 
$\mathcal{M}_{(1,0)}(\C)$, while the $\mathfrak{A}$'s index the connected components of
$\mathcal{M}^*_{(n-1,1)}(\C)$.   The boundary components of $\mathcal{M}^*(\C)$ are indexed
by isomorphism classes of triples $(\mathfrak{A}_0, \mathfrak{m} \subset \mathfrak{A})$
where $\mathfrak{A}_0$ and $\mathfrak{A}$ are as above, and $\mathfrak{m}\subset\mathfrak{A}$
is an isotropic $\co_\kk$-module direct summand of rank one.  Suppose $\mathcal{C}$ is the  boundary component
of $\mathcal{M}^*(\C)$ corresponding to the triple $(\mathfrak{A}_0, \mathfrak{m} \subset \mathfrak{A})$.  
The $\co_\kk$-module $\Hom_{\co_\kk}(\mathfrak{A}_0,\mathfrak{A})$
is a self dual Hermitian lattice  of signature $(n-1,1)$,  and contains
\[
\mathfrak{a} = \Hom_{\co_\kk}(\mathfrak{A}_0,\mathfrak{m})
\]
as an isotropic direct summand.   The quotient 
$\Lambda=\mathfrak{a}^\perp/\mathfrak{a}$ is a self dual Hermitian lattice of 
signature $(n-2,0)$, and 
\[
\mathrm{Ind}_\mathcal{C}(m) = \# \{ x\in\Lambda : \langle x,x\rangle =m \}.
\]

The closest result to Theorem \ref{Big B} in the literature is found in work of Bruinier-Burgos-K\"uhn
\cite{bruinier-burgos-kuhn} on Hilbert modular surfaces.  
There are   two natural constructions 
of Green functions for the Hirzebruch-Zagier divisors on a Hilbert modular surface $\mathcal{Y}$: 
the \emph{automorphic} Green functions of Bruinier  \cite{bruinier99}, and the \emph{Kudla-style} Green functions 
as constructed in \cite{HY-HZ} following the ideas of  \cite{kudla97}.
One of  the  the main results of \cite{bruinier-burgos-kuhn} is a Hilbert modular surface analogue of 
Theorem \ref{Big B}:  the  automorphic  Green function for a Hirzebruch-Zagier divisor on $\mathcal{Y}$  extends to  a Green function  on 
 a toroidal compactification $\mathcal{Y}^*$,  provided one adds to the divisor a certain linear combinations of 
 boundary components.   However, the analogous result for the Kudla-style Green functions on $\mathcal{Y}$ is false, 
 as recently shown by Berndt-K\"uhn \cite{BK}.
 The Green function appearing in Theorem \ref{Big B} is constructed in the same way as the
Kudla-style Green function of \cite{HY-HZ}, and thus Theorem \ref{Big B} may be somewhat surprising.  
Of course,  the compactification
$\mathcal{M} \hookrightarrow \mathcal{M}^*$ is much nicer than the toroidal compactification
$\mathcal{Y} \hookrightarrow \mathcal{Y}^*$ (for example, the boundary of $\mathcal{Y}^*$ is not smooth), 
and so one should not be too surprised.


\subsection{Intersections with CM cycles}


Having constructed arithmetic cycle classes
\[
\widehat{\mathcal{Z}} (m,v) \in \widehat{\mathrm{CH}}_\R^1(\mathcal{M}^*)
\]
we may complete a small part of the speculative program laid out in the introduction 
of \cite{howardCM}.
 Let $F$ be a totally real number field with $[F:\Q]=n$ (the same $n$
used in the definition of $\mathcal{M}^*$), and define a 
CM field $K=\kk\otimes_\Q F$.  Assume that the discriminant of $F$ is odd and coprime to $d_\kk$.
For a suitable choice of CM type $\Phi$ of $K$, we define in 
Section \ref{ss:cm cycles} a cycle $\mathcal{X}_\Phi$ of dimension one on $\mathcal{M}^*_{/\co_\Phi}$,
where $\co_\Phi$ is the ring of integers in a particular finite extension $K_\Phi/\Q$.
If $n>2$ then $K_\Phi$ is the reflex field of $(K,\Phi)$.  The cycle $\mathcal{X}_\Phi$  is 
essentially the cycle of points with complex multiplication by $\co_K$ and CM type $\Phi$.
Associated to $\mathcal{X}_\Phi$  is a canonical linear functional
\[
 \widehat{\mathrm{CH}}_\R^1(\mathcal{M}^*) \to \R
\]
denoted  $\widehat{\mathcal{D}} \mapsto [ \widehat{\mathcal{D}} : \mathcal{X}_\Phi ]$
and called \emph{arithmetic intersection against  $\mathcal{X}_\Phi$}.

The main results of \cite{howardCM} consist of calculations of the intersection 
multiplicity of naive versions of $\mathcal{X}_\Phi$ and $\mathcal{Z}(m)$ on the 
(non-compact, non-regular, and non-flat) Shimura variety 
$\mathcal{M}_{(1,0)} \times_{\co_\kk} \mathcal{M}^\naive_{(n-1,1)}$.
By reducing to the calculations of \cite{howardCM}, we are able to prove in Section \ref{ss:final results}
a precise  formula for $[ \widehat{\mathcal{Z}} (m,v)  : \mathcal{X}_\Phi ]$, and show that
this value is related to the Fourier coefficients of Eisenstein series.

More precisely, let $\mathcal{E}_\Phi(\tau,s)$ be the Hilbert modular Eisenstein series 
of weight one of \cite[Definition 4.1.1]{howardCM}.  It is a nonholomorphic function of the variable 
$\tau \in \mathcal{H}^n$  in the product  of complex upper half planes indexed by the $n$ 
embeddings $F\to \R$, 
and a meromorphic function of the complex variable $s$, vanishing at the center $s=0$
of its functional equation.  
If we pull back by the diagonal embedding $i_F :\mathcal{H} \to \mathcal{H}^n$ and take the 
central derivative, we obtain a nonholomorphic modular form
\[
\mathcal{E}_\Phi '( i_F(\tau) , 0) = \sum_{m\in \Z}  c_\Phi(m,v) \cdot q^m
\]
of the variable $\tau=u+iv \in \mathcal{H}$, where $q=e^{2\pi i \tau}$ as usual.  
From the calculations of \cite{howardCM}  we deduce the following result 
(Theorem \ref{Thm:main intersection} of the text), which is in accordance with 
the general yoga of Kudla's program \cite{kudla04b} predicting relations between
arithmetic intersections and Fourier coefficients of Eisenstein series.

\begin{BigThm}\label{Big C}
For any nonzero $m\in \Z$, and any $v\in \R^+$,
\[
[ \widehat{\mathcal{Z}}(m,v)  :  \mathcal{X}_\Phi ]  = -  \frac{ h(\kk)}{   w(\kk)  } 
 \frac{ \sqrt{\mathrm{N}( d_{ K / F} ) }  }{2^{r-1} } 
 \cdot c_\Phi(m,v).
\]
Here $h(\kk)$ is the class number of $\kk$, $w(\kk)$ is the number of roots of unity in $\kk^\times$,
$d_{K/F}$ is the discriminant of $K/F$, and 
$r$ is the number of primes of $F$ ramified in $K$, including the archimedean primes.
\end{BigThm}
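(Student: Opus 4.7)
The plan is to decompose $[\widehat{\KR}(m,v) : \mathtt{X}_\Phi]$ into finite and archimedean contributions, to identify each with the analogous quantity computed on the naive model in \cite{howardCM}, and to recognize the resulting sum as the product of $c_\Phi(m,v)$ with the explicit constant in the statement.

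First, I would unwind the Burgos--Kramer--K\"uhn arithmetic intersection pairing of Section \ref{ss:BKK} to write
\[
[\widehat{\KR}(m,v) : \mathtt{X}_\Phi] = [\KR^*(m) \cdot \mathtt{X}_\Phi]_{\mathrm{fin}} + [\mathtt{B}(m,v) \cdot \mathtt{X}_\Phi]_{\mathrm{fin}} + \frac{1}{2} \int_{\mathtt{X}_\Phi(\C)} \Gr(m,v),
\]
where the first two terms are geometric intersection numbers on $\mathtt{M}^*_{/\co_\Phi}$, and the third integral converges thanks to the log-log control on $\Gr(m,v)$ along the boundary provided by Theorem \ref{Big B}.

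Second, the cycle $\mathtt{X}_\Phi$ parameterizes abelian schemes with $\co_K$-multiplication of CM type $\Phi$, and such abelian schemes have potentially good reduction everywhere by the N\'eron--Ogg--Shafarevich criterion. Consequently the universal object extends to an honest abelian scheme, not merely a semi-abelian scheme, over $\mathtt{X}_\Phi$, and so the image of $\mathtt{X}_\Phi$ in $\mathtt{M}^*$ is disjoint from the boundary divisor $\mathtt{Z}$. In particular the boundary contribution $[\mathtt{B}(m,v) \cdot \mathtt{X}_\Phi]_{\mathrm{fin}}$ vanishes identically.

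Third, I would show that the remaining finite and archimedean terms may be computed after passing from Kr\"amer's model to the naive model. At primes not dividing $d_\kk$ the morphism $\mathtt{M}^\kramer_{(n-1,1)} \to \mathtt{M}^\naive_{(n-1,1)}$ of (\ref{intro models}) is already an isomorphism. At primes $\mathfrak{p} \mid d_\kk$ the cycle $\mathtt{X}_\Phi$ still factors through the common smooth open of the three stacks in (\ref{intro models}), because the Lie algebra of a CM abelian variety with $\co_K$-multiplication admits a canonical decomposition into lines indexed by $\Phi$, which automatically upgrades the naive signature condition to Pappas' and to Kr\"amer's refinements. It follows that the local geometric intersection multiplicity of $\KR^*(m)$ with $\mathtt{X}_\Phi$ at $\mathfrak{p}$ equals the corresponding multiplicity on $\mathtt{M}_{(1,0)} \times \mathtt{M}^\naive_{(n-1,1)}$, and the archimedean integral coincides with its naive-model counterpart since all three models share a common generic fiber. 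Both quantities are precisely what is computed in \cite{howardCM}.

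Finally, the main theorems of \cite{howardCM} identify this sum of local contributions with the $m$-th Fourier coefficient of $\mathcal{E}_\Phi'(i_F(\tau),0)$, the overall normalization $-h(\kk) w(\kk)^{-1} \sqrt{\mathrm{N}(d_{K/F})} \, 2^{1-r}$ emerging from the weighted degree of $\mathtt{X}_\Phi$ over $\co_\Phi$ together with the normalization of the Hilbert Eisenstein series. The main obstacle is the third step: the local comparison at primes dividing $d_\kk$ that allows one to reuse the calculations of \cite{howardCM}. Once one verifies that the CM cycle avoids the loci where Kr\"amer's model differs from the naive one, and that the local intersection multiplicities are therefore insensitive to the choice of model, the theorem follows by direct transport of the formulas already established for the naive model.
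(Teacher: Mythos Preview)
Your overall strategy matches the paper's: decompose into finite and archimedean contributions, observe that the boundary term vanishes because the CM cycle is proper over $\co_\kk$ and hence lands in $\mathtt{M}$, and then reduce both remaining terms to the computations already carried out in \cite{howardCM} on the naive model. The archimedean step is fine (all three models share a generic fiber), and Step~2 is exactly how the paper handles it.

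The gap is in Step~3. Your justification that $\mathtt{X}_\Phi$ factors through the common smooth locus of the three models, namely that ``the Lie algebra of a CM abelian variety with $\co_K$-multiplication admits a canonical decomposition into lines indexed by $\Phi$'', fails precisely at primes dividing $d_\kk$: the idempotents cutting out those lines live in $\co_K\otimes_\Z\co_\Phi[1/d_\kk]$, not in $\co_K\otimes_\Z\co_\Phi$. Indeed the paper needs the nontrivial Lemma~\ref{cm lift map} just to produce a Kr\"amer filtration on the CM Lie algebra, and there is no reason to expect the resulting point to avoid $\mathrm{Sing}$ (compare Theorem~\ref{Thm:KP}(3) and Remark~\ref{naive wedge}). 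So the claim that the local intersection multiplicities are ``insensitive to the choice of model'' because the models agree near $\mathtt{X}_\Phi$ is unsupported.

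The paper's route around this is different and worth internalizing. First, Proposition~\ref{Prop:local divisor} shows that $\KR(m)$ is locally cut out by a single equation in the regular stack $\mathtt{M}$, hence is Cohen--Macaulay; $\mathtt{X}_\Phi$ is smooth over $\co_\Phi$, hence also Cohen--Macaulay. Serre's theorem then kills all higher $\mathrm{Tor}$ terms in the intersection multiplicity, so $I_\mathrm{fin}(\KR^*(m):\mathtt{X}_\Phi)$ is simply a sum of lengths of local rings of the fiber product $\KR(m)\times_{\mathtt{M}}\mathtt{X}_\Phi$. Second, the very definition $\KR(m)=\KR^\naive(m)\times_{\mathtt{M}^\naive}\mathtt{M}$ gives a categorical identity
\[
\KR(m)\times_{\mathtt{M}}\mathtt{X}_\Phi \;\cong\; \KR^\naive(m)\times_{\mathtt{M}^\naive}\mathtt{X}_\Phi,
\]
and the right-hand side is exactly the object whose lengths were computed in \cite{howardCM}. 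No claim about $\mathtt{X}_\Phi$ avoiding the singular locus is needed. (Two minor points: the archimedean term is a finite sum over $\mathtt{X}_\Phi(\C)$ rather than an integral with a factor of $\tfrac{1}{2}$, and the proper-intersection hypothesis, hence formula~(\ref{proper degree}), requires $F$ to be a field, which you should state.)
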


Of course the right hand side of the equality of Theorem \ref{Big C} is defined
even for $m=0$, and  it is natural to ask if the result can be extended to $m=0$.
The problem is finding the correct definition of $\widehat{\mathcal{Z}}(0,v)$.  For example, it is 
reasonable to conjecture that there is a (necessarily unique) choice of 
$\widehat{\mathcal{Z}}(0,v)$ for which the formal generating series
\[
\sum_{ m\in \Z} \widehat{\mathcal{Z}}(m,v) \cdot q^m \in \widehat{\mathrm{CH}}^1_\R(\mathcal{M}^*) ((q))
\]
(here $q=e^{2\pi i \tau}$ and $\tau=u+i v$) is a nonholomorphic modular form,
and that Theorem \ref{Big C} holds at $m=0$ for this choice.


\subsection{Notation}
\label{ss:notation}


Throughout the article $\kk$, as above, is a quadratic imaginary field of 
odd discriminant $-d_\kk$ with a chosen embedding $\iota:\kk\to \C$.
Let  $\delta_\kk\in \kk$  be the element determined by $\iota(\delta_\kk) = i\cdot \sqrt{d_\kk}$,
where $\sqrt{d_\kk}$ is the positive real square root of $d_\kk$.  Let $\mu(\kk)$ be the group of roots of 
unity in $\kk$.

Schemes are always  assumed to be locally Noetherian and separated, 
and \emph{stack} means  locally Noetherian and separated  Deligne-Mumford stack.

If $A\to S$ is an abelian scheme over an arbitrary base scheme, equipped with 
an action $\kappa:\co_\kk\to \End(A)$, there is an induced action $x\mapsto  \kappa(\overline{x})^\vee$
on the dual abelian scheme $A^\vee$.

 \subsection{Acknowledgements}

The author thanks Kai-Wen Lan for helpful correspondence.  Portions of this research 
were conducted at the  Fields Institute during the Spring of 2012, and the author thanks the 
Institute for its hospitality.


\section{Integral models of unitary Shimura varieties}
\label{s:integral models}


In this section we prove Theorem \ref{Big A} of the introduction.  The proof uses the 
same machinery as \cite{faltings-chai, kai-wen, stroh}, and  we omit those details that 
are adequately documented in the literature.

We use the following notation throughout Section \ref{s:integral models}.
 If $S$ is any  $\co_\kk$-scheme, denote by $i_S:\co_\kk\to \co_S$ the structure morphism.
Define two ideals  $\bm{J},\bm{J}'\subset \co_\kk\otimes_\Z\co_S$ by
\begin{align*}
\bm{J} &= \mathrm{ker} \big(  \co_\kk\otimes_\Z\co_S \map{x\otimes y\mapsto i_S(x) \cdot y }\co_S \big) \\
\bm{J}' &= \mathrm{ker}\big(  \co_\kk\otimes_\Z\co_S \map{x\otimes y\mapsto i_S(\overline{x}) \cdot y }\co_S \big).
\end{align*}
Each is a  locally free  $\co_S$-module of rank one. In fact, if we choose any 
$\pi\in \co_\kk$ for which $\co_\kk=\Z\oplus \Z\pi$, then $\bm{J}$ and $\bm{J}'$
are generated as $\co_S$-modules by 
\[
\bm{j} =   \pi \otimes 1-1\otimes i_S(\pi) 
\qquad
\mbox{and}
\qquad
\bm{j}' =    \pi  \otimes 1-1\otimes i_S(\overline{\pi}) ,
\]
respectively, and the sequence \begin{equation}\label{period exact}
\cdots \map{\bm{j} }   \co_\kk\otimes_\Z\co_S
\map{\bm{j}'}  \co_\kk\otimes_\Z\co_S \map{\bm{j}}  \co_\kk\otimes_\Z\co_S \map{\bm{j}'} \cdots.
\end{equation}
is exact.


\subsection{The work of Pappas and Kr\"amer}
\label{ss:good models}


Fix nonnegative integers $r$ and $s$, and  let 
$\mathcal{M}_{(r,s)}^\naive$ be the moduli stack of  triples  $(A,\kappa,\psi)$, in which 
 \begin{itemize}
\item
$A\to S$ is an abelian scheme   over an $\co_\kk$-scheme $S$,
\item
$\kappa: \co_\kk \to \End(A)$ is an action of $\co_\kk$ on $A$ satisfying the 
\emph{signature $(r,s)$ condition}:   locally on $S$, 
$\kappa(x)$ acts on the  $\co_S$-module $\Lie(A)$ with  characteristic polynomial 
\[
(T-i_S(x))^r(T- i_S(\overline{x}))^s \in \co_S[T]
\]
for every $x\in \co_\kk$,
\item
$\psi :A \to A^\vee$ is an $\co_\kk$-linear principal polarization.
\end{itemize}
The  $\co_\kk$-stack $\mathcal{M}_{(r,s)}^\naive$ is smooth over $\co_\kk[1/d_\kk]$
of relative dimension $rs$, but   at primes dividing $d_\kk$ it  need be neither  flat nor regular.
To remedy this, Pappas \cite{pappas00} defines  
\[
\mathcal{M}_{(r,s)}^\pappas \hookrightarrow \mathcal{M}_{(r,s)}^\naive
\]
as the closed substack of triples $(A,\kappa,\psi) \in \mathcal{M}_{(r,s)}^\naive(S)$
satisfying  \emph{Pappas's wedge conditions}:  the endomorphisms
\[
\wedge^{s+1} \bm{j} : \wedge^{s+1} \Lie(A) \to  \wedge^{s+1} \Lie(A) 
\]
and 
\[
\wedge^{r+1} \bm{j}' : \wedge^{r+1} \Lie(A) \to  \wedge^{r+1} \Lie(A) 
\]
are trivial.

\begin{Rem}\label{naive wedge}
Suppose   $S=\Spec(\F)$ for a field  $\F$, so that any  finitely generated  
$\co_\kk\otimes_\Z \F$-module  is a direct sum of copies of $\co_\kk\otimes_\Z \F$, 
$(\co_\kk\otimes_\Z \F)/\bm{J}$,  and   $(\co_\kk\otimes_\Z \F)/\bm{J}'$.
If $\mathrm{char}(\F) \nmid d_\kk$ then  Pappas's conditions are equivalent to 
\[
\Lie(A) \iso 
\big( (\co_\kk\otimes_\Z \F)/\bm{J}\big)^r  \oplus \big( (\co_\kk\otimes_\Z \F)/\bm{J}'\big)^s.
\]
If $\mathrm{char}(\F)\mid d_\kk$ then $\bm{J}=\bm{J}'$, and Pappas's conditions are equivalent to 
\[
\Lie(A) \iso 
  (\co_\kk\otimes_\Z \F)^a \oplus \big(   (\co_\kk\otimes_\Z \F)/\bm{J}  \big)^b
\]
for some $a\le \mathrm{min}\{ r,s\}$  and $b=r+s-2a$.
\end{Rem}

Fix an integer $m\ge 1$.   In the easy case of signature $(m,0)$
Pappas's wedge conditions are equivalent 
to the single condition $\bm{j}\cdot \Lie(A)=0$, which is equivalent to
$\co_\kk$ acting  on  $\Lie(A)$  through the  structure morphism $i_S:\co_\kk\to \co_S$.  
Proposition \ref{Prop:zero etale} below shows that the stack $\mathcal{M}_{(m,0)}^\pappas$ 
has all the nice properties one could hope for, and hence from now on we will abbreviate
\[
\mathcal{M}_{(m,0)}= \mathcal{M}_{(m,0)}^\pappas.
\]

The category  $\mathcal{M}_{(m,0)}(\C)$ is  easy to describe.
There are only  finitely
many isomorphism classes of objects, and they are in bijection with the isomorphism classes of
self dual Hermitian lattices  $(\mathfrak{B},h_\mathfrak{B})$  of signature $(m,0)$.  The bijection
identifies the pair $(\mathfrak{B},h_\mathfrak{B})$ with the complex torus 
\[
B(\C)=(\mathfrak{B} \otimes_{\co_\kk} \C) /\mathfrak{B}
\]
equipped with its obvious action of $\co_\kk$, and with the polarization induced by 
the perfect symplectic form
 $\psi :  \mathfrak{B}\otimes_\Z\mathfrak{B} \to \Z$ defined by 
\begin{equation}\label{herm to pol}
   \psi(v,w) =  \frac{ h_\mathfrak{B}(v,w) -h_\mathfrak{B}(w,v) } {  \delta_\kk   }.
\end{equation}
Note that each such $B$ is isomorphic to a product of CM elliptic curves:
if we fix an $\co_\kk$-module isomorphism $\mathfrak{B} \iso \oplus \mathfrak{a}_i$,
where each $\mathfrak{a}_i$ is a fractional $\co_\kk$-ideal,  then
$B(\C) \iso \prod_i \C/\mathfrak{a}_i$.  This isomorphism need not 
identify the polarization on the left with the product polarization on the right.

\begin{Prop}\label{Prop:zero etale}
The morphism $\mathcal{M}_{(m,0)} \to \Spec(\co_\kk)$ is proper and smooth  of relative 
dimension $0$.
\end{Prop}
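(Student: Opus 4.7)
The plan is to establish that $\mathtt{M}_{(m,0)}\to\Spec(\co_\kk)$ is formally \'etale (giving smoothness of relative dimension $0$) and then to verify the valuative criterion of properness using the CM nature of the moduli problem.

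For formal \'etaleness, let $S_0\hookrightarrow S$ be a square-zero closed immersion of $\co_\kk$-schemes with ideal $I\subset \co_S$ satisfying $I^2=0$, and fix $(A_0,\kappa_0,\pol_0)\in \mathtt{M}_{(m,0)}(S_0)$; I would show this lifts uniquely to $\mathtt{M}_{(m,0)}(S)$. By Grothendieck-Messing crystalline deformation theory, lifts of $A_0$ to an abelian scheme $A/S$ correspond bijectively to $\co_S$-direct summand lifts of the Hodge filtration $\omega_{A_0}\subset H^1_{dR}(A_0/S_0)$ inside the evaluation $\mathbb{D}:= H^1_{dR}(A_0/S_0)\otimes_{\co_{S_0}}\co_S$ of the crystal, and compatibility with $\kappa_0$ requires the lift to be $\co_\kk\otimes_\Z\co_S$-stable. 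In the signature $(m,0)$ case the wedge conditions collapse to $\bm{j}\cdot \Lie(A)=0$, which means $\bm{j}\cdot \mathbb{D}\subseteq \omega$ for any lift $\omega$. Taking $\omega:=\bm{j}\cdot\mathbb{D}$ is thus the only candidate, and using the exact sequence (\ref{period exact}) tensored with $\mathbb{D}$ (which is locally free over $\co_\kk\otimes_\Z\co_S$ of rank $m$ by virtue of the signature condition on $A_0$), one sees directly that $\omega$ is a locally free $\co_S$-direct summand of rank $m$, reduces modulo $I$ to $\omega_{A_0}$, and has quotient on which $\co_\kk$ acts through $i_S$. The polarization lifts uniquely by standard rigidity for morphisms between lifts of abelian schemes. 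This yields formal \'etaleness.

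For properness I would apply the valuative criterion for Deligne-Mumford stacks: given a complete DVR $R$ with structure map $\Spec(R)\to\Spec(\co_\kk)$, fraction field $K$, and an object $(A_K,\kappa_K,\pol_K)\in\mathtt{M}_{(m,0)}(K)$, extend it uniquely to $\Spec(R)$. Uniqueness is standard separatedness of the moduli of polarized abelian schemes (Néron mapping property). For existence, I would show $A_K$ has good reduction over $R$. The signature $(m,0)$ condition together with the $\co_\kk$-action gives $A_K$ a full CM structure in the generalized sense: the $\ell$-adic Tate module is free of rank $m$ over $\co_\kk\otimes_\Z\Z_\ell$, the Mumford-Tate group is a torus, and the reflex field of the associated Shimura datum is $\kk$ itself (since the cocharacter imposed by signature $(m,0)$ is defined over $\kk$ via the fixed embedding $\iota$). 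Since $R$ is an $\co_\kk$-algebra, the classical theory of complex multiplication (Shimura-Taniyama, together with Serre-Tate for the $\ell$-adic input) then produces good reduction over $R$ directly, delivering the desired extension.

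The main obstacle is the good-reduction step in properness: one needs good reduction over $R$ itself rather than merely after a finite extension, and this hinges on the reflex field being $\kk$ so that the hypothesis that $R$ is an $\co_\kk$-algebra absorbs the field-of-definition issue. The formal \'etaleness step, by contrast, is relatively mechanical once one identifies the Hodge filtration with $\bm{j}\cdot\mathbb{D}$; the exact sequence (\ref{period exact}) furnishes a tool uniform across all characteristics, and handles in particular the primes above $d_\kk$ at which $\co_\kk\otimes_\Z\co_S$ fails to split as a product of two invertible ideals.
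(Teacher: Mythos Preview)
Your deformation-theoretic argument for formal \'etaleness is the same idea as the paper's: identify the Hodge filtration uniquely as $\bm{j}\cdot\mathbb{D}$ and invoke Grothendieck--Messing. One point you gloss over is the assertion that $\mathbb{D}$ is locally free of rank $m$ over $\co_\kk\otimes_\Z\co_S$ ``by virtue of the signature condition.'' This is not automatic from the signature condition on $\Lie(A_0)$ alone; the paper checks it at geometric points by comparison with Betti homology in characteristic $0$ and with the Dieudonn\'e module in characteristic $p$. Since formal \'etaleness can be checked on Artinian thickenings of geometric points, this suffices, but your phrasing over a general base $S$ leaves a gap.

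The properness argument has a genuine problem. First, a misconception: the valuative criterion for properness of a Deligne--Mumford stack allows passage to a finite extension of the fraction field, so you only need \emph{potentially} good reduction, not good reduction over $R$ itself. Your reflex-field argument does not in fact deliver good reduction over $R$: CM abelian varieties have potentially good reduction everywhere, but even an elliptic curve with CM by $\co_\kk$ defined over $\kk$ can have additive reduction at a ramified prime (it acquires good reduction only after a twist, i.e., after a finite extension). So your stated obstacle is both unnecessary and not actually overcome by the argument you sketch. Second, and more seriously, your invocation of Mumford--Tate groups and Shimura--Taniyama is a characteristic-zero toolkit; it says nothing when $R$ has equal characteristic $p$ (e.g., $R=\F_q[[t]]$). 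The paper treats the two cases separately: in mixed characteristic it reduces to $\C$ and uses that $A_{/\C}$ is isogenous to a product of CM elliptic curves (hence potentially good reduction), while in equal characteristic it bootstraps from the already-established \'etaleness to conclude that the object is defined over a finite extension of the prime field, which is then contained in $\co_F$. You would need to supply a separate argument for the equicharacteristic case.
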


\begin{proof}
Suppose that $z\in \mathcal{M}_{(m,0)}(\F)$ is a geometric point, and let 
$y \in \Spec(\co_\kk)(\F)$ be the geometric point below $z$.  
 To show that $\mathcal{M}_{(m,0)} \to \Spec(\co_\kk)$ is smooth of relative dimension $0$, it suffices to 
 prove that  the corresponding map $R_y\to R_z$ on completed  \'etale local rings  is an isomorphism.
 Let $(B_z,\kappa_z,\psi_z)$ be the triple over $\F$ corresponding to $z$.
The algebraic de Rham homology 
\[
H_1^\mathrm{dR}(B_z/\F) = \Hom( H^1_\mathrm{dR}(B_z/\F) ,\F)
\]
is free of rank  $m$ over $\co_\kk\otimes_\Z \F$.  If $\F$ has characteristic zero this is 
obvious by comparison with  Betti homology.  If $\F$ has characteristic $p>0$ one first checks that
the covariant Dieudonn\'e module $\mathbf{D}_z$ of $B_z$ is free of rank $m$ over 
 $\co_\kk\otimes_\Z W(\F)$, and then uses the canonical isomorphism 
 $\mathbf{D}_z\otimes_{W(\F)} \F \iso H_1^\mathrm{dR}(B_z/\F)$.

Recall the deformation theory of abelian schemes as in  \cite[Chapter 2]{kai-wen}
(which is essentially Grothendieck-Messing theory, but without any mention of divided powers).
Denote by $\mathrm{CLN}$ the category of complete local Noetherian $R_y$-algebras with 
residue field $\F$.  A \emph{square-zero thickening} is a surjection $\tilde{S}\to S$ in $\mathrm{CLN}$
whose kernel $I$ satisfies $I^2=0$.
If $B\to S$ is an abelian scheme then $B$ always admits (by \cite[IV.2.8]{messing72}) 
a deformation  $\tilde{B}\to \tilde{S}$ to any square zero thickening,
and the de Rham homology $D_B(\tilde{S}) = H_1^\mathrm{dR}(\tilde{B}/\tilde{S})$ 
is canonically independent of the choice of $\tilde{B}$.   Of course
\[
D_B(\tilde{S})  \otimes_{\tilde{S}} S \iso D_B(S).
\]
The induced Hodge filtration 
$\mathrm{Fil}\,H_1^\mathrm{dR}(\tilde{B}/\tilde{S}) \subset D_B(\tilde{S})$ \emph{does} 
depend on the choice of $\tilde{B}$.  The fundamental result is that
\[
\tilde{B} \mapsto \mathrm{Fil}\,H_1^\mathrm{dR}(\tilde{B}/\tilde{S}) 
\]
establishes a bijection between the set of deformations of $B$ to $\tilde{S}$ and the 
set of $\tilde{S}$-module direct summands of $D_B(\tilde{S})$ lifting the Hodge filtration 
\[
\mathrm{Fil}\, H_1^\mathrm{dR}(B/S)\subset D_B(S).
\]

Now apply the above discussion with $S=\F$ and $B=B_z$, and abbreviate $D=D_{B_z}$  so that
\begin{equation}\label{freeness}
D(\F) \iso H_1^\mathrm{dR}(B_z/\F) \iso ( \co_\kk\otimes_\Z \F)^m.
\end{equation}
From the Hodge short exact sequence
\[
0\to \mathrm{Fil}\, H_1^\mathrm{dR}(B/S)   \to D(\F) \to \Lie(B_z)\to 0
\]
 and $\bm{j}  \Lie(B_z)=0$,  it is clear that 
 $ \bm{j}  D(\F) \subset  \mathrm{Fil}\, H_1^\mathrm{dR}(B/S)$.  But both are
$\F$-module direct summmands  of $D(\F)$ of  rank $m$, and so 
 $ \bm{j} D(\F) =  \mathrm{Fil}\, H_1^\mathrm{dR}(B/S)$.

 Suppose $\tilde{S}\to \F$ is a square-zero thickening.
 The deformations of $(B_z,\kappa_z,\psi_z)$ to objects of $\mathcal{M}_{(m,0)}(\tilde{S})$ 
 are now  in bijection with the   $\co_\kk$-stable  $\tilde{S}$-module direct summands
 $\mathrm{Fil}\,\subset D(\tilde{S})$ of rank $m$ lifting $\bm{j} D(\F) \subset D(\F)$ that
 satisfy
 \[
 \bm{j}  ( D(\tilde{S})/  \mathrm{Fil})=0,
 \]
 and are isotropic for the symplectic form $\langle\cdot,\cdot\rangle$
 on $D(\tilde{S})$ induced by the polarization 
 $\psi_z$.  Using (\ref{freeness}), a Nakayama's lemma argument shows that
 $D(\tilde{S})\iso (\co_\kk\otimes_\Z \tilde{S})^m$, and it follows easily that 
 \[
 \mathrm{Fil}\,=\bm{j} D(\tilde{S})
 \]
 is the unique such direct summand (the isotropy condition is satisfied because 
 $\langle \bm{j}x,\bm{j}y\rangle = \langle \bm{j}'\bm{j} x,y\rangle$, and 
 $\bm{j}'\bm{j}=0$).   In other words, $(B_z,\kappa_z,\psi_z)$
 admits a unique deformation to an object of $\mathcal{M}_{(m,0)}(\tilde{S})$.

 Repeating the argument   through successive square-zero thickenings in $\mathrm{CLN}$ shows that
 $(B_z,\kappa_z,\psi_z)$ admits a unique deformation to every Artinian object $\tilde{S}$
 of $\mathrm{CLN}$.  Such deformations are classified by $\Hom_{R_y}(R_z,\tilde{S})$, which 
 therefore contains a single point.  Letting $\tilde{S}$ vary over all Artinian quotients of $R_y$ and 
 passing to the inverse limit, we find that $\mathrm{Hom}_{R_y}(R_z,R_y)$ contains 
 a unique element, which is easily seen to be the desired  inverse of $R_y\to R_z$.

The properness claim follows from the valuative criterion.
  \end{proof}

\begin{Rem}
Nowhere in the  proof of Proposition \ref{Prop:zero etale} did we use our permanent 
hypothesis that $d_\kk$ is odd.
\end{Rem}

Pappas proved that the  stack $\mathcal{M}_{(m,1)}^\pappas$  becomes 
regular after blowing up its nonsmooth locus.     Kr\"amer \cite{kramer} then gave a moduli 
interpretation of the blowup, which we now recall.  Define a  stack $\mathcal{M}_{(m,1)}$
as the moduli space of quadruples $(A,\kappa,\psi, \mathcal{F})$ over $\co_\kk$-schemes $S$
in which $(A,\kappa,\psi) \in \mathcal{M}_{(m,1)}^\naive(S)$, and $\mathcal{F}\subset\Lie(A)$
is an $\co_\kk$-stable $\co_S$-submodule  satisfying \emph{Kr\"amer's conditions}:
\begin{itemize}
\item
the quotient sheaf $\Lie(A)/\mathcal{F}$ is a locally free $\co_S$-module of rank one,
\item
the action of $\co_\kk$ on $\mathcal{F}$ is through the 
structure map $i_S:\co_\kk\to \co_S$, while the action  on $\Lie(A)/\mathcal{F}$ is through 
the complex conjugate of the structure map.  
\end{itemize}
The second condition can be rephrased succinctly as  $\bm{j} \mathcal{F}=0$
and $\bm{j}'  \Lie(A) \subset\mathcal{F}$.

The following theorem is a summary of some of the results of Pappas and Kr\"amer 
 on these moduli spaces.

\begin{Thm}[Kr\"amer \cite{kramer}, Pappas \cite{pappas00}]\label{Thm:KP}
The stacks   $\mathcal{M}_{(m,1) }^\pappas$ and 
$\mathcal{M}_{(m,1)}$ are  flat over $\co_\kk$ of relative dimension $m$,
and satisfy the following properties.
\begin{enumerate}
\item
The stack $\mathcal{M}_{(m,1)}$ is regular.

\item
The set $\mathrm{Sing} \subset \mathcal{M}_{(m,1)}^\pappas$ of  points at which 
$\mathcal{M}_{(m,1)}^\pappas\to \Spec(\co_\kk)$ is not smooth has dimension zero,
and is supported in characteristics dividing $d_\kk$.

\item
A geometric point $(A,\kappa,\psi)\in  \mathcal{M}_{(m,1)}^\pappas ( \F )$ 
defines an element of $\mathrm{Sing}$ if and only if 
 $\bm{j} \cdot \Lie(A)=0$.
This condition is equivalent to $\co_\kk$ acting on $\Lie(A)$ through the 
structure morphism $i_\F :\co_\kk \to \F$.

\item
Forgetting the subsheaf $\mathcal{F}$ defines a proper surjection
$\rho : \mathcal{M}_{(m,1)} \to \mathcal{M}_{(m,1)}^\pappas$,
which restricts to an isomorphism
\[
\mathcal{M}_{(m,1)}  \smallsetminus \rho^{-1}( \mathrm{Sing} ) 
\to \mathcal{M}_{(m,1)}^\pappas \smallsetminus \mathrm{Sing}.
\]
The inverse of this isomorphism is $(A,\kappa,\psi) \mapsto (A,\kappa,\psi,\mathcal{F})$,
where
\[
\mathcal{F} = \mathrm{ker}\left( \bm{j} : \Lie(A) \to \Lie(A) \right).
\]

\item
Suppose $z\in  \mathcal{M}_{(m,1)}^\pappas(\F)$
is a geometric point contained in the singular locus $\mathrm{Sing}$.  The fiber 
of $\mathcal{M}_{(m,1)}  \to  \mathcal{M}_{(m,1)}^\pappas$ over $z$
is isomorphic to the projective space $\mathbb{P}^m$ over $\F$.

\item
After base change to  $\co_\kk[1/d_\kk]$, the maps 
\[
 \mathcal{M}_{(m,1)} \to \mathcal{M}_{(m,1)}^\pappas \to \mathcal{M}_{(m,1)}^\naive
\]
become isomorphisms.
\end{enumerate}
\end{Thm}


\subsection{The Kodaira-Spencer map}


Let $\co$ be an $\co_\kk$-algebra, and $S$ a smooth $\co$-scheme.
For any  $S$-valued point
\begin{equation}\label{KS point}
(A,\kappa,\psi) \in \mathcal{M}_{(m,1)}^\pappas(S)
\end{equation}
there is a \emph{Kodaira-Spencer} map
\begin{equation}\label{KS}
\Phi_{\mathrm{KS}}: \Lie(A)^* \otimes_{\co_S} \Lie(A)^* \to \Omega^1_{S/\co}
\end{equation}
defined in \cite[Chapter 2]{kai-wen}. Here $\Lie(A)^*$ is the $\co_S$-dual of $\Lie(A)$.
The Kodaira-Spencer map factors through the quotient sheaf 
\[
 \Lie(A)^* \otimes_{\co_S}  \Lie(A)^* 
 / \langle \substack{    a\otimes b-b\otimes a   \\  (\kappa(x) a) \otimes b - a\otimes (\kappa(\overline{x})b) }   
 :  \substack{  a,b\in \Lie(A)^* \\  x\in \co_\kk }\rangle.
\]
This quotient sheaf is locally free if $d_\kk\in \co^\times$, but this seems to 
be rarely the case otherwise.  Because of this, the statement analogous 
to \cite[Proposition 2.3.5.2]{kai-wen}
 in our setting is not quite correct, but the same proof  yields the following  weaker result.

\begin{Prop}\label{Prop:KS etale}
Suppose the tuple (\ref{KS point})  corresponds to a  morphism 
\[
f: S \to  (\mathcal{M}_{(m,1)}^\pappas\smallsetminus \mathrm{Sing})_{/\co}.
\]
The morphism $f$ is unramified if and only if  the Kodaira-Spencer map 
(\ref{KS}) is surjective.
\end{Prop}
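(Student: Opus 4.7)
The plan is to mirror the strategy of \cite[Prop.~2.3.5.2]{kai-wen}, with extra care because $d_\kk$ need not be invertible on $S$ and so the source quotient of $\Phi_{\mathrm{KS}}$ is not in general locally free. The argument turns on reinterpreting both the moduli-theoretic tangent space and $\Phi_{\mathrm{KS}}$ through Grothendieck--Messing deformation theory, exactly as used in the proof of Proposition~\ref{Prop:zero etale}.

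First, I would reduce the statement to a fibrewise question. The morphism $f$ is unramified if and only if the pulled-back cotangent map $f^{*}\Omega^{1}_{(\mathtt{M}_{(m,1)}^{\pappas}\smallsetminus\mathrm{Sing})/\co}\to\Omega^{1}_{S/\co}$ is surjective, and by Nakayama this can be tested at each geometric point $s$ of $S$. Because $(\mathtt{M}_{(m,1)}^{\pappas}\smallsetminus\mathrm{Sing})/\co$ is smooth of relative dimension $m$ by Theorem~\ref{Thm:KP}(2), the target of the tangent map $df_{s}$ is a free $k(s)$-module of rank $m$, and Grothendieck--Messing identifies it with the submodule of $\mathrm{Hom}_{k(s)}(\Lie(A_{s})^{*},\Lie(A_{s}))$ cut out by $\co_\kk$-conjugate-linearity, symmetry under the polarization pairing, and compatibility with Pappas's wedge conditions. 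Away from $\mathrm{Sing}$, Theorem~\ref{Thm:KP}(3) together with Remark~\ref{naive wedge} force $\Lie(A)$ locally to have the unique signature-$(m,1)$ Pappas structure, which ensures that the wedge constraints are automatically preserved under small deformations of the Hodge filtration and are compatible with the expected rank count.

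Next, I would recall how $\Phi_{\mathrm{KS}}$ is constructed in \cite[Ch.~2]{kai-wen} and observe that it is designed to be dual to $df$. Concretely: for a first-order thickening $\tilde S\to S$ with square-zero ideal $I$, smoothness of $S/\co$ supplies a canonical geometric lift, while composition with $f$ supplies a moduli-theoretic lift; the difference of the two resulting Hodge filtrations inside $H_{1}^{\mathrm{dR}}(\tilde A/\tilde S)$ is an element of $\mathrm{Hom}_{\co_S}(\Lie(A)^{*},\Lie(A))\otimes_{\co_S}I$ subject to the three relations above, and a direct unpacking shows it agrees with $\Phi_{\mathrm{KS}}$ post-composed with the derivation $\Omega^{1}_{S/\co}\to I$ classifying $\tilde S$. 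Dualizing pointwise at $s$, surjectivity of $\Phi_{\mathrm{KS}}$ at $s$ is equivalent to injectivity of $df_{s}$, which is precisely the pointwise characterization of $f$ being unramified at $s$.

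The main obstacle is the bookkeeping caused by the source quotient of $\Phi_{\mathrm{KS}}$ failing to be locally free at primes dividing $d_\kk$: one cannot invoke a global rank argument to pass from fibrewise surjectivity back to a sheaf-theoretic isomorphism as in \cite{kai-wen}. However, for the ``iff'' statement we only need the fibrewise identification of $\Phi_{\mathrm{KS}}$ with the dual of $df$, and this is supplied intrinsically by the deformation-theoretic construction. The restriction to the open locus $\mathtt{M}_{(m,1)}^{\pappas}\smallsetminus\mathrm{Sing}$ --- where the target $\Omega^{1}_{(\mathtt{M}_{(m,1)}^{\pappas}\smallsetminus\mathrm{Sing})/\co}$ is well-behaved and the Pappas wedge conditions hold uniformly on a Zariski neighborhood --- is exactly what is needed to make the argument go through despite the pathology of the source quotient sheaf.
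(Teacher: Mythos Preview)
Your proposal is correct and takes essentially the same approach as the paper. In fact, the paper does not give a proof at all: it simply remarks that the analogue of \cite[Proposition~2.3.5.2]{kai-wen} fails as stated in this setting (because the quotient sheaf through which $\Phi_{\mathrm{KS}}$ factors is not locally free when $d_\kk$ is not invertible), but that the same proof yields this weaker conclusion. Your sketch fleshes out precisely what that means, and correctly isolates the point where the argument diverges from Lan's---namely, that one can no longer upgrade fibrewise surjectivity to a sheaf isomorphism by a rank count, so one obtains only ``unramified'' rather than ``\'etale.''
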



\subsection{Degenerating abelian schemes}


Fix a  projective $\co_\kk$-module 
$\mathfrak{n}$ of rank one,  let  $\underline{\mathfrak{n}}$ be the  associated constant 
$\co_\kk$-module scheme over $\Spec(\Z)$, and denote by $T_\mathfrak{n} = \Spec(\Z[\mathfrak{n}])$ 
the split torus  with character group $\Hom(T_\mathfrak{n},\mathbb{G}_m) \iso \mathfrak{n}.$
In what follows,  $X$ is a stack over $\co_\kk$,  $Z \hookrightarrow X$ is a closed substack, 
and  $U\subset X \smallsetminus Z$  is a dense open substack.

\begin{Def}
A \emph{semi-abelian scheme} over  $X$ is a 
smooth  commutative group scheme $G\to X$, such that for every 
geometric point $\Spec(\F) \to X$  the fiber $G_{/\F}$ is an extension 
\[
0 \to T \to G_{/\F} \to B \to 0
\]
of an abelian variety by a torus.  
\end{Def}

\begin{Def}
A \emph{degenerating abelian scheme of type $\mathfrak{n}$} relative to 
$(X,Z,U)$ is a quadruple $(G,\kappa, \psi ,\mathfrak{n})$ in which 
\begin{itemize}
\item $G$ is a semi-abelian scheme over $X$,  such that $G_{/U}$
is an abelian scheme,
\item
$\kappa : \co_\kk\to \End(G_{/U})$ is an action of $\co_\kk$ on $G_{/U}$,
\item
$\psi :G_{/U} \to G_{/U}^\vee$ is an $\co_\kk$-linear principal polarization,
\item
there is an abelian scheme $B_Z$ over $Z$ equipped with an $\co_\kk$-linear action, and 
an $\co_\kk$-linear exact sequence 
\[
0 \to T_{\mathfrak{n}/Z} \to G_{/Z} \to B_Z \to 0.
\]
\end{itemize}
By \cite[Proposition 3.3.15]{kai-wen}, the action $\kappa : \co_\kk \to \End(G_{/U})$ 
of the second condition  extends uniquely to an action of $\co_\kk$ on $G$,
so the property of $\co_\kk$-linearity in the final condition makes sense.
\end{Def}

\begin{Def}
\emph{Degeneration data of type $\mathfrak{n}$} relative to the triple $(X,Z,U)$
consists of a septuple $(B,\kappa,\psi, \mathfrak{n}, c,c^\vee,\tau)$ 
in which 
\begin{itemize}
\item
$B\to X$ is an abelian scheme,
\item
$\kappa: \co_\kk\to \End(B)$ is an action of $\co_\kk$ on $B$,
\item
$\psi:B\to B^\vee$ is an $\co_\kk$-linear principal polarization,
\item
$\mathfrak{n}$ is a projective $\co_\kk$-module of rank one,
\item
$c:\underline{\mathfrak{n}}_{/X} \to B^\vee$ and $c^\vee:\underline{\mathfrak{n}}_{/X} \to B$
are  $\co_\kk$-module maps  satisfying $c=\psi\circ c^\vee$,
\item
$\tau$ is a positive, symmetric, $\co_\kk$-linear isomorphism
\[
\tau: 1_{ ( \underline{\mathfrak{n}}\times\underline{\mathfrak{n}})_{/U}} \to
(c^\vee\times c)^*\mathcal{P}^{-1} \big|_{  ( \underline{\mathfrak{n}}\times\underline{\mathfrak{n}})_{/U} }
\]
 of  $\mathbb{G}_m$-biextensions on 
 $(\underline{\mathfrak{n}} \times\underline{\mathfrak{n}})_{/U}$.
 Here $\mathcal{P}$ is the Poincare sheaf on $B\times B^\vee$.
 \end{itemize}
\end{Def}

The entry $\tau$ requires some further explanation.  To give a  $\mathbb{G}_m$-biextension on 
$\underline{\mathfrak{n}} \times\underline{\mathfrak{n}}$ is equivalent to giving a collection of 
invertible sheaves 
$
 \{ \mathcal{L}(\mu,\nu) \}_{  (\mu,\nu)\in \mathfrak{n} \times \mathfrak{n} }
 $
  on $X$, together with isomorphisms 
\[
\mathcal{L}(\mu_1+\mu_2,\nu) \iso \mathcal{L}(\mu_1,\nu)\otimes \mathcal{L}(\mu_2,\nu)
\]
 and 
\[
\mathcal{L}(\mu,\nu_1+\nu_2) \iso \mathcal{L}(\mu,\nu_1)\otimes \mathcal{L}(\mu,\nu_2)
\]
 satisfying certain partial group law axioms.   Each pair $(\mu,\nu)$ determines sections
 $ c(\nu): X\to B^\vee$ and $c^\vee(\mu) : X\to B$, and  the biextension 
 $(c^\vee\times c)^*\mathcal{P}^{-1}$ corresponds to the collection of sheaves
 $\mathcal{L}_\mathcal{P}(\mu,\nu)^{-1}$, where
 \[
 \mathcal{L}_\mathcal{P} (\mu,\nu) = (c^\vee(\mu) \times c(\nu))^*\mathcal{P}.
 \] 
There are canonical isomorphisms
 \begin{equation}\label{poincare identities}
 \mathcal{L}_\mathcal{P}(\mu,\nu) \iso \mathcal{L}_\mathcal{P}(\nu,\mu)
 \qquad
 \mathcal{L}_\mathcal{P}(x\mu,\nu) \iso \mathcal{L}_\mathcal{P}(\mu,\overline{x}\nu)
 \end{equation}
 reflecting the symmetry and $\co_\kk$-linearity of the polarization $\psi$.
 The trivial biextension $1_{\underline{\mathfrak{n}} \times \underline{\mathfrak{n}} }$
 corresponds to the constant collection of invertible sheaves $\mathcal{L}_\mathrm{triv}(\mu,\nu) = \co_X$.
Thus  the isomorphism $\tau$  is determined by  a collection of trivializations
\[
\tau(\mu,\nu) : \co_{U}\iso \mathcal{L}_\mathcal{P}(\mu,\nu)^{-1} \big|_{U}.
\]
 The conditions of \emph{symmetry} and 
 \emph{$\co_\kk$-linearity} on $\tau$
 are that $\tau(\mu,\nu)=\tau(\nu,\mu)$ and $\tau(x\mu,\nu) = \tau(\mu,\overline{x}\nu)$ under the 
identifications (\ref{poincare identities}). The condition of \emph{positivity} is that for every 
$\mu\in\mathfrak{n}$,
the isomorphism 
$\tau(\mu,\mu)$ extends  (necessarily uniquely) to a homomorphism
\[
\tau(\mu,\mu) : \co_X\to \mathcal{L}_\mathcal{P}(\mu,\mu)^{-1},
\]
and that if $\mu\not=0$ this homomorphism becomes trivial after restricting to $Z$.

We next recall one of the fundamental results of Mumford 
and  Chai-Faltings \cite{faltings-chai}: an equivalence of categories 
between degenerating abelian schemes and  degeneration data.   
Suppose that $R$  is a Noetherian normal domain complete with respect to 
an ideal $I$ satisfying  $\mathrm{rad}(I)=I$.  For the remainder of this subsection,
\[
(X,Z,U) = ( \Spec(R) , \Spec(R/I) , \{ \eta \} ),
\]
where $\eta$ is the generic point of $\Spec(R)$.
For the proof the following fundamental result, see \cite[Corollary III.7.2]{faltings-chai} 
or \cite[Theorem 5.1.1.4]{kai-wen}.

\begin{Thm}\label{Thm:mumford}
The category of degeneration data relative to $(X,Z,U)$ is equivalent to the 
category of degenerating abelian schemes relative to $(X,Z,U)$ (in both categories,
morphisms  are isomorphisms in the obvious sense).
\end{Thm}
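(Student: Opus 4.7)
The plan is to reduce the theorem to the classical equivalence of Mumford and Chai--Faltings \cite[Corollary~III.7.2]{faltings-chai}, augmented with Lan's systematic treatment of PEL structures in \cite[Theorem~5.1.4]{kai-wen}, and verify that the $\co_\kk$-linear structures correspond on the two sides. Because $R$ is Noetherian normal and complete along $I$ with $\mathrm{rad}(I)=I$, the setting is exactly the one in which the Mumford construction produces an honest (not merely formal) algebraic object.

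In the direction from degeneration data to degenerating abelian schemes, I would proceed as follows. Starting from $(B,\kappa,\pol,\mathfrak{n},c,c^\vee,\tau)$, first build the Raynaud extension
\[
0\to T_\mathfrak{n}\to \tilde{G}\to B\to 0
\]
over $X$, whose isomorphism class as an extension of group schemes is classified by $c:\underline{\mathfrak{n}}\to B^\vee$. The $\co_\kk$-module structure on $\mathfrak{n}$ induces an $\co_\kk$-action on $T_\mathfrak{n}$, and $\co_\kk$-linearity of $c$ together with $\kappa$ on $B$ endows $\tilde{G}$ with a natural $\co_\kk$-action. Next, the trivialization $\tau$, interpreted over $U=\{\eta\}$ as a collection of sections $\underline{\mathfrak{n}}\to \tilde{G}_{/U}$, identifies a period subgroup; the positivity of $\tau$ guarantees that Mumford's quotient construction produces a semi-abelian scheme $G$ over $X$, abelian over $U$, and with $G_{/Z}\iso \tilde{G}_{/Z}$ (because positivity forces the periods to degenerate along $Z$). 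Symmetry of $\tau$ descends $\pol$ to an $\co_\kk$-linear principal polarization on $G_{/U}$, and $\co_\kk$-linearity of $\tau$ ensures that the $\co_\kk$-action on $\tilde{G}$ descends to $G$.

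In the reverse direction, given a degenerating abelian scheme $(G,\kappa,\pol,\mathfrak{n})$ one first applies Raynaud's theorem (using the completeness of $R$ along $I$) to lift the exact sequence $0\to T_{\mathfrak{n}/Z}\to G_{/Z}\to B_Z\to 0$ to an exact sequence $0\to T_\mathfrak{n}\to \tilde{G}\to B\to 0$ over all of $X$, uniquely determined by $G$. The classifying map of this extension produces $c$, the dual extension together with the inherited polarization data on $\tilde{G}$ produces $c^\vee$ and the identity $c=\pol\circ c^\vee$, and the fact that $G_{/U}$ is abelian forces the period homomorphism to be a full lattice, which in turn gives the trivialization $\tau$. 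Positivity, symmetry, and $\co_\kk$-linearity of $\tau$ are read off from the fact that the lifted $\tilde{G}$ is genuinely semi-abelian over $X$, that $\pol$ is a symmetric $\co_\kk$-linear principal polarization on $G_{/U}$, and that $\kappa$ extends (by \cite[Proposition~3.3.15]{kai-wen}) to all of $G$.

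The main technical obstacle is keeping track of the $\co_\kk$-equivariance and the polarization compatibilities through the Raynaud extension and through Mumford's quotient; this is where most of the work lies. Nonetheless this bookkeeping has been carried out in full generality in \cite[\S 4--5]{kai-wen}: the datum $(B,\kappa,\pol,\mathfrak{n},c,c^\vee,\tau)$ is precisely the degeneration datum attached to the PEL datum $(\co_\kk,\star,\mathfrak{n}\oplus \mathfrak{n}^\vee,\ldots)$ for our unitary problem (with the involution $\star$ being complex conjugation), so the theorem is the specialization of Lan's equivalence to this PEL setting. I would verify directly that the wedge/Kr\"amer signature conditions play no role in this equivalence, so that the same statement applies both to the Pappas and to the Kr\"amer moduli problems once the auxiliary subsheaf $\mathcal{F}$ is handled separately in the next step.
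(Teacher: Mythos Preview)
Your proposal is correct and follows exactly the paper's approach: the paper does not prove this theorem at all but simply cites \cite[Corollary~III.7.2]{faltings-chai} and \cite[Theorem~5.1.4]{kai-wen}, which are precisely the references you invoke. Your sketch of how the two functors are constructed and why the $\co_\kk$-linear and polarization data match up under Lan's PEL formalism is accurate and in fact more detailed than what the paper provides.
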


For us, the equivalence of the theorem is mostly a black box.  However, we do need
to know at least some information about how a degenerating abelian scheme 
$(G,\kappa,\psi,\mathfrak{n})$ is related
to its associated degeneration data $(B,\kappa,\psi,\mathfrak{n},c,c^\vee,\tau)$.  
In particular, we need to know how the Lie algebras of $G$ and $B$ are related.
The relation we need is provided by the theory of \emph{Raynaud extensions} as in 
\cite[Chapter II.1]{faltings-chai} or \cite[Section 3.3.3]{kai-wen}:
if we set  $X_\ell=\Spec(R/I^\ell)$, then for every $\ell$ there is an $\co_\kk$-linear short exact sequence
\[
0 \to T_{\mathfrak{n} / X_\ell} \to G_{/X_\ell} \to B_{/X_\ell} \to 0.
\]
Passing to Lie algebras and then taking the inverse limit over $\ell$, we find a short
exact sequence of $R$-modules
\begin{equation}\label{raynaud}
0\to \mathfrak{n}^*\otimes_\Z R \to \Lie(G) \to \Lie(B) \to 0,
\end{equation}
where $\mathfrak{n}^*=\Hom(\mathfrak{n},\Z)$ is the cocharacter group of $T_\mathfrak{n}$.

Our definition of degenerating abelian scheme is rather restrictive.  For example,
it only allows for $G_{/Z}$ to be an extension of an abelian scheme by  a torus of the form $T_\mathfrak{n}$.
That is, a torus whose character group is projective of rank \emph{one} over $\co_\kk$.
The following lemma tells us that such extensions are the only ones that need concern us.
 Keep $(X,Z,U)$ as above, but assume also that $R$ is an $\co_\kk$-algebra.

\begin{Lem}\label{Lem:restrictive degen}
Suppose $(A,\kappa,\psi) \in \mathcal{M}^\pappas_{(m,1)}(U)$,
and that $A = G_{/U}$ for some semi-abelian scheme $G$ over $X$.  Suppose also that
$G_{/Z}$ sits in an $\co_\kk$-linear exact sequence
\[
0\to T_{Z} \to G_{/Z} \to B_Z \to 0
\]
where $T_Z$ is a nontrivial split torus over $Z$, and $B_Z$ is an abelian scheme.  
There is an isomorphism  $T_Z\iso T_{\mathfrak{n}/Z}$
for some projective $\co_\kk$-module $\mathfrak{n}$  of rank one.  
In other words, $(G,\kappa,\psi,\mathfrak{n})$ is a degenerating abelian scheme.
Furthermore, if $(B,\kappa,\psi,\mathfrak{n},c,c^\vee,\tau)$ is the 
corresponding degeneration data, then 
\[
(B,\kappa,\psi) \in \mathcal{M}_{(m-1,0)}(X).
\]
\end{Lem}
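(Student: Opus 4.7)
The plan is to read off both conclusions from Pappas's wedge condition on $\Lie(A)$ combined with the Raynaud extension of $G$.  First I would set $\mathfrak{n} = X^*(T_Z)$, the character group of the split torus $T_Z$ equipped with its induced $\co_\kk$-action.  Because $T_Z$ is split, $\mathfrak{n}$ is a finitely generated free $\Z$-module, hence a finitely generated torsion-free module over the Dedekind domain $\co_\kk$, and therefore projective.  Let $s = \mathrm{rank}_{\co_\kk} \mathfrak{n}$, so that $s \geq 1$ by the nontriviality of $T_Z$.  The $\co_\kk$-equivariant duality between split tori and their character lattices gives $T_Z \iso T_{\mathfrak{n}/Z}$, so $(G, \kappa, \pol, \mathfrak{n})$ qualifies as a degenerating abelian scheme of type $\mathfrak{n}$, with associated degeneration data $(B, \kappa, \pol, \mathfrak{n}, c, c^\vee, \tau)$ supplied by Theorem \ref{Thm:mumford}.

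The main step is to show $s = 1$.  Pappas's wedge condition for signature $(m,1)$ asserts $\wedge^2 \bm{j} = 0$ on $\Lie(A)$; since this is a section of the locally free sheaf $\End(\wedge^2 \Lie(G))$ on the integral scheme $X$ which vanishes on the dense open $U$, it vanishes on all of $X$.  Hence $\bm{j}$ acts with rank at most one on $\Lie(G)$.  I would then use the Raynaud extension (\ref{raynaud})
\[
0 \to \mathfrak{n}^* \otimes_\Z R \to \Lie(G) \to \Lie(B) \to 0
\]
to compute the rank of $\bm{j}$ on $\mathfrak{n}^* \otimes_\Z R$.  Locally $\mathfrak{n}^*$ is free of rank $s$ over $\co_\kk$, so $\mathfrak{n}^* \otimes_\Z R$ is locally isomorphic as a $\co_\kk \otimes_\Z R$-module to $(\co_\kk \otimes_\Z R)^s$.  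The exactness of (\ref{period exact}) identifies the image of multiplication by $\bm{j}$ on $\co_\kk \otimes_\Z R$ with the ideal $\bm{J}$, which is locally free of $R$-rank one.  Therefore $\bm{j}$ acts with rank exactly $s$ on $\mathfrak{n}^* \otimes_\Z R$, forcing $s \leq 1$ and hence $s = 1$.  The main obstacle is making the rank computation work uniformly in all residue characteristics, including those dividing $d_\kk$ where $\bm{j}$ and $\bm{j}'$ coincide on residue fields; the trick is to identify the image of $\bm{j}$ with the locally free ideal $\bm{J}$ rather than to analyze the matrix of $\bm{j}$ directly.

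With $s = 1$, the rank-one image of $\bm{j}$ on $\mathfrak{n}^* \otimes_\Z R$ is contained in the image of $\bm{j}$ on $\Lie(G)$, which has rank at most one.  Passing to the generic fiber these two coincide as $K$-subspaces of $\Lie(A)$, so $\bm{j}$ carries $\Lie(A)$ into $\mathfrak{n}^* \otimes_\Z K$, and the induced endomorphism of $\Lie(B_\eta)$ vanishes.  This extends to $\bm{j} \cdot \Lie(B) = 0$ on all of $X$ because $\End_R(\Lie(B))$ is torsion-free as $\Lie(B)$ is locally free.  This is precisely the signature $(m-1, 0)$ condition on $\Lie(B)$; combined with $\dim B = (m+1) - 2 = m - 1$ and the $\co_\kk$-linear principal polarization of $B$ supplied by the degeneration data, it follows that $(B, \kappa, \pol) \in \mathtt{M}_{(m-1, 0)}(X)$.
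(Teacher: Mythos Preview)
Your proof is correct and follows essentially the same approach as the paper: both use the Raynaud exact sequence (\ref{raynaud}) together with Pappas's wedge condition $\wedge^2\bm{j}=0$ to bound the $\co_\kk$-rank of the character lattice by one, and then deduce $\bm{j}\cdot\Lie(B)=0$ by showing the one-dimensional image of $\bm{j}$ on $\Lie(A)$ already lies in the toric part. The only cosmetic differences are that the paper argues directly over the generic point $R_\eta$ and exhibits the nonvanishing wedge $(\bm{j}e_1)\wedge(\bm{j}e_2)$ explicitly, while you first extend $\wedge^2\bm{j}=0$ to all of $X$ and phrase everything in terms of ranks; and for the final step the paper invokes Remark~\ref{naive wedge} to write down the module structure of $\Lie(A)$, whereas you give the image-comparison argument directly.
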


\begin{proof}
By the theory of Raynaud extensions alluded to above (and Groethendieck's formal existence
theorem), the group schemes $T_Z$ and $B_Z$ lift to a split torus $T$ over $X$, and an abelian
scheme $B$ over $X$, both with $\co_\kk$-action, in such a way that for every positive integer 
$\ell$ one has an exact  sequence
\[
0 \to T_{ / X_\ell} \to G_{/X_\ell} \to B_{/X_\ell} \to 0.
\]
Taking Lie algebras and passing to the limit, we find an exact sequence
\[
0\to X_*(T) \otimes_\Z R \to \Lie(G) \to \Lie(B) \to 0
\]
where $X_*(T)$ is the cocharacter group of $T$.  We must first show that $X_*(T)$
has rank $1$ as an $\co_\kk$-module.  Suppose not, so that  $X_*(T)\otimes_\Z R_\eta$
is free  of  rank $r \ge 2$ over $\co_\kk\otimes_\Z R_\eta$. 
Let $e_1,e_2,\ldots,e_r$ be generators 
of $X_*(T)\otimes_\Z R_\eta$ as a $\co_\kk\otimes_\Z R_\eta$-module.
Recalling that $\co_\kk=\Z\oplus \Z\pi$, the set  $e_1,\pi e_1,e_2,\pi e_2$ can be extended
to an $R_\eta$-basis of $\Lie(A)=\Lie(G)\otimes_R R_\eta$.  From this it follows that
\[
 (\bm{j} e_1)\wedge (\bm{j} e_2)
= ( \pi e_1 + i_R (\pi) e_1 ) \wedge  ( \pi e_2 + i_R (\pi) e_2 )
\not=0, 
\]
contradicting our assumption that  $(A,\kappa,\psi)$ satisfies Pappas's wedge conditions.  
This shows that $r=1$, and so $T\iso T_\mathfrak{n}$ where $\mathfrak{n}^*\iso X_*(T)$.

Now that  we know $(G,\kappa,\psi,\mathfrak{n})$ is a degenerating abelian scheme,
to complete the proof we must show that the corresponding degeneration data
$(B,\kappa,\psi)$ satisfies  $\bm{j} \Lie(B)=0$.
From (\ref{raynaud}) we find the exact sequence
\[
0 \to \co_\kk\otimes_\Z R_\eta\to \Lie(A) \to \Lie(B) \otimes_R R_\eta \to 0.
\]
But Remark \ref{naive wedge} tells us that
\[
\Lie(A) \iso (\co_\kk\otimes_\Z R_\eta) \oplus \big( (\co_\kk\otimes_\Z R_\eta)/\bm{J} \big)^{m-1}.
\]
It  follows that  $\bm{j}$ kills $\Lie(B) \otimes_R R_\eta$, and hence also kills
 $\Lie(B)$.
\end{proof}

The following lemma is a partial converse to Lemma \ref{Lem:restrictive degen}.

\begin{Lem}\label{Lem:taut extension}
Suppose $(B,\kappa,\psi,\mathfrak{n},c,c^\vee,\tau)$ is degeneration data relative to 
$(X,Z,U)$, and  assume $(B,\kappa,\psi) \in \mathcal{M}_{(m-1,0)} ( X)$.
If  $(G,\kappa,\psi,\mathfrak{n})$ is the associated degenerating abelian scheme, then
\[
(G_{/U} ,\kappa,\psi) \in \mathcal{M}^\pappas_{(m,1)}(U).
\]
Moreover, the $R$-submodule $\mathcal{F} =  \mathrm{ker}\big( \bm{j} : \Lie(G ) \to \Lie(G) \big)$
of $\Lie(G)$ satisfies Kr\"amer's conditions, and in particular 
\[
(G_{/U} ,\kappa,\psi,\mathcal{F}) \in \mathcal{M}_{(m,1)}(U).
\]
\end{Lem}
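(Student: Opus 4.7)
The plan is to work on $X$ with the Raynaud exact sequence~(\ref{raynaud})
\[
0\to \mathfrak{n}^*\otimes_\Z R\to \Lie(G)\to \Lie(B)\to 0
\]
and to exploit the algebraic identity $\bm{j}\bm{j}'=\bm{j}'\bm{j}=0$ in $\co_\kk\otimes_\Z R$, which is an immediate consequence of the exactness of~(\ref{period exact}). The hypothesis $(B,\kappa,\pol)\in \mathtt{M}_{(m-1,0)}(X)$ says that $\co_\kk$ acts on $\Lie(B)$ through $i_S$, so $\bm{j}\cdot\Lie(B)=0$. Because $\mathfrak{n}^*$ is a projective $\co_\kk$-module of rank one, the submodule $\mathfrak{n}^*\otimes_\Z R$ is a locally free $\co_\kk\otimes_\Z R$-module of rank one. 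Multiplicativity of characteristic polynomials in the sequence then shows that $\kappa(x)$ acts on $\Lie(A):=\Lie(G_{/U})$ with characteristic polynomial $(T-i_S(x))(T-i_S(\overline{x}))(T-i_S(x))^{m-1}=(T-i_S(x))^m(T-i_S(\overline{x}))$, which is the naive signature condition $(m,1)$.

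The main calculation is to identify the image of $\bm{j}\colon \Lie(A)\to \Lie(A)$. Since $\bm{j}$ annihilates $\Lie(B)$, this image lies in the submodule $(\mathfrak{n}^*\otimes_\Z R)|_U$. For any section $v$ of $\Lie(A)$, the relation $\bm{j}'\bm{j}=0$ gives $\bm{j}'(\bm{j}v)=0$, so $\bm{j}v$ belongs to the kernel of $\bm{j}'$ acting on $(\mathfrak{n}^*\otimes_\Z R)|_U$. Because $\mathfrak{n}^*\otimes_\Z R$ is locally isomorphic to $\co_\kk\otimes_\Z R$ as a $\co_\kk\otimes_\Z R$-module, the exactness of~(\ref{period exact}) identifies this kernel with the submodule $\bm{J}\cdot(\mathfrak{n}^*\otimes_\Z R)|_U$, which is locally free of rank one over $\co_U$. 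Conversely, restricting $\bm{j}$ to $(\mathfrak{n}^*\otimes_\Z R)|_U$ already surjects onto this same submodule. Hence the image of $\bm{j}$ on $\Lie(A)$ is locally free of rank one, and $\mathcal{F}|_U=\ker(\bm{j}|_{\Lie(A)})$ is a locally free direct summand of $\Lie(A)$ of rank $m$.

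All remaining assertions follow quickly. Pappas's wedge conditions hold because $\bm{j}\Lie(A)$ has rank one (giving $\wedge^2\bm{j}=0$) and $\mathrm{image}(\bm{j}')\subset\ker(\bm{j})=\mathcal{F}|_U$ has rank $m$ (giving $\wedge^{m+1}\bm{j}'=0$), so $(G_{/U},\kappa,\pol)\in \mathtt{M}^\pappas_{(m,1)}(U)$. Kr\"amer's conditions for $\mathcal{F}|_U$ also follow: $\bm{j}\cdot\mathcal{F}=0$ by definition, $\bm{j}'\cdot\Lie(A)\subset \mathcal{F}|_U$ from $\bm{j}\bm{j}'=0$, and $\Lie(A)/\mathcal{F}|_U$ is locally free of rank one, being isomorphic to $\mathrm{image}(\bm{j})$. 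The main obstacle throughout is controlling the image and kernel of $\bm{j}$ precisely, not just up to rank inequalities; this is handled by the local identification $\mathfrak{n}^*\otimes_\Z R\cong \co_\kk\otimes_\Z R$ together with the fact that $\bm{J}\subset\co_\kk\otimes_\Z\co_S$ is itself locally free of rank one over $\co_S$.
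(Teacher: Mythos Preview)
Your argument is correct and reaches the same conclusions, but by a different mechanism than the paper's proof. The paper observes that $\Lie(B)\cong R^{m-1}$ with $\co_\kk$ acting through $i_R$, then uses the periodic resolution~(\ref{period exact}) to compute $\mathrm{Ext}^i_{\co_\kk\otimes_\Z R}(R,\mathfrak{n}^*\otimes_\Z R)=0$ for $i>0$, so the Raynaud sequence~(\ref{raynaud}) \emph{splits} as $\co_\kk\otimes_\Z R$-modules. This yields the explicit decomposition $\Lie(G)\cong(\mathfrak{n}^*\otimes_\Z R)\oplus R^{m-1}$, from which all signature and Kr\"amer conditions are read off at once. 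Your route avoids the Ext computation and the splitting entirely: you pin down $\mathrm{image}(\bm{j})$ directly by chasing it into $\mathfrak{n}^*\otimes_\Z R$ and invoking the exactness of~(\ref{period exact}) there (which holds after tensoring by the flat $\co_\kk$-module $\mathfrak{n}^*$), then deduce that $\ker(\bm{j})$ is a corank-one locally free summand. Your approach is slightly more elementary; the paper's approach buys an explicit module description of $\Lie(G)$, which is conceptually cleaner and would be reusable if further properties of $\Lie(G)$ were needed.

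One cosmetic point: you repeatedly write $|_U$ when your argument in fact runs over all of $X=\Spec(R)$, as the lemma requires for the claim about $\mathcal{F}\subset\Lie(G)$. Since the Raynaud sequence and the exactness of~(\ref{period exact}) hold over $R$, your computations of $\mathrm{image}(\bm{j})$ and $\ker(\bm{j})$ are valid on $X$; the restriction to $U$ is only needed when invoking the moduli interpretation $(G_{/U},\kappa,\pol)\in\mathtt{M}^\pappas_{(m,1)}(U)$.
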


\begin{proof}
As an $\co_\kk\otimes_\Z R$-module, $\Lie(B)\iso R^{m-1}$, where $\co_\kk$ acts on 
$R$ through the structure map $i_R: \co_\kk\to R$.  That is, $R\iso (\co_\kk\otimes_\Z R)/\bm{J}$.
The exact sequence (\ref{raynaud}) may therefore be rewritten as
\[
0 \to \mathfrak{n}^*\otimes_\Z R \to \Lie(G) \to R^{m-1}  \to 0.
\]
Using the exactness of (\ref{period exact}), the free resolution
\[
\cdots \map{\bm{j} }   \co_\kk\otimes_\Z\co_S
\map{\bm{j}'}  \co_\kk\otimes_\Z\co_S \map{\bm{j}}   \co_\kk\otimes_\Z\co_S 
\map{ x\otimes y\to i_R(x)y }  R \to 0
\]
of $R$ allows us to compute
\[
\mathrm{Ext}^i_{\co_\kk\otimes_\Z R} ( R, \mathfrak{n}^*\otimes_\Z R    ) =0 
\]
for $i>0$.  Thus (\ref{raynaud}) splits  and
\[
\Lie(G) \iso  (\mathfrak{n}^*\otimes_\Z R) \oplus R^{m-1}
\]
as $\co_\kk\otimes_\Z R$-modules.  All claims now follow easily.
\end{proof}


\subsection{Construction of  boundary charts}


Fix a projective $\co_\kk$-module $\mathfrak{n}$ of rank one.
We now construct a closed immersion of $\co_\kk$-stacks
$s_\mathfrak{n}:Z_\mathfrak{n} \hookrightarrow X_\mathfrak{n}$
equipped with tautological degeneration data of type $\mathfrak{n}$ relative to 
$(X_\mathfrak{n},Z_\mathfrak{n},U_\mathfrak{n})$, where 
$U_\mathfrak{n}=X_\mathfrak{n}\smallsetminus Z_\mathfrak{n}$.
The stack $X_\mathfrak{n}$ will have the structure of a 
 line bundle  over $Z_\mathfrak{n}$, and  $s_\mathfrak{n}$ is the zero section.

Using \cite[Proposition 5.2.3.9]{kai-wen} we may define a smooth proper stack 
\[
Z_\mathfrak{n} = \Hom_{\co_\kk} 
\big(  \underline{\mathfrak{n}}_{/S}  , B ^\vee \big),
\]
over $\mathcal{M}_{(m-1,0)}$ of relative dimension $m-1$, 
where $(B,\kappa,\psi)$ is the universal object over $\mathcal{M}_{ (m-1,0) }$. 
After pulling back the triple $(B,\kappa,\psi)$ to $Z_\mathfrak{n}$ one
obtains a tautological $\co_\kk$-linear morphism 
$c:\underline{\mathfrak{n}}  \to B^\vee$ of stacks over $Z_\mathfrak{n}$. 
Set  $c^\vee = \psi^{-1} \circ c:\underline{\mathfrak{n}} \to  B.$
The group $\mu(\kk)$ acts on $Z_\mathfrak{n}$ via its natural action on $\mathfrak{n}$.

Our assumption that $d_\kk$ is odd implies that 
\[
Q_\mathfrak{n} = \mathrm{Sym}^2_\Z (\mathfrak{n})/ \langle (x\mu)\otimes \nu- \mu\otimes(\overline{x}\nu) 
: x\in \co_\kk, \mu,\nu \in \mathfrak{n}\rangle,
\]
is a free $\Z$-module of rank one.  Let $E_\mathfrak{n}$ be the torus over $\Spec(\Z)$ defined by
\[
E_\mathfrak{n}\iso \Hom (Q_\mathfrak{n} ,\mathbb{G}_m).
\]
The group law on $Q_\mathfrak{n}$ will be written additively.
An ordered pair $(\mu,\nu) \in \mathfrak{n}\times\mathfrak{n}$ 
determines sections $\mu,\nu : Z_\mathfrak{n} \to \underline{\mathfrak{n}}$, which 
in turn define morphisms $c^\vee(\mu) : Z_\mathfrak{n} \to B$ and 
$c(\nu):Z_\mathfrak{n} \to B^\vee$. The pullback 
\[
\mathcal{L}_\mathcal{P} (\mu,\nu) = (c^\vee(\mu)\times c(\nu))^*\mathcal{P}
\]
 of the   Poincare sheaf $\mathcal{P}$ on $B\times B^\vee$
is an invertible sheaf on $Z_\mathfrak{n}$.  Up to canonical isomorphism,
the sheaf $\mathcal{L}_\mathcal{P} (\mu,\nu)$ depends only on the image of 
$\mu\otimes\nu$ in $Q_\mathfrak{n}$.
Thus we may associate an invertible sheaf $\mathcal{L}_\mathcal{P}(q)$ to each 
$q\in Q_\mathfrak{n}$  in such a way that
\[
\mathcal{L}_\mathcal{P}(q_1 + q_2)  \iso 
\mathcal{L}_\mathcal{P}(q_1)\otimes\mathcal{L}_\mathcal{P}(q_2)
\]
 canonically.   These isomorphisms define an $\co_{Z_\mathfrak{n}}$-algebra structure
 on the sheaf
 $ \co_{U_\mathfrak{n}} =  \bigoplus_{q\in Q_\mathfrak{n}} \mathcal{L}_\mathcal{P}(q)$, 
which is therefore  the structure sheaf of a $Z_\mathfrak{n}$-stack
\[
U_\mathfrak{n} = \underline{\Spec}_{Z_\mathfrak{n}}
 \Big( \bigoplus_{q\in Q_\mathfrak{n}} \mathcal{L}_\mathcal{P}(q) \Big).
\]
For each $q\in Q_\mathfrak{n}$, multiplication in $\co_{U_\mathfrak{n}}$ defines an 
isomorphism of $\co_{Z_\mathfrak{n}}$-modules
$
\mathcal{L}_\mathcal{P}(q) \otimes \co_{U_\mathfrak{n}} \to  \co_{U_\mathfrak{n}}.
$
Thus, after pullback via $U_\mathfrak{n}\to  Z_\mathfrak{n}$ each of the 
sheaves $\mathcal{L}_\mathcal{P}(q)$ acquires a canonical trivialization, 
and dualizing yields isomorphisms
\[
\tau(q) : \co_{U_\mathfrak{n}} \to \mathcal{L}_\mathcal{P}(q)^{-1}
\]
of sheaves of $\co_{U_\mathfrak{n}}$-modules.
This collection of isomorphisms defines an
isomorphism of $\mathbb{G}_m$-biextensions
\[
\tau : 1_{\underline{\mathfrak{n}} \times \underline{\mathfrak{n}} }\to 
(c^\vee \times c)^*\mathcal{P}^{-1}
\]
over $(\underline{\mathfrak{n}} \times \underline{\mathfrak{n}})_{/U_\mathfrak{n}}$.

The $\R$-vector space $Q_{\mathfrak{n},\R}=Q_\mathfrak{n}\otimes_\Z\R$ has a  notion of positivity 
\[
Q_{\mathfrak{n},\R}^{\ge 0} = \{ \mu\otimes\mu \in Q_{\mathfrak{n},\R} : \mu\in \mathfrak{n}_\R \},
\]
and there is an induced ordering $\ge$ on $Q_\mathfrak{n}$. 
   Define a partial compactification  $U_\mathfrak{n}\to X_\mathfrak{n}$ by
\[
X_\mathfrak{n} = \underline{\Spec}_{Z_\mathfrak{n}}  \Big( \bigoplus_{q \ge 0 } 
\mathcal{L}_\mathcal{P} (q)  \Big).
\]
The ideal sheaf 
\[
\mathcal{I}_\mathfrak{n} =  \bigoplus_{   q> 0  }\mathcal{L}_\mathcal{P}(q)
\]
defines a closed substack  of $X_\mathfrak{n}$ canonically identified with $Z_\mathfrak{n}$.
On the other hand, the inclusion $\co_{Z_\mathfrak{n}} \to \co_{X_\mathfrak{n}}$ induces a morphism
$X_\mathfrak{n} \to Z_\mathfrak{n}$, giving $X_\mathfrak{n}$ the structure of a 
vector bundle over $Z_\mathfrak{n}$.  In particular,
$X_{\mathfrak{n}}$ is smooth of relative dimension $m$ over $\co_\kk$.
We  now have tautological degeneration data 
$(B,\kappa,\psi,\mathfrak{n},c,c^\vee,\tau)$ 
relative to  $( X_\mathfrak{n} , Z_\mathfrak{n},U_\mathfrak{n})$. 
Each line bundle $\mathcal{L}_\mathcal{P}(q)$  is invariant under the action of $\mu(\kk)$
on $Z_\mathfrak{n}$, and hence the action lifts to an action of $\mu(\kk)$ on  $X_\mathfrak{n}$.

We have now constructed a smooth and proper divisor
\begin{equation}\label{the boundary}
 \bigsqcup_\mathfrak{n}  \mu(\kk) \backslash Z_{ \mathfrak{n} } \hookrightarrow
\bigsqcup_\mathfrak{n}  \mu(\kk) \backslash X_{ \mathfrak{n} },
\end{equation}
where the disjoint unions are over the isomorphism classes of projective $\co_\kk$-modules of 
rank one.  This divisor will soon become the boundary of our compactification of $\mathcal{M}_{(m,1)}$.


\subsection{Attaching the boundary}


For each  geometric point $z$  of  $Z_{\mathfrak{n} }$ let 
$R_z=\co_{X_\mathfrak{n},z}$
 be the \'etale local ring of $X_\mathfrak{n}$ at $z$,
and let $I_z \subset R_z$ be the ideal defined by 
the divisor $Z_\mathfrak{n} \to  X_\mathfrak{n}$.
Let $R_z^\wedge$ be the completion of  $R_z$ with respect to  $I_z$, and 
let $\eta_z$ and $\eta_z^\wedge$ denote the generic points of $R_z$ and $R_z^\wedge$, 
respectively.  As $X_\mathfrak{n}$ is 
smooth over $\co_\kk$, the $\co_\kk$-algebras $R_z$ and $R_z^\wedge$ are Noetherian normal domains.
Applying Theorem \ref{Thm:mumford} to  the pullback of the tautological degeneration data relative to 
$(X_\mathfrak{n},Z_\mathfrak{n},U_\mathfrak{n})$, one obtains  a degenerating abelian scheme 
$( {}^\heartsuit G_z,  {}^\heartsuit \kappa_z,  {}^\heartsuit \psi_z,  \mathfrak{n} )$
relative to 
\[
(X_z^\wedge, Z_z^\wedge, U_z^\wedge)=
(\Spec(R_z^\wedge) , \Spec(R_z^\wedge/I_z) ,  \{ \eta_z^\wedge\} ).
\]

For every   \'etale neighborhood  $X^{(z)} \to  X_\mathfrak{n}$ of a 
geometric point $z$, 
define a closed subscheme of $X^{(z)}$ by
\[
Z^{(z)}  = Z_\mathfrak{n} \times_{X_\mathfrak{n}}   X^{(z)},
\]
and an open subscheme
\[
U^{(z)} =U_\mathfrak{n} \times_{X_\mathfrak{n}}   X^{(z)}.
\]

\begin{Prop}
For every geometric point $z$ of $Z_\mathfrak{n}$
there is an \'etale neighborhood 
\[
X^{(z)} \to  X_\mathfrak{n}
\] 
of $z$ and  a degenerating abelian scheme $(G^{(z)},\kappa^{(z)},\psi^{(z)},\mathfrak{n})$
relative to $(X^{(z)} , Z^{(z)} , U^{(z)})$
with the following properties.
\begin{enumerate}
\item
There exists a ring automorphism  $\gamma: R_z^\wedge\to R_z^\wedge$ 
inducing the identity on $R_z^\wedge/I_z$ such that 
\[
(G^{(z)},\kappa^{(z)},\psi^{(z)} )_{/R_z^\wedge}  \iso 
\gamma^*( {}^\heartsuit G_z,  {}^\heartsuit \kappa_z ,  {}^\heartsuit \psi_z ),
\]
where the left hand side is the pullback of $(G^{(z)},\kappa^{(z)} ,\psi^{(z)} )$
via the canonical map $\Spec(R_z^\wedge) \to X^{(z)}$, and the
right hand side is the pullback of 
$( {}^\heartsuit G_z,  {}^\heartsuit \kappa_z ,  {}^\heartsuit \psi_z )$ via 
$\gamma: R_z^\wedge\to R_z^\wedge$.
\item
The  tuple $(G^{(z)},\kappa^{(z)} ,\psi^{(z)})_{/U^{(z)}}$
defines an \'etale morphism  
\[
U^{(z)} \to \mathcal{M}^\pappas_{(m,1)  } \smallsetminus\mathrm{Sing}.
\]
\item
The subsheaf $\mathcal{F}^{(z)}= \mathrm{ker} \big( \bm{j} : \Lie(G^{(z)} )  \to \Lie( G^{(z)}  \big)$
of $\Lie( G^{(z)} )$ satisfies Kr\"amer's conditions.
\end{enumerate}
\end{Prop}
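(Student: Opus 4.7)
The plan is to follow the Chai-Faltings-Lan template. First, restricting the tautological degeneration data to the complete local ring $R_z^\wedge$ and applying Theorem~\ref{Thm:mumford} yields the formal degenerating abelian scheme $({}^\heartsuit G_z, {}^\heartsuit\kappa_z, {}^\heartsuit\pol_z, \mathfrak{n})$ over $\Spec(R_z^\wedge)$. Next, Artin approximation applied to the functor of degeneration data of type $\mathfrak{n}$ on $X_\mathfrak{n}$-schemes produces an \'etale neighborhood $X^{(z)}\to X_\mathfrak{n}$ of $z$ carrying an algebraic tuple $(G^{(z)},\kappa^{(z)},\pol^{(z)},\mathfrak{n})$, whose pullback to $\Spec(R_z^\wedge)$ agrees with $({}^\heartsuit G_z, {}^\heartsuit\kappa_z, {}^\heartsuit\pol_z)$ up to an automorphism $\gamma$ of $R_z^\wedge$ inducing the identity on $R_z^\wedge/I_z$. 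This delivers property~(1).

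Property~(3) then follows from Lemma~\ref{Lem:taut extension}: the abelian part of the degeneration data was built as the pullback of the universal object from $\mathtt{M}_{(m-1,0)}$ by the very construction of $Z_\mathfrak{n}$, so the lemma guarantees that $\mathcal{F}^{(z)} = \mathrm{ker}(\bm{j}: \Lie(G^{(z)})\to\Lie(G^{(z)}))$ satisfies Kr\"amer's conditions on the formal completion at $z$. Since these conditions concern only the $\co_\kk$-module structure and rank of $\mathcal{F}^{(z)}$ and its complement, they descend to $X^{(z)}$. The same lemma shows that $(G^{(z)},\kappa^{(z)},\pol^{(z)})_{/U^{(z)}}$ defines a morphism into $\mathtt{M}^\pappas_{(m,1)}$, and its image avoids $\mathrm{Sing}$ because over $U^{(z)}$ the torus part of $G^{(z)}$ is nontrivial, forcing $\Lie(G^{(z)})$ to contain a summand on which $\bm{j}$ does not act as zero, in view of Theorem~\ref{Thm:KP}(3).

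For property~(2), both $U^{(z)}$ and $\mathtt{M}^\pappas_{(m,1)}\smallsetminus\mathrm{Sing}$ are smooth over $\co_\kk$ of relative dimension $m$, so any unramified morphism between them is automatically \'etale. Proposition~\ref{Prop:KS etale} therefore reduces the problem to surjectivity of the Kodaira-Spencer map $\Phi_\mathrm{KS}$ of $(G^{(z)},\kappa^{(z)},\pol^{(z)})_{/U^{(z)}}$. The Raynaud extension (\ref{raynaud}) filters $\Lie(G^{(z)})^*$, and one verifies by a direct unwinding that the torus-with-torus pairing in $\Phi_\mathrm{KS}$, combined with the canonical trivializations $\tau(q)$ of the sheaves $\mathcal{L}_\mathcal{P}(q)$ that define $\co_{X_\mathfrak{n}}$ as a graded algebra over $\co_{Z_\mathfrak{n}}$, produces precisely the conormal generator of $Z^{(z)}\hookrightarrow X^{(z)}$; the pairings involving $\Lie(B)^*$ recover $\Omega^1_{Z^{(z)}/\co_\kk}$ via the realization of $Z_\mathfrak{n}$ as the relative scheme of $\co_\kk$-linear homomorphisms $\underline{\mathfrak{n}}\to B^\vee$ over $\mathtt{M}_{(m-1,0)}$, the latter being \'etale over $\co_\kk$ by Proposition~\ref{Prop:zero etale}. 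Together these span $\Omega^1_{X^{(z)}/\co_\kk}$.

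The hard step is clearly~(2): parts~(1) and~(3) are formal consequences of Mumford's theorem, Artin approximation, and Lemma~\ref{Lem:taut extension}, but the Kodaira-Spencer calculation requires a concrete identification of the biextension trivializations $\tau$ used to build the boundary chart with the differential coming from Grothendieck-Messing theory. This is the same identification that is the technical heart of \cite{faltings-chai, kai-wen}; in the present setting one must additionally check that Kr\"amer's refined condition $\bm{j}\mathcal{F}^{(z)}=0$ does not obstruct the computation, but because unramifiedness is tested against the Pappas (not the Kr\"amer) moduli problem, this compatibility is automatic from Lemma~\ref{Lem:taut extension}.
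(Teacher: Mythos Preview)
Your proposal follows the same Faltings--Chai--Lan template as the paper: Artin approximation for (1), Lemma~\ref{Lem:taut extension} for (3) and for landing in $\mathtt{M}^\pappas_{(m,1)}$, and Kodaira--Spencer surjectivity via Proposition~\ref{Prop:KS etale} for (2). Two small points of phrasing deserve tightening. First, it is not correct to say that ``over $U^{(z)}$ the torus part of $G^{(z)}$ is nontrivial''; over $U^{(z)}$ the group $G^{(z)}$ is an abelian scheme. The argument you want is that the Raynaud extension (equivalently, the splitting in the proof of Lemma~\ref{Lem:taut extension}) furnishes a free $\co_\kk\otimes_\Z\co_S$-summand inside $\Lie(G^{(z)})$, so $\bm{j}\cdot\Lie(G^{(z)})\neq 0$ at every point and Theorem~\ref{Thm:KP}(3) rules out $\mathrm{Sing}$. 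Second, Kr\"amer's conditions do not automatically ``descend'' from the formal completion to all of $X^{(z)}$; they are open conditions that hold at $z$, so one must \emph{shrink} $X^{(z)}$ (as the paper does) to obtain them globally. With these corrections your argument matches the paper's, your route to \'etaleness from unramifiedness (equal-dimensional smooth source and target) being a valid alternative to the paper's citation of \cite[Corollary~6.3.1.13]{kai-wen}.
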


\begin{proof}
The degenerating abelian scheme
$( {}^\heartsuit G_z,  {}^\heartsuit \kappa_z,   {}^\heartsuit \psi_z,   \mathfrak{n} )$
need not descend to a degenerating abelian scheme relative to
\begin{equation}\label{incomplete triple}
(X_z,Z_z,U_z) = (\Spec(R_z) , \Spec(R_z/I_z) , \{ \eta_z \} )
\end{equation}
  but it can be approximated arbitrarily closely
by degenerating abelian schemes that do descend.  This means that, as in
\cite[Proposition IV.4.3]{faltings-chai} or \cite[Proposition 6.3.2.1]{kai-wen},
there is  a degenerating abelian scheme  $(  G_z,   \kappa_z,   \psi_z,   \mathfrak{n} )$
relative to (\ref{incomplete triple})
and  a ring  automorphism $\gamma\in \Aut(R_z^\wedge)$ inducing the identity 
on $R_z^\wedge/I_z$ such that
\[
 (  G_z,   \kappa_z , \psi_z   )_{/X_z^\wedge}
\iso 
\gamma^*( {}^\heartsuit G_z,  {}^\heartsuit \kappa_z ,  {}^\heartsuit \psi_z ).
\]
Moreover, if we denote by  $\co_{\kk,z}$  the strict Henselization of $\co_\kk$ at $z$,
the Kodaira-Spencer map
\[
\Lie(G_{z/U_z})^*  \otimes_{R_{ z , \eta_z} }  \Lie(G_{z/U_z})^*
\to  \big( \Omega^1_{ X_z/\co_{\kk,z} }  \big)_{/U_z}
\]
of (\ref{KS}) extends, as in \cite[Proposition 6.2.5.18]{kai-wen}, to  a surjection of $R_z$-modules
\begin{equation}\label{mumford etale}
 \Lie(G_{z})^*  \otimes_{R_z}  \Lie(G_{z})^*   \to \Omega^1_{ X_z/\co_{\kk,z}}(d\log\infty),
\end{equation}
where
$
\Omega^1_{ X_z/\co_{\kk,z}}(d\log\infty) = 
\Omega^1_{ X_z/\co_{\kk,z}} \otimes_{R_z} I^{-1}_{\mathfrak{n},z}.
$

As in \cite[Proposition IV.4.4]{faltings-chai} or  
\cite[Proposition 6.3.2.6]{kai-wen},   there is  some \'etale neighborhood $X^{(z)}$ of 
$z$ in $X_\mathfrak{n}$  such that   $(G_z,\kappa_z,\psi_z)$
descends to a degenerating abelian scheme $(G^{(z)},\kappa^{(z)},\psi^{(z)},\mathfrak{n})$
relative to $(X^{(z)} , Z^{(z)} , U^{(z)})$. By   Lemma \ref{Lem:taut extension},
the subsheaf 
\[
\mathcal{F}^{(z)} = \mathrm{ker} \big(
\bm{j} : \Lie(G^{(z)}) \to \Lie(G^{(z)})
\big)
\]
satisfies  Kr\"amer's conditions locally at $z$, 
and so after shrinking $X^{(z)}$ we may assume that it satisfies these conditions
everywhere on $X^{(z)}$. In particular, 
\[
(G^{(z)},\kappa^{(z)},\psi^{(z)},\mathcal{F}^{(z)} )_{/U^{(z)}} \in
\mathcal{M}_{(m,1)}(U^{(z)}),
\]
and the map
\[
U^{(z)} \to \mathcal{M}^\pappas_{(m,1)}
\]
corresponding to $(G^{(z)},\kappa^{(z)},\psi^{(z)})$ does not meet the singular
locus $\mathrm{Sing}$ (this is clear from the characterization of $\mathrm{Sing}$ found in Theorem
\ref{Thm:KP}).  Combining the surjectivity of (\ref{mumford etale}) with 
Proposition \ref{Prop:KS etale} shows that after further shrinking $X^{(z)}$ the map
$U^{(z)} \to \mathcal{M}^\pappas_{(m,1)}$ is unramified.   By \cite[Corollary 6.3.1.13]{kai-wen}
it is also \'etale.
\end{proof}

As  $\mathcal{M}_{(m,1)} \to \mathcal{M}^\pappas_{(m,1)}$
is an isomorphism away from the closed set $\mathrm{Sing}$,
each of the maps  $U^{(z)} \to \mathcal{M}^\pappas_{(m,1)}$ of the 
proposition admits a unique lift to an \'etale morphism 
\[
U^{(z)} \to \mathcal{M}_{(m,1)}.
\]
 By the quasi-compactness of $Z_\mathfrak{n}$
we may choose finitely many geometric  points  $z$  so that the union of the images of 
$X^{(z)} \to X_\mathfrak{n}$ cover $Z_\mathfrak{n}$.
Letting $\mathfrak{n}$ vary over all isomorphism classes of 
projective $\co_\kk$-modules of rank one, let $\mathcal{X}$ be the 
disjoint union of   the finitely many   $X^{(z)}$'s so constructed,
and let $\mathcal{U}\subset \mathcal{X}$ be the disjoint union of the finitely many
$U^{(z)}$'s.  The obvious map
\[
\mathcal{M}_{(m,1)} \sqcup \mathcal{U}    \to  \mathcal{M}_{(m,1)}
\]
 is an   \'etale surjection, and realizes 
$\mathcal{M}_{(m,1)}$ as the quotient of 
$\mathcal{M}_{(m,1)}\sqcup \mathcal{U}$ by an
\'etale equivalence relation 
\[
\mathcal{R}_0 \to 
( \mathcal{M}_{(m,1)} \sqcup \mathcal{U}  ) \times_{ \co_\kk } 
  ( \mathcal{M}_{(m,1)} \sqcup \mathcal{U}  ) .
\]

The normalization of 
$( \mathcal{M}_{(m,1)} \sqcup \mathcal{X}  )
 \times_{ \co_\kk  }   ( \mathcal{M}_{(m,1)} \sqcup \mathcal{X}  )$ 
in $\mathcal{R}_0$ 
defines a new stack, $\mathcal{R}$, sitting in the commutative diagram
\[
\xymatrix{
{\mathcal{R}}  \ar[d]_{ r } &  {\mathcal{R}_0 }   \ar[l]  \ar[d]  \\
 {    ( \mathcal{M}_{(m,1)} \sqcup \mathcal{X}  ) \times_{ \co_\kk }  
  ( \mathcal{M}_{(m,1)} \sqcup \mathcal{X}  )  } 
  & {  ( \mathcal{M}_{(m,1)} \sqcup \mathcal{U}  ) \times_{ \co_\kk  } 
    ( \mathcal{M}_{(m,1)} \sqcup \mathcal{U}  ) . }    \ar[l]
}
\]
Exactly as in \cite[Proposition IV.5.4]{faltings-chai} or \cite[Proposition 6.3.3.13]{kai-wen}, 
the  morphism $r$  is an \'etale equivalence relation.  
Let $\mathcal{M}^*_{(m,1)}$ be the quotient of   $\mathcal{M}_{(m,1)} \sqcup \mathcal{X}$
by  $r$. The following theorem summarizes the important properties of $\mathcal{M}^*_{(m,1)}$,
and the proofs are exactly as in  \cite[Theorem 6.4.1.1]{kai-wen}.

\begin{Thm}\label{Thm:compactify}
The stack $\mathcal{M}^*_{(m,1)}$ is regular.  It is proper and flat over $\co_\kk$
of relative dimension $m$,  is smooth over $\co_\kk[1/d_\kk]$, and contains
$\mathcal{M}_{(m,1)}$  as an open dense substack.  The boundary
\[
\partial\mathcal{M}^*_{(m,1)} =  \mathcal{M}^*_{(m,1)}\smallsetminus  \mathcal{M}_{(m,1)}
\]
is a smooth divisor,   isomorphic to  the stack $\bigsqcup_\mathfrak{n} \mu(\kk)\backslash Z_\mathfrak{n}$ 
of (\ref{the boundary}), and the formal 
completion of $\mathcal{M}^*_{(m,1)}$ along $\partial\mathcal{M}^*_{(m,1)} $
is isomorphic to the formal completion of $\bigsqcup_\mathfrak{n} \mu(\kk)\backslash X_\mathfrak{n}$ along 
$\bigsqcup_\mathfrak{n} \mu(\kk)\backslash Z_\mathfrak{n}$.

The universal abelian scheme with $\co_\kk$-action  over
 $\mathcal{M}_{(m,1)}$   extends to a  semi-abelian scheme with $\co_\kk$-action,
 $G$, over  $\mathcal{M}^*_{(m,1)}$. At a closed geometric point $\Spec(\F) \to \partial\mathcal{M}^*_{(m,1)}$ 
 of the boundary,  the semi-abelian scheme $G$ is an  extension
\begin{equation}\label{ext boundary}
0 \to T \to G_{/\F} \to B\to 0
\end{equation}
of an abelian scheme $B$ by a torus. The character group of $T$ is a projective 
$\co_\kk$-module of rank one, and $B$ extends in a canonical way 
to a triple $(B,\kappa,\psi) \in \mathcal{M}_{(m-1,0)}(\F)$.

The universal subsheaf $\mathcal{F}\subset \Lie(A)$ 
 extends canonically to a subsheaf $\mathcal{F}\subset \Lie(G)$, which again satisfies 
 Kr\"amer's conditions.   On the complement of the divisor
 $\rho^{-1}(\mathrm{Sing}) \subset  \mathcal{M}^*_{(m,1)}$, the subsheaf $\mathcal{F}$ is the
 kernel of  $\bm{j} : \Lie(G) \to \Lie(G)$.
\end{Thm}


\subsection{Galois action on boundary components}
\label{ss:boundary labels}


Recall from Section \ref{ss:good models}
that  $\mathcal{M}_{(m-1,0)}(\C)$ has finitely
many isomorphism classes of objects, which are indexed by isomorphism classes of
self dual Hermitian lattices  $(\mathfrak{B},h_\mathfrak{B})$  of signature $(m-1,0)$.  
Under this correspondence
\begin{equation}\label{hida labels}
(\mathfrak{B},h_\mathfrak{B}) \mapsto (\mathfrak{B} \otimes_{\co_\kk} \C) /\mathfrak{B}
\end{equation}
where the complex torus on the right hand side is 
equipped with its obvious action of $\co_\kk$, and with the principal polarization induced by 
the symplectic form (\ref{herm to pol}).

\begin{Def}\label{Def:cusp label}
A \emph{cusp label} is an isomorphism class of pairs $(\mathfrak{n} , \mathfrak{B})$ in which 
$\mathfrak{n}$ is a projective $\co_\kk$-module of rank one, and $\mathfrak{B}$ is a self dual 
Hermitian  lattice of signature $(m-1,0)$.
\end{Def}

There is a   bijection between  the connected components of $\partial\mathcal{M}^*_{(m,1)}(\C)$ and the cusp labels
$(\mathfrak{n},\mathfrak{B})$.  As $z$ varies over the  component of $\partial\mathcal{M}^*_{(m,1)}(\C)$ 
indexed by $(\mathfrak{n},\mathfrak{B})$ the extensions (\ref{ext boundary}) take the form
\begin{equation}\label{cm degeneration}
0 \to T_{\mathfrak{n}/\C}  \to G_z \to  (\mathfrak{B} \otimes_{\co_\kk} \C) /\mathfrak{B}  \to 0.
\end{equation}

  For an $\co_\kk$-module $M$, write $M_\kk=M\otimes_{\co_\kk} \kk$.
Given a fractional $\co_\kk$-ideal $\mathfrak{s}$ and a positive definite self dual
Hermitian lattice $\mathfrak{B}$, we obtain a  new positive definite self dual
Hermitian lattice $\mathfrak{B}\otimes \mathfrak{s}^{-1}$.  
The new Hermitian form is 
\[
h_{\mathfrak{B}\otimes \mathfrak{s}^{-1}}( x,y) = \mathrm{Nm}(\mathfrak{s})
\cdot h_\mathfrak{B}( x,y),
\]
where we  identify $ ( \mathfrak{B}\otimes\mathfrak{s}^{-1})_\kk \iso \mathfrak{B}_\kk$
using the obvious isomorphism $(\mathfrak{s}^{-1})_\kk \iso \kk$.
Let $H$ be the Hilbert class field of $\kk$, $\mathrm{Cl}_\kk$ the ideal class group, and 
\[
\mathrm{rec}_\kk : \mathrm{Cl}_\kk \iso \Gal(H/\kk)
\] 
the reciprocity map of  class field theory.

\begin{Prop}\label{Prop:galois bnd action}
The action of $\Gal(\kk^\alg/ \kk)$ on the  components of $\partial\mathcal{M}^*_{(m,1)/\kk^\alg}$
factors through $\Gal(H/\kk)$. 
 For any  $\mathfrak{s}\in \mathrm{Cl}_\kk$, the Galois automorphism $\mathrm{rec}_\kk(\mathfrak{s})$
carries the irreducible component indexed by $(\mathfrak{n},\mathfrak{B})$ to the 
component indexed by $(\mathfrak{n},\mathfrak{B}\otimes\mathfrak{s}^{-1})$.
\end{Prop}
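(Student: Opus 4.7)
The plan is to reduce the proposition to a statement about the Galois action on the finite groupoid $\mathtt{M}_{(m-1,0)}(\kk^\alg)$ and then invoke the classical main theorem of complex multiplication.

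First I would observe that the decomposition $\mathtt{Z} = \bigsqcup_\mathfrak{n} Z_\mathfrak{n}$ is defined over $\co_\kk$, so the $\co_\kk$-module type $\mathfrak{n}$ attached to a component is automatically $\Gal(\kk^\alg/\kk)$-invariant. Each $Z_\mathfrak{n}$ was constructed in Section \ref{ss:boundary labels} as a smooth and proper stack over $\mathtt{M}_{(m-1,0)}$ whose fiber at a geometric point $(B,\kappa,\pol)$ is the $\co_\kk$-module Hom scheme $\Hom_{\co_\kk}(\underline{\mathfrak{n}}, B^\vee)$, which is a twist of $B^\vee$ and in particular is geometrically connected. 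Hence the structure map induces a $\Gal(\kk^\alg/\kk)$-equivariant bijection between the geometric connected components of $Z_{\mathfrak{n}/\kk^\alg}$ and the isomorphism classes of $\mathtt{M}_{(m-1,0)}(\kk^\alg)$. This reduces the proposition to showing that the Galois action on $\mathtt{M}_{(m-1,0)}(\kk^\alg)$ factors through $\Gal(H/\kk)$ and that $\mathrm{rec}_\kk(\mathfrak{s})$ sends the point with Hermitian lattice $\mathfrak{B}$ to the point with Hermitian lattice $\mathfrak{B}\otimes\mathfrak{s}^{-1}$.

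For the reduced claim, I would use (\ref{hida labels}) to identify a complex point labeled by $\mathfrak{B}$ with the $\co_\kk$-linearly polarized abelian variety $B_\mathfrak{B}=\mathfrak{B}_\C/\mathfrak{B}$. Choosing an $\co_\kk$-module decomposition $\mathfrak{B}\iso\bigoplus_i \mathfrak{a}_i$ into fractional $\co_\kk$-ideals yields an $\co_\kk$-linear (though not polarization-preserving) isomorphism $B_\mathfrak{B}\iso \prod_i \C/\mathfrak{a}_i$ exhibiting $B_\mathfrak{B}$ as isogenous to a product of CM elliptic curves. The classical main theorem of complex multiplication applied to each factor asserts that for $\sigma\in \Gal(\kk^\alg/\kk)$ restricting to $\mathrm{rec}_\kk(\mathfrak{s})$ on $H$, there is an $\co_\kk$-linear isomorphism $(\C/\mathfrak{a}_i)^\sigma \iso \C/(\mathfrak{s}^{-1}\mathfrak{a}_i)$. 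Taking the product produces an $\co_\kk$-linear isomorphism $B_\mathfrak{B}^\sigma \iso B_{\mathfrak{s}^{-1}\mathfrak{B}}$ of abelian varieties; in particular $\sigma$ acts trivially whenever $\mathfrak{s}$ is principal, forcing the action to factor through $\Gal(H/\kk)$.

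The last and most delicate step is to identify the principal polarization on $B_\mathfrak{B}^\sigma$, transported along the isomorphism $B_\mathfrak{B}^\sigma\iso B_{\mathfrak{s}^{-1}\mathfrak{B}}$ produced above, with the one attached via (\ref{herm to pol}) to the Hermitian form $\mathrm{Nm}(\mathfrak{s})\cdot h_\mathfrak{B}$ on $\mathfrak{s}^{-1}\mathfrak{B}$. Under the tautological identification $(\mathfrak{s}^{-1}\mathfrak{B})_\kk \iso \mathfrak{B}_\kk$, the factor $\mathrm{Nm}(\mathfrak{s})$ is exactly what is needed to restore the self-duality of $\mathfrak{s}^{-1}\mathfrak{B}$, as is visible from $\mathfrak{s}\overline{\mathfrak{s}} = \mathrm{Nm}(\mathfrak{s})\co_\kk$, so that the two self dual Hermitian lattices in play must agree up to the required scaling. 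The hard part will be this last bookkeeping: tracking the polarization through Galois conjugation and confirming that the normalization factor $\mathrm{Nm}(\mathfrak{s})$ appears with the correct sign and power, using the compatibility between the main theorem of CM and the dictionary (\ref{hida labels})--(\ref{herm to pol}) between Hermitian lattices and polarized $\co_\kk$-linear abelian varieties of signature $(m-1,0)$.
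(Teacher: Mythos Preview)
Your proposal is correct and follows essentially the same route as the paper: reduce to the Galois action on the isomorphism classes of $\mathtt{M}_{(m-1,0)}(\kk^\alg)$, and then invoke the main theorem of complex multiplication (the paper outsources this step to the reference \cite{zavosh}). Your reduction via the geometric connectedness of the fibers of $Z_\mathfrak{n}\to\mathtt{M}_{(m-1,0)}$ is cleaner than the paper's one-line appeal to applying $\sigma$ throughout the extension (\ref{cm degeneration}), but the content is the same; likewise your honest flagging of the polarization bookkeeping as the delicate point matches exactly what the paper defers to \cite{zavosh}.
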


\begin{proof}
The Galois automorphism
\[
 \mathcal{M}_{(m-1,0)}(\C) \map{\sigma} \mathcal{M}_{(m-1,0)}(\C)
\]
corresponds, under the bijection (\ref{hida labels}),  to 
\[
(\mathfrak{B},h_\mathfrak{B}) \mapsto 
(\mathfrak{B} \otimes \mathfrak{s}^{-1} ,h_{ \mathfrak{B} \otimes\mathfrak{s}^{-1} } ),
\]
 where $\mathfrak{s}$  is any fractional ideal representing the image of $\sigma$ under
\[
\Aut(\C/\kk) \to \Gal(H/\kk) \iso \mathrm{Cl}_\kk.
\]
Of course this is identical to the formula for the Galois action on elliptic curves with complex multiplication;
the details of the proof  can be found  in \cite{zavosh}.
 Both claims now follow by applying   $\sigma$ throughout (\ref{cm degeneration}).
 \end{proof}

There is an alternate way to parametrize the components of  $\partial\mathcal{M}^*_{(m,1)}(\C)$. 
Suppose we start with a pair $\mathfrak{m} \subset \mathfrak{A}$ in which $\mathfrak{A}$ is a self dual Hermitian
lattice of signature $(m,1)$  and $\mathfrak{m}$ is an  isotropic direct summand of rank one.  
A \emph{normal decomposition} of $\mathfrak{m} \subset \mathfrak{A}$ is an $\co_\kk$-module direct 
sum decomposition 
\[
\mathfrak{A} = \mathfrak{m} \oplus \mathfrak{B} \oplus \mathfrak{n}
\]
in which  $\mathfrak{n}$ is an isotropic direct summand of rank one, and
$\mathfrak{B} = ( \mathfrak{m}\oplus \mathfrak{n} )^\perp$.  The Hermitian form on $\mathfrak{A}$ 
restricts to a perfect  pairing  $\mathfrak{m} \times \mathfrak{n}\to \co_\kk$, and makes  
$\mathfrak{B} \iso  \mathfrak{m} ^\perp / \mathfrak{m}$   into a positive
definite self dual Hermitian lattice of signature $(m-1,0)$.

\begin{Prop}\label{Prop:normal decomp}
Every pair $\mathfrak{m} \subset \mathfrak{A}$ as above  admits a  normal decomposition.  The 
rule  
\[
\mathfrak{m} \subset \mathfrak{A} \mapsto (  \mathfrak{A}/\mathfrak{m}^\perp  ,  \mathfrak{m}^\perp/\mathfrak{m}  )  
\iso  (\mathfrak{n},\mathfrak{B})
\] 
establishes a bijection between 
the isomorphism classes of pairs $\mathfrak{m} \subset \mathfrak{A}$ as above, and the set of cusp labels.
\end{Prop}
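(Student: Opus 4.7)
The statement has two parts: existence of a normal decomposition for any such pair, and the bijection with the set of cusp labels. The bijection will follow routinely from existence, so the main work is the former. To begin the existence proof, observe that since $\mathfrak{A}$ is self-dual and $\mathfrak{m}$ is an isotropic direct summand, the Hermitian pairing identifies $\mathfrak{A}/\mathfrak{m}^\perp$ with $\mathfrak{m}^*$ and descends to a self-dual Hermitian form on $\mathfrak{B} := \mathfrak{m}^\perp/\mathfrak{m}$ of signature $(m-1, 0)$.  Splitting the two short exact sequences
\[
0 \to \mathfrak{m} \to \mathfrak{m}^\perp \to \mathfrak{B} \to 0, \qquad 0 \to \mathfrak{m}^\perp \to \mathfrak{A} \to \mathfrak{m}^* \to 0
\]
as $\co_\kk$-modules (possible since $\mathfrak{B}$ and $\mathfrak{m}^*$ are projective over the Dedekind domain $\co_\kk$) yields a $\co_\kk$-module decomposition $\mathfrak{A} = \mathfrak{m} \oplus \mathfrak{B}' \oplus \mathfrak{n}_0$ with $\mathfrak{B}' \subset \mathfrak{m}^\perp$ lifting $\mathfrak{B}$ and $\mathfrak{n}_0$ lifting $\mathfrak{m}^*$.

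The plan is then to modify $\mathfrak{n}_0$ within its coset modulo $\mathfrak{m}^\perp$ so that the resulting $\mathfrak{n}$ is isotropic and orthogonal to $\mathfrak{B}'$.  Using self-duality of $\mathfrak{B}'$, there is a unique $\co_\kk$-linear $b : \mathfrak{n}_0 \to \mathfrak{B}'$ whose graph $\{n_0 + b(n_0)\}$ is orthogonal to $\mathfrak{B}'$, which handles the orthogonality condition.  To then achieve isotropy, we further adjust by a $\co_\kk$-linear map $a : \mathfrak{n}_0 \to \mathfrak{m}$; unwinding the condition that the shifted lattice be isotropic reduces the problem to solving $\mathrm{tr}_{\kk/\Q}(\lambda) = n$ for $\lambda \in \co_\kk$ and any given $n \in \Z$.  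This integrality is where the proof uses our standing assumption that $d_\kk$ is odd: it forces $\co_\kk = \Z[\omega]$ with $\omega = (1+\delta_\kk)/2$ satisfying $\mathrm{tr}_{\kk/\Q}(\omega) = 1$, so the equation is solved by $\lambda = n\omega$.  This integrality step is the technical heart and the only nontrivial obstacle in the proof.

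For the bijection, the forward map $(\mathfrak{m} \subset \mathfrak{A}) \mapsto (\mathfrak{A}/\mathfrak{m}^\perp, \mathfrak{m}^\perp/\mathfrak{m})$ is well-defined on isomorphism classes by functoriality.  To construct an inverse, given a cusp label $(\mathfrak{n}, \mathfrak{B})$, set $\mathfrak{A} := \mathfrak{n}^* \oplus \mathfrak{B} \oplus \mathfrak{n}$ equipped with the orthogonal sum of the Hermitian form on $\mathfrak{B}$ and the hyperbolic form on $\mathfrak{n}^* \oplus \mathfrak{n}$ induced by the evaluation pairing, and take $\mathfrak{m} := \mathfrak{n}^*$.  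Self-duality, signature $(m,1)$, and the property of $\mathfrak{m}$ being an isotropic direct summand are all immediate from the construction.  That these two maps are mutually inverse is then a direct computation on cusp labels in one direction, and uses the existence of a normal decomposition in the other direction (which identifies any pair $(\mathfrak{m} \subset \mathfrak{A})$ with the hyperbolic construction from its associated cusp label).
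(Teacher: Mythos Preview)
Your argument is correct and follows the same line as the paper's: split off a rank-one complement to $\mathfrak{m}^\perp$, then translate it inside its $\mathfrak{m}^\perp$-coset to make it isotropic, the key input being surjectivity of $\mathrm{Tr}:\co_\kk\to\Z$ from the odd-discriminant hypothesis. The paper streamlines one step by defining $\mathfrak{B}=(\mathfrak{m}\oplus\mathfrak{n})^\perp$ only \emph{after} $\mathfrak{n}$ is made isotropic (so your separate orthogonality-to-$\mathfrak{B}'$ adjustment is unnecessary), and in the inverse construction it takes $\mathfrak{m}$ to be the \emph{conjugate-linear} dual of $\mathfrak{n}$ so that the evaluation pairing is genuinely Hermitian---this is what your $\mathfrak{n}^*$ must mean for the hyperbolic form to be sesquilinear.
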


\begin{proof}
Denote by $\langle\cdot,\cdot\rangle$ the Hermitian form on $\mathfrak{A}$.
It is easy to see that $\mathfrak{m}^\perp$ is a projective $\co_\kk$-module of rank $m$, and that
$\mathfrak{A}/\mathfrak{m}^\perp$ is projective of rank $1$.  Hence 
\[
\mathfrak{A}=\mathfrak{m}^\perp\oplus \mathfrak{n}
\]
for  some rank one direct summand $\mathfrak{n} \subset \mathfrak{A}$.   In particular
$\langle \mathfrak{m} , \mathfrak{n} \rangle\not=0$.  We now modify $\mathfrak{n}$ to 
make it isotropic.
Fix  $e\in \mathfrak{m}_\kk$ and $e'\in \mathfrak{n}_\kk$ such that  
$\langle e,e'\rangle=1$.    There are fractional $\co_\kk$-ideals $\mathfrak{n}_0$ and
$\mathfrak{m}_0$ such that $\mathfrak{m}=\mathfrak{m}_0 e$ and 
$\mathfrak{n}=\mathfrak{n}_0 e'$.  The Hermitian 
form on $\mathfrak{A}$ restricts to a perfect pairing between $\mathfrak{n}$ and $\mathfrak{m}$,
and hence  $\overline{ \mathfrak{m}}_0 \mathfrak{n}_0 =\co_\kk$.

\emph{Because we assume that $d_\kk$ is odd}, the trace map $\co_\kk \to \Z$ is surjective.
It is easy to see that $\langle e',e'\rangle \in \mathrm{N}(\mathfrak{m}_0) \Z$, and therefore
we may choose an $x\in \mathrm{N}( \mathfrak{m}_0) \co_\kk$ with 
$x+\overline{x}  = - \langle e',e'\rangle$.
Now replace $\mathfrak{n}$ by  $\mathfrak{n}_0 ( xe+e')$.  It is still true that 
$\mathfrak{A}=\mathfrak{m}^\perp \oplus \mathfrak{n}$, but now $\mathfrak{n}$ is isotropic.  
Defining $\mathfrak{B}=( \mathfrak{m} \oplus \mathfrak{n})^\perp$  gives the desired normal 
decomposition.

For the second claim, start with a cusp label $(\mathfrak{n},\mathfrak{B})$.  Let $\mathfrak{m}$
be the set of $\co_\kk$-conjugate linear maps $\mathfrak{n} \to \co_\kk$, and let $\co_\kk$ act
on $\mathfrak{m}$ by $(x\cdot \mu)(\nu)=x\cdot \mu(\nu)$.
Define a Hermitian form on $\mathfrak{m} \oplus \mathfrak{n}$ by 
$\langle \mu_1+ \nu_1 , \mu_2+ \nu_2  \rangle = \mu_1(\nu_2) + \overline{ \mu_2(\nu_1)}$, and make
$\mathfrak{A}=\mathfrak{m}\oplus \mathfrak{B} \oplus \mathfrak{n}$ into a Hermitian lattice in the 
obvious way, with $\mathfrak{B} \perp (\mathfrak{m} \oplus\mathfrak{n})$.
The construction $(\mathfrak{n},\mathfrak{B}) \mapsto  \mathfrak{m} \subset \mathfrak{A}$ is
surjective, by the existence of normal decompositions, and 
is  easily seen to give an inverse to the map in the statement of the proposition.
\end{proof}

\begin{Rem}
Proposition \ref{Prop:normal decomp} is false without the hypothesis that $d_\kk$ is odd.
For example, endow $\mathfrak{A}=\co_\kk\oplus\co_\kk$ with the signature $(1,1)$ Hermitian form
\[
\langle x,y\rangle = {}^t x  \left[\begin{matrix} 
0 & 1 \\
1 & 1
\end{matrix}\right]  \overline{y},
\]
and let 
\[
\mathfrak{m} = \co_\kk\cdot \left[\begin{matrix}
1 \\ 0
\end{matrix}\right].
\]
If $d_\kk$ is even, the pair $\mathfrak{m} \subset \mathfrak{A}$ does not admit a normal decomposition.
\end{Rem}


\section{Arithmetic divisors}


Fix a positive integer $n$  and consider the  flat and regular $\co_\kk$-stack 
\[
\mathcal{M}  =  \mathcal{M}_{ (1,0)  } \times_{\co_\kk} \mathcal{M}_{(n-1,1)  }
\]
of (absolute) dimension $n$,  and its flat and regular compactification
\begin{equation*}
\mathcal{M}^* = \mathcal{M}_{(1,0) } \times_{\co_\kk} \mathcal{M}^*_{(n-1,1)}
\end{equation*}
 constructed in Section \ref{s:integral models}.  The boundary
 $\partial\mathcal{M}^* = \mathcal{M}^* \smallsetminus \mathcal{M}$
 (with its reduced substack structure)  is a divisor on $\mathcal{M}^*$, proper and smooth over $\co_\kk$.

The stack $\mathcal{M}$ is the moduli stack of septuples 
$(A_0,\kappa_0,\psi_0,A,\kappa,\psi,\mathcal{F})$
in which $A_0 \to S$ is an elliptic curve over an $\co_\kk$-scheme, 
$\psi_0$ is its  principal polarization,
$\kappa_0:\co_\kk\to \End(A_0)$ is an action of $\co_\kk$ on $A_0$ whose induced action 
on $\Lie(A_0)$ is through the structure morphism $i_S:\co_\kk \to \co_S$, $A$
is an abelian scheme over $S$, $\kappa:\co_\kk\to \End(A)$ is an action of $\co_\kk$ on $A$,
$\psi$ is an $\co_\kk$-linear principal polarization of $A$, and $\mathcal{F}\subset \Lie(A)$
satisfies Kr\"amer's conditions. For simplicity
we will usually shorten such a septuple simply to $(A_0,A) \in \mathcal{M}(S)$.

 We will construct  special 
 divisors $\mathcal{Z}(m)$ on $\mathcal{M}$.  These divisors  come equipped with 
 natural Green  functions, and we will analyze the behavior of these Green functions 
 near the boundary of  $\mathcal{M}^*$.  The goal is  to show that   adding a
 particular  $\R$-linear combination of  boundary components  to $\mathcal{Z}(m)$ 
  yields  a class in the arithmetic Chow group  $\widehat{\mathrm{CH}}_\R^1(\mathcal{M}^*)$.


\subsection{Arithmetic Chow groups}
\label{ss:BKK}


The work of Gillet and Soul\'e 
\cite{BGS,gillet-soule90,soule92} gives us, for any flat, regular, and proper $\co_\kk$-scheme 
of finite type, a theory of   \emph{arithmetic Chow groups} formed from  cycles equipped with Green currents,
up to  a suitable notion of rational equivalence.
Burgos-Kramer-K\"uhn \cite{bruinier-burgos-kuhn,BKK} extended this theory  by 
allowing the use of  Green currents with  certain mild \emph{log-log} singularities 
along a fixed normal crossing  divisor.  The original Gillet-Soul\'e theory  was extended 
from schemes to algebraic stacks by Gillet \cite{gillet09}, but that work does not 
allow for the log-log singularities covered in \cite{BKK}.
We will be working with  Green functions on the stack $\mathcal{M}^*$  
with  log-log singularities along the boundary,  so we need a 
common generalization of \cite{gillet09} and \cite{BKK}.  Doing this in full generality
is a task best left to the experts, so we will develop in an ad hoc way only the 
minimal theory we need.

\begin{Def}
Suppose $M^*$ is a complex manifold of dimension $n-1$,  $\partial M^*\subset M^*$ is a smooth 
codimension one submanifold,  $M=M^*\smallsetminus \partial M^*$,
and $z_0\in \partial M^*$.  On some open neighborhood $V\subset M^*$ of $z_0$ there are
coordinates $q,u_1,\ldots,u_{n-2}$ such that $\partial M^*$ is defined by the equation $q=0$.
After shrinking $V$ we may always assume that $\log|q^{-1}| >1$ on $V$.
The open set $V$ and its coordinates are then said to be \emph{adapted to $\partial M^*$}.
\end{Def}

\begin{Def}
Suppose  $f$ is a  $C^\infty$ function on an 
open subset   $U\subset M$.  We say that 
$f$ has \emph{log-log growth along $\partial M^*$} if around any point of $\partial M^*$ there is an 
open neighborhood $V\subset M^*$ and coordinates  $q,u_1,\ldots,u_{n-2}$ adapted to $\partial M^*$ such that 
\begin{equation}\label{log-log def}
f =O ( \log \log|q^{-1} | )
\end{equation}
on $U\cap V$.
A smooth differential form  $\omega$ on $U$ has \emph{log-log growth along $\partial M^*$}
if around any point of $\partial M^*$ there is an  open neighborhood $V\subset M^*$ and coordinates  
$q,u_1,\ldots,u_{n-2}$ adapted to $\partial M^*$ such that $\omega|_{U \cap V}$ lies in the 
subring (of the ring of all smooth forms on $U\cap V$) generated by 
\begin{align*}
\frac{d q}{ q \log|q| }, \frac{d \overline{q}}{ \overline{q} \log|q| }, 
d u_1, \ldots, d u_{n-2} , d\overline{u}_1, \ldots,  d\overline{u}_{n-2},
\end{align*}
and the functions satisfying (\ref{log-log def}).
\end{Def}

\begin{Rem}
Our definition of log-log growth is slightly stronger than that  of \cite{BKK}.
What we call log-log growth, those authors call \emph{Poincar\'e growth} \cite[Section 7.1]{BKK}.
\end{Rem}

\begin{Rem}
A smooth function $f$ on an open subset of $M$
is a  \emph{pre-log-log form} if $f$, $\partial f$, $\overline{\partial} f$, and 
$\partial \overline{\partial} f$ all have log-log growth along $\partial M^*$.  We will 
not use this terminology, but mention it  for ease of comparison with \cite{BKK},
where this term is used routinely.
\end{Rem}

The notion of log-log growth can be extended from complex manifolds to the 
orbifold fibers of $\mathcal{M}^*$ in the  following way.  
It is always possible to write  $\mathcal{M}^*(\C)$ as the 
quotient of a complex manifold $M^*$ by the action of a finite group $H$.    For example,
by (as in \cite{kai-wen, kai-wen_2})  
adding level structure to the moduli problem defining $\mathcal{M}(\C)$
and then compactifying the result.
In any case, the morphisms of complex orbifolds
\[
 \partial\mathcal{M}^*(\C)  \to \mathcal{M}^*(\C) \leftarrow  \mathcal{M}(\C)
\]
arise as the quotients  of $H$-invariant morphisms of complex manifolds
\[
\partial M^* \to M^* \leftarrow M
\]
with $H$-actions.  A smooth form on the orbifold 
$\mathcal{M}(\C)$ pulls back to  a smooth $H$-invariant form on
the complex manifold $M$, and is said to have
 \emph{log-log growth} along the boundary $\partial\mathcal{M}^*(\C)$
if the corresponding $H$-invariant form on $M$ has log-log growth along $\partial M^*$.

Let $\mathcal{Z}$ be a divisor on $\mathcal{M}^*$ with real coefficients, and write
$\mathcal{Z} =\sum m_i \mathcal{Z}_i$
as a finite $\R$-linear combination of  pairwise distinct
irreducible closed substacks of codimension one.
We allow the possibility that some $\mathcal{Z}_i$ are  components of the boundary $\partial\mathcal{M}^*$.
In our applications such boundary components  will appear
with real multiplicities, while the non-boundary $\mathcal{Z}_i$'s  will have
integer multiplicities.

\begin{Def}
A \emph{Green function} for $\mathcal{Z}$ consists of a smooth real-valued function
 $\Gr(\mathcal{Z},\cdot)$ on
$
\mathcal{M}(\C)  \smallsetminus  \mathcal{Z} (\C) .
$
satisfying the following properties:
\begin{enumerate}
\item
The function $\Gr(\mathcal{Z} ,z)$ has a logarithmic singularity
along $\mathcal{Z}$ in the following sense:  around every  point of $\mathcal{M}^*(\C)$ there is an
open neighborhood $V$ and local equations $\psi_i(z)=0$ for the divisors $\mathcal{Z}_i(\C)$
such that the function 
\[
\mathcal{E}(z)= \Gr(\mathcal{Z} ,z) + \sum_i m_i \log |\psi_i(z) |^2
\]
on $V \cap \big( \mathcal{M}(\C) \smallsetminus  \mathcal{Z}(\C) \big)$
extends smoothly to $V\cap \mathcal{M}(\C)$.
\item
The  forms $\mathcal{E}$, $\partial\mathcal{E}$, $\overline{\partial}\mathcal{E}$, and 
$\partial \overline{\partial}\mathcal{E}$  on $V\cap \mathcal{M}(\C)$ have log-log growth along $\partial\mathcal{M}^*(\C)$.
\end{enumerate}
\end{Def}

As a simple example, if $f$ is any rational function on $\mathcal{M}^*$ then $- \log|f|^2$
is a  Green function for the divisor $\mathrm{div}(f)$.

\begin{Rem}
As in \cite{mumford77} or \cite[Proposition 7.6]{BKK},
the log-log growth of $\mathcal{E}$ and of
\[
\partial \overline{\partial} \mathcal{E} = -2\pi i \cdot dd^c \mathcal{E}
\] 
imply that both $\mathcal{E}$ and $dd^c \mathcal{E}$ are locally integrable, and so define
currents on $ \mathcal{M}^*(\C)$.   The log-log growth conditions further imply the equality
of currents $[dd^c \mathcal{E}] = dd^c [\mathcal{E}]$, from which one  deduces the 
Green equation
\[
 [dd^c\mathcal{E}] = dd^c [\Gr(\mathcal{Z},\cdot )] + \delta_{ \mathcal{Z} } 
\]
of \cite[Definition 1.2.3]{gillet-soule90}. 
\end{Rem}

\begin{Def}
An \emph{arithmetic divisor} on $\mathcal{M}^*$ is a pair $(\mathcal{Z},\Gr(\mathcal{Z},\cdot))$
consisting of a divisor $\mathcal{Z}$ on $\mathcal{M}^*$ with real coefficients, 
and a Green function for $\mathcal{Z}$.  
A \emph{principal arithmetic divisor} is 
an arithmetic divisor of the form 
\[
\widehat{\mathrm{div}}(f) = (\mathrm{div}(f) , -\log|f|^2)
\] 
for some rational function $f$ on $\mathcal{M}^*$.  
The \emph{codimension one arithmetic Chow group}
$\widehat{\mathrm{CH}}^1_\R(\mathcal{M}^*)$ is the quotient of the 
$\R$-vector space of all arithmetic divisors by the $\R$-span of the principal
arithmetic divisors.
\end{Def}

Let  $\mathcal{X}$ be a regular algebraic stack,  finite and flat over $\co_\kk$.  In particular
$\mathcal{X}$ has dimension one.  The stack 
$\mathcal{X}$ has an arithmetic Chow group $\widehat{\mathrm{CH}}_\R^1(\mathcal{X})$  defined 
exactly as for $\mathcal{M}^*$ (taking $\mathcal{X}=\mathcal{X}^*$ and $\partial\mathcal{X}^*=\emptyset$, as the finiteness of $\mathcal{X}$ implies
that it is already proper over $\co_\kk$).  
 Of course the study of Green functions for divisors on $\mathcal{X}$
is simplified by the fact that $\mathcal{X}$ has all of its divisors are supported in nonzero
characteristic: a Green function for a divisor $\mathcal{Z}$ on $\mathcal{X}$ is simply any
 function on the $0$-dimensional orbifold
$\mathcal{X}(\C)$.

Now suppose $\mathcal{X}$ is equipped with a representable morphism $\pi:\mathcal{X}\to\mathcal{M}$.
The finiteness of $\mathcal{X}$ over $\co_\kk$ implies that $\pi$ is a proper map.  
In particular, the Zariski closure of the image of $\mathcal{X}$ in
$\mathcal{M}^*$ does not meet the boundary. We will construct a pullback map 
\[
\pi^* : \widehat{\mathrm{CH}}^1_\R(\mathcal{M}^*) \to \widehat{\mathrm{CH}}^1_\R(\mathcal{X}).
\]
Suppose  $\mathcal{Z}$ is an irreducible divisor on $\mathcal{M}^*$ intersecting 
$\mathcal{X}$ properly, in the sense that
\[
\mathcal{X} \cap \mathcal{Z} = \mathcal{X} \times_{\mathcal{M}^*} \mathcal{Z}
\] 
has dimension $0$.    Of course this is equivalent to 
$\mathcal{X}$ and $\mathcal{Z}$ having empty intersection in the generic fiber of $\mathcal{M}^*$.
The   \emph{Serre intersection multiplicity} 
at a geometric point $z \in (\mathcal{X} \cap \mathcal{Z})(k)$  is defined by
\[
I^\serre_z (\mathcal{Z} : \mathcal{X}) = 
 \sum_{ \ell\ge 0 }  (-1)^\ell 
\mathrm{length}_{ \co_{\mathcal{X} \cap \mathcal{Z} ,z}  }   \mathrm{Tor}_\ell^{  \co_{\mathcal{M},z}   }  
\big( 
\co_{\mathcal{X},z} , \co_{\mathcal{Z} ,z }
 \big),
\]
where all local rings are for the \'etale topology.    From $z$ we may construct 
a divisor $[z]$ on $\mathcal{X}$  as follows.  Fix an \'etale presentation 
$X\to \mathcal{X}$ with $X$ a scheme.  The fiber product $\tilde{z} = X \times_\mathcal{X} z$ is
a scheme, and is finite \'etale over $z=\Spec(k)$.  Thus $\tilde{z}$ is a disjoint union of 
copies of $\Spec(k)$, say $\tilde{z}=\bigsqcup z_i$, where each $z_i$ is a geometric point of 
$X$.  Let $[z_i]$ denote the image of $z_i:\Spec(k) \to X$, so that $[z_i]$ is a closed point of 
$X$.  Then $[\tilde{z}] = \sum [z_i]$ is a divisor on $X$, and descends uniquely to a divisor on $\mathcal{X}$
denoted $[z]$.    Define a divisor on $\mathcal{X}$
\[
\pi^*\mathcal{Z} = \sum_{ z \in |\mathcal{X} \cap \mathcal{Z}| } I_z^\serre(\mathcal{Z}:\mathcal{X})  \cdot [z].
\]
Here $|\mathcal{X} \cap \mathcal{Z}|$ is the topological space underlying the
$\co_\kk$-stack $\mathcal{X} \cap \mathcal{Z}$, in the sense of \cite[Chapter 5]{laumon}.  
Each point $z\in |\mathcal{X} \cap \mathcal{Z}|$ is, by definition,  an equivalence class of maps 
$z:\Spec(k) \to \mathcal{X} \cap \mathcal{Z}$ with $k$ a field, 
and we may always choose a representative of this equivalence class
for which $k$ is algebraically closed.
Extend the  definition of $\pi^*\mathcal{Z}$ linearly to all divisors $\mathcal{Z}$ with real coefficients whose 
support meets $\mathcal{X}$ properly.
If  $\Gr(\mathcal{Z},\cdot)$ is a Green function for  such a divisor $\mathcal{Z}$, then
the image of the orbifold morphism $\mathcal{X}(\C) \to \mathcal{M}^*(\C)$ is disjoint 
from the divisor $\mathcal{Z}(\C)$, as well as from the boundary of $\mathcal{M}^*(\C)$.
 Thus the image of 
$\mathcal{X}(\C) \to \mathcal{M}(\C)$ is disjoint from all singularities of $\Gr(\mathcal{Z},\cdot)$,
and so we may form the pullback  $\pi^* \Gr(\mathcal{Z},\cdot)$ 
in the usual sense.  This defines 
\[
\pi^*\widehat{\mathcal{Z}} = ( \pi^* \mathcal{Z}, \pi^* \Gr(\mathcal{Z},\cdot))
\] 
whenever  $\widehat{\mathcal{Z}} = (\mathcal{Z},\Gr(\mathcal{Z},\cdot))$ with $\mathcal{Z}$ 
intersecting $\mathcal{X}$ properly.  

The argument of  \cite[Theorem III.3.1]{soule92} 
allows one  to extend the definition of $\pi^*$ to all arithmetic divisors.
Briefly, given an arithmetic divisor $\widehat{\mathcal{Z}}=(\mathcal{Z},\Gr(\mathcal{Z},\cdot) )$
one can use Chow's moving lemma \cite{roberts72} (working on an \'etale presentation of 
the generic fiber of $\mathcal{M}^*$)
in order to modify $\widehat{\mathcal{Z}}$
by a principal arithmetic divisor in such a way that the resulting divisor meets $\mathcal{X}$
properly, and  then check that the resulting pullback
does not depend on the choice of principal arithmetic divisor used in the modification.
It may be worth pointing out that there is a gap in the proof of  \cite[Theorem III.3.1]{soule92}, 
identified and corrected by Gubler  \cite{gubler}.  The gap is in the proof of the
``Moving Lemma for $K_1$-chains", but  if one only works with Chow groups of codimension one 
(as we do) then  the use of  this lemma is unnecessary, and there is no gap.

There is a canonical linear functional 
\[
\widehat{\deg}: \widehat{\mathrm{CH}}^1_\R(\mathcal{X}) \to \R,
\]
defined, if $\mathcal{X}$ is a scheme, in \cite{gillet-soule90} as the composition
\[
\widehat{\mathrm{CH}}^1_\R(\mathcal{X}) \to \widehat{\mathrm{CH}}^1_\R(\Spec(\co_\kk) ) \to \R,
\]
where the first arrow is the proper pushforward by the structure morphism $\mathcal{X}\to\Spec(\co_\kk)$,
and the second arrow is defined in \cite[Section 3.4.3]{gillet-soule90}.
The generalization to stacks can be found in \cite{KRY}.  In any case,
\[
\widehat{\deg} (\mathcal{Z},\Gr(\mathcal{Z},\cdot) ) =
\sum_{ \mathfrak{q} \subset \co_\kk }
\sum_{z\in \mathcal{Z}(\F_\mathfrak{q}^\alg) } \frac{ \log ( \mathrm{N}(\mathfrak{q}))}{\#\Aut_\mathcal{X}(z)}
+   \sum_{z\in \mathcal{X}(\C)} 
\frac{  \Gr(\mathcal{Z} , z) }{ \#\Aut_{\mathcal{X}(\C) }(z) } 
\]
whenever $\mathcal{Z}$ is  irreducible.  Here $\F_\mathfrak{q}^\alg$ is an algebraic closure
of the residue field $\co_\kk/\mathfrak{q}$, and $\mathrm{N}(\mathfrak{q}) = \# (\co_\kk/\mathfrak{q})$.

\begin{Def}
Suppose  $\mathcal{X}$ is a regular stack, finite and flat over $\co_\kk$,
and equipped with a morphism $\pi:\mathcal{X} \to \mathcal{M}$.
\emph{Arithmetic intersection against $\mathcal{X}$} is the linear functional
\[
[\cdot : \mathcal{X} ] : \widehat{\mathrm{CH}}^1_\R(\mathcal{M}^*) \to \R
\]
defined as the composition
\[
\widehat{\mathrm{CH}}^1_\R(\mathcal{M}^*) \map{\pi^*} \widehat{\mathrm{CH}}^1_\R(\mathcal{X}) 
  \map{\widehat{\deg}} \R.
\]
\end{Def}

The arithmetic intersection $[\cdot : \mathcal{X}]$  is characterized by 
\begin{equation}\label{proper degree}
[  \widehat{ \mathcal{Z} } : \mathcal{X} ]  = 
  I_\mathrm{fin} (\mathcal{Z} : \mathcal{X})  +
\Gr(\mathcal{Z}, \mathcal{X}) 
\end{equation}
for every arithmetic divisor $\widehat{\mathcal{Z}} = (\mathcal{Z} , \Gr(\mathcal{Z},\cdot) )$ 
on $\mathcal{M}^*$  such that $\mathcal{Z}$ and $\mathcal{X}$ intersect properly, where
\[
I_\mathrm{fin} ( \mathcal{Z} : \mathcal{X}) = 
\sum_{\mathfrak{q} \subset\co_\kk}  
 \sum_{ z\in (\mathcal{X} \cap\mathcal{Z})(\F_\mathfrak{q}^\alg )  }  
\frac{ \log(\mathrm{N}(\mathfrak{q}) ) }{ \# \Aut_\mathcal{X} (z) }   I_z^\serre (\mathcal{Z} : \mathcal{X}) 
\]
and
\[
\Gr(\mathcal{Z}, \mathcal{X}) = \sum_{ z\in \mathcal{X}(\C) }
\frac{ \Gr(\mathcal{Z},z)}{\#\Aut_{\mathcal{X}(\C) }(z) } .
\]


\subsection{Kudla-Rapoport divisors}


The stack $\mathcal{M}$ admits an obvious morphism to
\[
\mathcal{M}^\naive  = \mathcal{M}_{(1,0)} \times_{\co_\kk} \mathcal{M}_{(n-1,1)}^\naive.
\]
This stack is  a moduli space of sextuples $(A_0,\kappa_0,\psi_0,A,\kappa,\psi)$,
but we usually abbreviate such a sextuple to 
\[
(A_0,A) \in \mathcal{M}^\naive(S).
\]
The $\co_\kk$-module $\Hom_{\co_\kk}(A_0,A)$ is equipped with a positive definite
Hermitian form 
\[
\langle f_1,f_2\rangle =  \psi_0^{-1}\circ  f_2^\vee \circ  \psi\circ  f_1 .
\]
The right hand side is an element of $\End_{\co_\kk}(A_0)\iso \co_\kk$.

\begin{Def}
For any $m\not=0$, the \emph{naive Kudla-Rapoport divisor}  
$\mathcal{Z}^\naive(m)$ is the moduli stack of tuples 
$(A_0,A,f)$ over $\co_\kk$-schemes $S$,
  in which
\begin{itemize}
\item
$(A_0,A) \in \mathcal{M}^\naive(S)$
\item
$f\in \Hom_{\co_\kk}(A_0,A)$ satisfies $\langle f , f \rangle =m$.
\end{itemize}
Of course $\mathcal{Z}^\naive(m) =\emptyset$ if $m<0$.  Denote by
\[
\mathcal{Z}(m) = \mathcal{Z}^\naive(m) \times_{\mathcal{M}^\naive } \mathcal{M}
\]
the pullback of $\mathcal{Z}^\naive(m)$ to  $\mathcal{M}$.   
\end{Def}

The morphism  
$\mathcal{Z}(m)\to \mathcal{M}$ is finite and unramified,
and (in light of Proposition \ref{Prop:local divisor} below)
we view $\mathcal{Z}(m)$ as a divisor on $\mathcal{M}$ in the usual way.
To be more precise, $\mathcal{Z}(m)$ is a closed substack of itself, and so 
determines a cycle $[\mathcal{Z}(m)]$ on  $\mathcal{Z}(m)$ by \cite[Definition (3.5)]{vistoli}.
We then pushforward this cycle to a divisor on $\mathcal{M}^*$ using \cite[Definition (3.6)]{vistoli}.

\begin{Def}
For every $m\not=0$ the \emph{Kudla-Rapoport divisor} $\mathcal{Z}^*(m)$
is the Zariski closure of the divisor $\mathcal{Z}(m)$ in the compactification 
$\mathcal{M}^*$ of $\mathcal{M}$. 
\end{Def}

The following result tells us not only that $\mathcal{Z}^*(m)$ has the 
expected dimension, but also, as we shall see in the proof of Theorem \ref{Thm:finite part},
ensures that the higher $\mathrm{Tor}$ terms in the Serre intersection multiplicity
will not contribute to our final formulas.

 \begin{Prop}\label{Prop:local divisor}
 Suppose  $\F$ is an algebraically closed field,
 $z\in \mathcal{Z}(m)(\F)$ is a geometric point, and 
 denote by $R_z$ the completed \'etale local ring of $\mathcal{Z}(m)$ at $z$. 
 Let  $y\in \mathcal{M}(\F)$ be the point below $z$, and denote by $R_y$
 the completed \'etale local ring of $\mathcal{M}$ at $y$.  The natural map $R_y \to R_z$
 is surjective, and the kernel is generated by a single nonzero element.
 
In particular,  the local rings of  $\mathcal{Z}(m)$ are complete intersections, and its irreducible components
all have dimension $n-1$.
\end{Prop}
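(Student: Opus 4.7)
The plan is to analyze the local structure of the forgetful morphism $\KR(m)\to\mathtt{M}$ at $z$ via the deformation theory of homomorphisms of abelian schemes. Write $(A_0,A,\mathcal{F},f)$ for the tuple corresponding to $z$, so that $y$ corresponds to $(A_0,A,\mathcal{F})$ with $f\in\Hom_{\co_\kk}(A_0,A)$ satisfying $\langle f,f\rangle=m$. The first step is to verify that $\KR(m)\to\mathtt{M}$ is finite and unramified: finiteness is clear because, for fixed $(A_0,A)$, the set of $\co_\kk$-linear homs $A_0\to A$ of Hermitian norm $m$ is finite by positive definiteness of $\langle\cdot,\cdot\rangle$, while unramifiedness is the rigidity of homomorphisms of abelian schemes. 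Since $k(y)=k(z)=\geom$, Nakayama then yields surjectivity of $R_y\to R_z$.

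Next, I would interpret $I=\ker(R_y\to R_z)$ as the obstruction ideal to deforming $f$ over the universal deformation of $(A_0,A,\mathcal{F})$. Because $\mathtt{M}_{(1,0)}$ is \'etale over $\co_\kk$ by Proposition \ref{Prop:zero etale}, $A_0$ extends canonically to an abelian scheme $A_0^{\mathrm{univ}}$ over $R_y$. Grothendieck-Messing theory then provides a canonical crystalline lift of $f$ to a $\co_\kk$-linear morphism $\tilde f\colon H_1^{\mathrm{dR}}(A_0^{\mathrm{univ}})\to H_1^{\mathrm{dR}}(A^{\mathrm{univ}})$ of $R_y$-modules, and $f$ descends to a morphism of abelian schemes modulo an ideal $J\subset R_y$ precisely when $\tilde f$ carries $\mathrm{Fil}^1 H_1^{\mathrm{dR}}(A_0^{\mathrm{univ}})$ into $\mathrm{Fil}^1 H_1^{\mathrm{dR}}(A^{\mathrm{univ}})$ modulo $J$. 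Thus $I$ equals the ideal generated by the matrix entries of the composite
\[
\mathrm{ob}\colon\mathrm{Fil}^1 H_1^{\mathrm{dR}}(A_0^{\mathrm{univ}})\xrightarrow{\tilde f} H_1^{\mathrm{dR}}(A^{\mathrm{univ}})\to\Lie(A^{\mathrm{univ}})
\]
after a local trivialization of the source.

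The crux is to show that $\mathrm{ob}$ factors through a morphism of line bundles, and so is given by a single element of $R_y$. The source is a free $R_y$-module of rank one, and the Kr\"amer quotient $\Lie(A^{\mathrm{univ}})/\mathcal{F}^{\mathrm{univ}}$ is a line bundle on $R_y$ by Kr\"amer's conditions. By $\co_\kk$-equivariance of $\tilde f$ and the fact that $\co_\kk$ acts on $\mathrm{Fil}^1 H_1^{\mathrm{dR}}(A_0^{\mathrm{univ}})$ through the conjugate of the structure morphism, the image of $\mathrm{ob}$ lies in the subspace of $\Lie(A^{\mathrm{univ}})$ on which $\co_\kk$ acts through that same conjugate; since $\mathcal{F}^{\mathrm{univ}}$ is the complementary eigenpart, the projection $\Lie(A^{\mathrm{univ}})\to\Lie(A^{\mathrm{univ}})/\mathcal{F}^{\mathrm{univ}}$ is injective on the relevant subspace away from primes dividing $d_\kk$. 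Hence $\mathrm{ob}$ factors as a morphism of line bundles, and is given by a single element $\alpha\in R_y$. The hard part will be ruling out extra independent obstructions at primes dividing $d_\kk$, where the two eigenparts collide; I expect this to follow from the regularity of $R_y$ given by Theorem \ref{Thm:KP}, combined with a direct computation on Kr\"amer's local model aided by the exact sequence (\ref{period exact}).

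To conclude, $\alpha$ must be nonzero, for otherwise $R_z=R_y$ and, on passing to the generic fiber, the formal neighborhoods of $z$ in $\KR(m)_\kk$ and in $\mathtt{M}_\kk$ would agree, contradicting the standard dimension count that $\dim_\kk\mathtt{M}_\kk=n-1$ while $\dim_\kk\KR(m)_\kk=n-2$. Therefore $R_z=R_y/(\alpha)$ with $\alpha\neq 0$, exhibiting $R_z$ as the quotient of the regular $n$-dimensional ring $R_y$ by a single nonzero element; this is a complete intersection of Krull dimension $n-1$, and every irreducible component of $\KR(m)$ inherits dimension $n-1$.
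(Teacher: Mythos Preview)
Your overall framework is correct and matches the paper's: surjectivity of $R_y\to R_z$ via deformation theory, identification of the kernel $I$ with the ideal generated by the obstruction map $\mathrm{ob}\colon\mathrm{Fil}^1 H_1^{\mathrm{dR}}(A_0^{\mathrm{univ}})\to\Lie(A^{\mathrm{univ}})$, and the final nonvanishing via a dimension count on the complex fiber. Your handling of the case $\mathrm{char}(\geom)\nmid d_\kk$ is also essentially the paper's argument in different language: the $\co_\kk$-equivariance of $\mathrm{ob}$ forces its image into the rank-one ``conjugate'' eigenpart of $\Lie(A^{\mathrm{univ}})$, so the obstruction is given by a single element.

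The genuine gap is at primes dividing $d_\kk$, where you write only that you ``expect this to follow from the regularity of $R_y$\ldots combined with a direct computation on Kr\"amer's local model.'' Regularity of $R_y$ tells you nothing about whether a given ideal in $R_y$ is principal, and the exact sequence (\ref{period exact}) is too coarse an ingredient. Concretely, in the paper's coordinates the obstruction ideal has the form $x_n\cdot(j_1,\ldots,j_n)$, where the $j_i$ are the entries of the matrix of $\bm{j}$ on $\Lie(A^{\mathrm{univ}})$; when $\bm{j}\cdot\Lie(A)=0$ all $j_i$ lie in the maximal ideal, and there is no a priori reason for $(j_1,\ldots,j_n)$ to be principal. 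The paper's key idea, which you are missing, is that the \emph{polarization} is what forces principality: in Kr\"amer--Pappas local model coordinates, the Hodge filtration is encoded by a symmetric matrix $X\in M_n(R_y)$ (symmetry coming from isotropy of the Hodge filtration under the symplectic form), and one has a factorization $X - i_{R_y}(\pi) = v\cdot{}^t w$ with $v$ conjugate to the column $(j_1,\ldots,j_n)$. Symmetry of $v\cdot{}^t w$ then forces $v$ to be a scalar multiple of $w$, and since some entry of $w$ is a unit this makes $(j_1,\ldots,j_n)$ principal. Without invoking the polarization in this way, the argument at ramified primes does not go through.
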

  
\begin{proof}  
Let $(A_0,A,f)$ be the triple over $\F$ determined by the point $z$.   
Let $\widehat{\co}_{\kk,z}$ be the completion of the strict Henselization of $\co_\kk$ with respect
to the geometric point 
\[
\Spec(\F) \map{z} \mathcal{Z}(m) \to \Spec(\co_\kk),
\]
 and let  $\mathrm{CLN}$ be the category of complete local Noetherian 
 $\widehat{\co}_{\kk,z}$-algebras with residue field $\F$.  
 The ring $R_y$ represents the functor assigning to every 
object $S$ of $\mathrm{CLN}$ the set of isomorphism classes of deformations of 
$(A_0,A)$ to $S$.  Here deformation always means deformation to an object of 
$\mathcal{M}(S)$; that is, the deformations of $A_0$ and $A$ are also equipped with 
$\co_\kk$-actions, polarizations, etc.~ lifting those on $A_0$ and $A$.  Similarly, 
$R_z$ represents the deformation functor of the triple $(A_0,A,f)$. 
The surjectivity of the
tautological map $R_y \to R_z$ is equivalent to the injectivity of
\[
\Hom(R_z, S) \to \Hom(R_y,S)
\] 
for every object $S$ of $\mathrm{CLN}$, and this injectivity is proved in  
\cite[Lemma 2.2.2.1]{kai-wen}.  Let 
\[
I = \mathrm{ker} ( R_y \to R_z),
\]
let $\mathfrak{m}$ be the maximal ideal of $R_y$, and set $S=R_y/\mathfrak{m}I$.
The kernel of the natural surjection $S \to R_z$ is $\mathcal{I}=I/\mathfrak{m}I$, and 
satisfies $\mathcal{I}^2=0$.  By Nakayama's lemma, to prove that $I$ is a principal ideal, it suffices to 
prove that $\mathcal{I}$ is.

Let $(\mathbf{A}_0,\mathbf{A})$ be the universal deformation of $(A_0,A)$ to $R_y$.
The reduction $(\mathbf{A}_{0/R_z} , \mathbf{A}_{/R_z})$ comes with a universal
map $\mathbf{f} : \mathbf{A}_{0/R_z}  \to \mathbf{A}_{/R_z}$, and we will 
use the deformation theory arguments of \cite[Chapter 2]{kai-wen} to
 study the obstruction to lifting $\mathbf{f}$ to a map $\mathbf{A}_{0/S} \to \mathbf{A}_{/S}$.
 As in the proof of Proposition \ref{Prop:zero etale}, the de Rham homology groups
$H_1^\mathrm{dR}(\mathbf{A}_0)$ and $H_1^\mathrm{dR}(\mathbf{A})$  are free of ranks 
$1$ and $n$ over $\co_\kk\otimes_\Z R_y$, and sit in short exact sequences
\[
0 \to \mathrm{Fil}\, H_1^\mathrm{dR} ( \mathbf{A}_0) 
\to H_1^\mathrm{dR} ( \mathbf{A}_0)  \to \Lie(\mathbf{A}_0) \to 0
\]
and 
\[
0 \to \mathrm{Fil}\, H_1^\mathrm{dR} ( \mathbf{A}) 
\to H_1^\mathrm{dR} ( \mathbf{A})  \to \Lie(\mathbf{A}) \to 0.
\]
Furthermore, again by the proof of Proposition \ref{Prop:zero etale},
\[
\mathrm{Fil}\, H_1^\mathrm{dR} ( \mathbf{A}_0)  
= \bm{j}\cdot H_1^\mathrm{dR} ( \mathbf{A}_0).
\]
The same holds with $\mathbf{A}_0$ and $\mathbf{A}$ replaced by their reductions to $S$ or 
$R_z$. 
Fix once and for all
an $\co_\kk\otimes_\Z R_y$-module generator 
\[
\sigma\in H_1^\mathrm{dR} ( \mathbf{A}_0),
\]
and a basis $\epsilon_1,\ldots, \epsilon_n$ of $\Lie(\mathbf{A})$ such that
$\epsilon_1,\ldots, \epsilon_{n-1}$ is a basis of the universal $R_y$-submodule 
$\mathcal{F} \subset \Lie(\mathbf{A})$ satisfying Kr\"amer's conditions.  
In particular $\bm{j}\cdot \epsilon_i =0$ for $1\le i<n$, and  the operator 
$\bm{j}$ on $\Lie(\mathbf{A} )$ has the form
\begin{equation}\label{j mat}
\bm{j} = \left[\begin{matrix}
0& \cdots & 0 & j_1 \\
\vdots & \ddots & \vdots & \vdots \\
0& \cdots & 0 & j_n
\end{matrix}\right]
\end{equation}
for some $j_1,\ldots, j_n\in R_y$.

The map $\mathbf{f}$ induces a map
\[
\mathbf{f} : H_1^\mathrm{dR} ( \mathbf{A}_{0/R_z}) \to H_1^\mathrm{dR} ( \mathbf{A}_{/R_z})
\]
respecting the Hodge filtrations, and by \cite[Proposition 2.1.6.4]{kai-wen} there is  a canonical lift
\[
\tilde{\mathbf{f}} : H_1^\mathrm{dR} ( \mathbf{A}_{0/S}) \to H_1^\mathrm{dR} ( \mathbf{A}_{/S})
\]
(which need not respect the Hodge filtrations).  The obstruction to lifting $\mathbf{f}$
to a map $\mathbf{A}_{0/S} \to \mathbf{A}_{/S}$ is given by the composition
\[
 \bm{j}\cdot H_1^\mathrm{dR} ( \mathbf{A}_{0/S}) \to 
 H_1^\mathrm{dR} ( \mathbf{A}_{0/S}) \map{\tilde{ \mathbf{f} }} 
  H_1^\mathrm{dR} ( \mathbf{A}_{/S}) \to \Lie(\mathbf{A}_{/S}).
\]
which we denote by $\mathrm{obst}_\mathbf{f}$.  
The image of $\tilde{\mathbf{f}}(\sigma)$ in $\Lie(\mathbf{A}_{/S})$ is 
$x_1 \epsilon_1+\cdots +x_n \epsilon_n$ for some $x_1,\ldots,x_n\in S$,
and 
\[
\mathrm{obst}_\mathbf{f}(\bm{j}\sigma) = x_1 \bm{j} \epsilon_1 + \cdots +x_n\bm{j} \epsilon_n
=    x_n \bm{j} \epsilon_n  = x_n( j_1 \epsilon_1 + \cdots + j_n \epsilon_n).
\]  
The map $\mathrm{obst}_\mathbf{f}$ becomes trivial after applying $\otimes_S R_z$,
and hence the ideal $x_n (j_1, \ldots ,  j_n)$ of $S$ is contained in $\mathcal{I}$.  
On the other hand,  the map $\mathrm{obst}_\mathbf{f}$ becomes trivial upon reduction to  
$S/x_n (j_1,\ldots, j_n)$, which implies that the universal triple 
$(\mathbf{A}_{0/R_z}, \mathbf{A}_{/R_z},\mathbf{f})$ lifts to $S/x_n(j_1,\ldots, j_n)$.
By the universality of $(\mathbf{A}_{0/R_z}, \mathbf{A}_{/R_z},\mathbf{f})$, this 
lift corresponds to a section to the natural surjection $S/x_n(j_1,\ldots, j_n) \to R_z$,
which is therefore an isomorphism.  In other words
\[
\mathcal{I}=x_n (j_1,\ldots, j_n).
\]
To complete the proof that $I$ is principal, it now suffices to  
show that $(j_1,\ldots,j_n)$
is a principal ideal of $R_y$. After Theorem \ref{Thm:KP}, it is natural to consider separately the cases
 $\bm{j} \cdot\Lie(A)\not=0$ and $\bm{j} \cdot\Lie(A)=0$.

The case $\bm{j} \cdot\Lie(A)\not=0$ is easy.  This assumption  implies
that (\ref{j mat}) is nonzero after reduction to the residue field $\F$, and hence
at least one of the  $j_i$'s is a unit in $R_y$.  In particular
$(j_1,\ldots, j_n)=R_y$ is a principal ideal.

Now assume that $\bm{j} \cdot\Lie(A)=0$.  By Theorem \ref{Thm:KP},  this implies that
$\mathrm{char}(\F) \mid d_\kk$.  Let $\mathfrak{p}$ be the kernel of the structure
map $i_\F:\co_\kk\to \F$, and let $p$ be the rational prime below $\mathfrak{p}$.  
As we assume that $d_\kk$ is odd, we may fix a uniformizer 
$\pi\in \co_{\kk,\mathfrak{p}}$ in such a way that $\overline{\pi}=-\pi$.
Note that $i_\F(\pi)=0$, and so $\bm{j}=\pi$ as endomorphisms of $\Lie(A)$.  In particular
$\pi\Lie(A)=0$.  Using the fact that   $H_1^\mathrm{dR}(A)$ is free of rank $n$ over 
$\co_\kk \otimes_\Z \F$, it is easy to deduce from the exactness of 
\[
0 \to \mathrm{Fil}\, H_1^\mathrm{dR}(A) \to H_1^\mathrm{dR}(A) \to \Lie(A) \to 0
\]
that  
\begin{equation}\label{degenerate hodge}
 \mathrm{Fil}\, H_1^\mathrm{dR}(A) = \pi \cdot H_1^\mathrm{dR}(A).
\end{equation}

Using the coordinates of \cite{kramer,pappas00}, we examine the structure 
of the universal $R_y$-module short exact sequence
\[
0 \to \mathrm{Fil}\, H_1^\mathrm{dR}(\mathbf{A}) \to
H_1^\mathrm{dR}(\mathbf{A}) \to \Lie(\mathbf{A}) \to 0,
\]
and of the universal submodule $\mathcal{F}\subset \Lie(\mathbf{A})$.  
Fix an isomorphism $H_1^\mathrm{dR}(\mathbf{A}) \iso (\co_{\kk,\mathfrak{p}} \otimes_{\Z_p} R_y)^n$,
and identify $\co_{\kk,\mathfrak{p}} \otimes_{\Z_p} R_y = R_y \oplus \pi R_y$.
This gives a decomposition
\[
H_1^\mathrm{dR}(\mathbf{A}) \iso R_y^n \oplus \pi R_y^n.
\]
By \cite[Lemma 3.6]{pappas00}, these choices may be made in such a way that the 
symplectic form on the left hand side determined by the principal polarization on $\mathbf{A}$
is identified with the symplectic form $\psi$ on the right hand side determined by
$\psi(e_i,e_j)=0$ and $\psi(e_i,\pi e_j)=\delta_{i,j}$.  Here
$e_1,\ldots, e_n\in R_y^n$ are the standard basis vectors.
Combining (\ref{degenerate hodge}) with Nakayama's lemma shows that the composition
\[
 \mathrm{Fil}\, H_1^\mathrm{dR}(\mathbf{A}) \to R_y^n \oplus \pi R_y^n \to \pi R_y^n
\]
is an isomorphism (the first arrow is the inclusion, the second the projection).  This implies 
that there is a unique $X\in M_n(R_y)$ such that the vectors
\begin{align*}
v_1 & = X e_1 - \pi e_1 \\
& \vdots \\
v_n &= X e_n - \pi e_n
\end{align*}
are a basis for $\mathrm{Fil}\,H_1^\mathrm{dR}(\mathbf{A})$, and such that the 
images of $e_1,\ldots, e_n$ in $\Lie(\mathbf{A})$ form a basis.  With respect to this 
basis, the action of $\pi$ on $\Lie(\mathbf{A})$ is through the matrix $X$.  The Hodge filtration
$\mathrm{Fil}\, H_1^\mathrm{dR}(\mathbf{A})$ is isotropic for the symplectic form $\psi$,
and one easily checks that this implies  ${}^t X=X$.

The matrices (\ref{j mat}) and $X-i_{R_y}(\pi)$ are conjugate by an element of $\mathrm{GL}_n(R_y)$,
as they  represent the operator $\pi - i_{R_y}(\pi)$ with respect to different bases of 
$\Lie(\mathbf{A})$.  Noting that (\ref{j mat}) factors as
\[
\bm{j} = j\cdot {}^te_n,
\] 
where ${}^tj = [\begin{matrix} j_1 & \cdots & j_n \end{matrix}]$, there is a 
$\gamma\in \mathrm{GL}_n(R_y)$, such that 
\[
 \gamma j\cdot {}^t e_n \gamma^{-1} = X-i_{R_y}(\pi).
\]
Define  $v,w\in R_y^n$ by 
\[
 \left[\begin{matrix} v_1 \\ \vdots \\ v_n \end{matrix} \right]  =  \gamma j
\qquad
\mbox{and} 
\qquad
 \left[\begin{matrix} w_1 \\ \vdots \\ w_n \end{matrix}\right]    =  {}^t \gamma^{-1}  e_n ,
\]
so that $v\cdot {}^t w = X-i_{R_y}(\pi)$. 
The essential point is that the symmetry of $X$ implies the symmetry of $v\cdot {}^tw$, 
so that $v_i w_j = v_j w_i$ for all $i,j$.  
At least one component of $w$, say $w_\ell$, is a unit, and so $(v_\ell  w_\ell^{-1} )w_j = v_j$.
This means that $v$ is a scalar multiple of $w$:  $v=c w$ where $c= v_\ell  w_\ell^{-1} \in R_y$.
From the definitions of $v$ and $w$ we deduce
\[
{}^t\gamma \gamma  \cdot   j = c  \cdot   e_n,  
\]
and hence $(j_1,\ldots,j_n)=(c)$ is a principal ideal.

Having now proved that $R_y\to R_z$ is surjective with principal kernel, it only 
remains to prove that the map is not an isomorphism.  Suppose it is an isomorphism.
This implies, by dimension considerations,  that $\mathcal{Z}(m)$
contains an entire  irreducible component of $\mathcal{M}$.  As $\mathcal{M}$
is flat over $\co_\kk$, it follows that $\mathcal{Z}(m)_{/\C}$
contains an irreducible component of $\mathcal{M}_{/\C}$.   But $\mathcal{Z}(m)_{/\C}$
is a divisor on $\mathcal{M}_{/\C}$, as one can check using the explicit complex
uniformization of \cite{KRunitaryII} or \cite{howardCM}.
\end{proof}


\subsection{Analytic compactification}
\label{ss:complex uniform}


Recall that we have fixed an embedding $\iota:\kk\to \C$.
We now construct explicit coordinates on  the complex orbifold 
$\mathcal{M}(\C) \iso \mathcal{M}^\naive(\C)$, and give a 
purely analytic construction of the  compactification $\mathcal{M}^*(\C)$.
The complex uniformization of $\mathcal{M}(\C)$ is described both in 
\cite{KRunitaryII} and in \cite{howardCM}, and we only sketch the main ideas.
The compactification is a special case of the vastly more general constructions
of \cite{AMRT,kai-wen_2}, and generalizes the signature $(2,1)$ case
studied in   \cite{cogdell81,cogdell85,miller09}.

The orbifold $\mathcal{M}(\C) \iso \mathcal{M}_{(1,0)} (\C)\times \mathcal{M}_{(n-1,1)}(\C)$ 
is disconnected,  and its connected components are indexed by isomorphism classes of pairs
$(\mathfrak{A}_0,\mathfrak{A})$ in which  
\begin{itemize}
\item
$\mathfrak{A}_0$ is a projective $\co_\kk$-module of 
rank $1$, equipped with a positive definite Hermitian form $h_{\mathfrak{A}_0}(x,y)$,
under which $\mathfrak{A}_0$ is self dual,
\item
$\mathfrak{A}$ is a projective $\co_\kk$-module of rank $n$, equipped with a
Hermitian form $h_{\mathfrak{A}}(x,y)$ of signature $(n-1,1)$, under which $\mathfrak{A}$ is self dual.
\end{itemize}
The $\mathfrak{A}_0$'s index the (finitely many) points of $\mathcal{M}_{(1,0)} (\C)$, 
while the $\mathfrak{A}$'s index the connected components of $\mathcal{M}_{(n-1,1)}(\C)$.
Fix one such pair $(\mathfrak{A}_0 , \mathfrak{A})$, and set
\[
L =\Hom_{\co_\kk}(\mathfrak{A}_0,\mathfrak{A})
\]
and $L_\kk=L\otimes_{\co_\kk} \kk$. There is a Hermitian form $\langle f,g\rangle$
on $L$ of signature $(n-1,1)$ characterized by 
\[
h_{\mathfrak{A}_0}(x,x) \cdot \langle f, g \rangle = h_{\mathfrak{A}}( f(x), g(x) )
\]
for all $x\in \mathfrak{A}_0$.    The discrete group 
\[
\tilde{\Gamma} = \Aut(\mathfrak{A}_0,h_{\mathfrak{A}_0}) \times \Aut(\mathfrak{A}, h_\mathfrak{A})
\]
acts  on $L$ by  $(\gamma_0,\gamma) \action f = \gamma\circ f\circ \gamma_0^{-1},$
and sits in an exact sequence
\[
1 \to \mu(\kk) \to \tilde{\Gamma} \to \Gamma \to 1
\]
where $\Gamma = \Aut(L,\langle \cdot,\cdot\rangle)$, and $\mu(\kk)\subset \tilde{\Gamma}$
is embedded diagonally.
Let $\mathcal{D}$ be the space of negative lines in  
\[
V=L_\kk\otimes_{\kk} \C.
\]
Such  lines are denoted with the symbol $\hh$.   The group $\Gamma$, and hence also $\tilde{\Gamma}$, acts on $\mathcal{D}$, and 
there is a morphism of complex orbifolds
\[
\tilde{ \Gamma} \backslash \mathcal{D} \to \mathcal{M}(\C)
\]
identifying $ \tilde{\Gamma} \backslash \mathcal{D}$ with the 
connected component of  $\mathcal{M}(\C)$ indexed by $(\mathfrak{A}_0,\mathfrak{A})$.

As in Section \ref{ss:boundary labels}, the boundary components of $\mathcal{M}^*(\C)$
are indexed by isomorphisms classes of triples $(\mathfrak{A}_0,\mathfrak{m} \subset \mathfrak{A}  )$,
where $\mathfrak{A}_0$ and $\mathfrak{A}$ are as above, and  
$\mathfrak{m}\subset \mathfrak{A}$ is an isotropic direct summand of rank one.  
We now give an analytic construction of the boundary component 
indexed by $(\mathfrak{A}_0,  \mathfrak{m}  \subset  \mathfrak{A})$.    
The $\co_\kk$-submodule
\[
\mathfrak{a} = \{ f\in L : f(\mathfrak{A}_0) \subset \mathfrak{m} \}
\]
is an isotropic rank one direct summand of $L$. 
There is a canonical filtration  $\mathfrak{a} \subset \mathfrak{a}^\perp \subset L$, and by 
Proposition \ref{Prop:normal decomp}  we may fix a decomposition
\[
L=\mathfrak{a}  \oplus \Lambda \oplus \mathfrak{c}
\]
in such a way that $\mathfrak{c}$ is isotropic, $\mathfrak{a}^\perp = \mathfrak{a} \oplus \Lambda$,
and $\Lambda = ( \mathfrak{a}  \oplus \mathfrak{c} )^\perp$ is positive definite and self dual.
Let $\mathfrak{a}_\kk$, $\Lambda_\kk$, and $\mathfrak{c}_\kk$ be the $\kk$-spans of $\mathfrak{a}$, 
$\Lambda$, and $\mathfrak{c}$ in $L_\kk$.
We may choose basis elements $e,e_1\ldots, e_{n-2}, e'\in L_\kk$ in such a way that 
\begin{align*}
\mathfrak{a}_\kk   & = \kk e \\
\Lambda_\kk  & = \kk e_1 + \cdots +\kk e_{n-2}  \\
\mathfrak{c}_\kk   & = \kk e'  ,
\end{align*}
and so that  the Hermitian form on $L_\kk$ is  given by
\[
\langle f,g \rangle = {}^t f \cdot \left( \begin{smallmatrix} 
& &  \delta_\kk \\
&  A & \\
- \delta_\kk
\end{smallmatrix}\right)  \cdot  \overline{g}
\]
for a diagonal matrix $A\in M_{n-2}(\Q)$ with positive diagonal entries. 
There are fractional $\co_\kk$-ideals $\mathfrak{a}_0$ and $\mathfrak{c}_0$ defined by 
$\mathfrak{a}  = \mathfrak{a}_0 e$ and  $\mathfrak{c} =\mathfrak{c}_0e'$,
and related by $\delta_\kk \overline{\mathfrak{c}}_0  \mathfrak{a}_0=\co_\kk$.

Define a bijection
\begin{equation}\label{cusp coordinates}
\mathcal{D} \iso \{  (z ,u)\in \C\times \C^{n-2} : i\sqrt{d_\kk} (z - \overline{z})  +  {}^t u A \overline{u} < 0    \}
\end{equation}
 by associating to $(z,u)$  the span of
\[
\left[ \begin{smallmatrix} z \\ u \\ 1 \end{smallmatrix} \right] \in \C^n \iso V,
\]
and define a positive real analytic function of the variable  $\hh\in \mathcal{D}$ by
\[
\xi(\hh) = - d_\kk \frac{ \langle v,v\rangle  }{ | \langle v,e\rangle|^2 } 
= - i\sqrt{d_\kk} (z - \overline{z})  -  {}^t u A \overline{u} .
\]
In the middle expression $v$ is any nonzero vector on the line $\hh$.
For every $\epsilon>0$,  define 
\[
\mathcal{D}^\epsilon =
  \left\{ \hh\in \mathcal{D} :  \frac{ 1}{  \xi(\hh) } <  \epsilon    \right\}.
\]
Sets of this form should be thought of as tubular neighborhoods of our boundary component,
and the function $1/\xi$ is to be thought of as ``distance to the boundary".

To the isotropic line $\kk e=\mathfrak{a}_\kk$ there are associated subgroups
\[
C_\Gamma\subset N_\Gamma\subset P_\Gamma\subset \Gamma
\]
defined as follows.  Let $P\subset  \Aut(L_\kk ,\langle\cdot,\cdot\rangle)$ be
the subgroup of automorphisms preserving the isotropic line $\mathfrak{a}_\kk$.
As $\mathfrak{a}_\kk^\perp = \mathfrak{a}_\kk \oplus \Lambda_\kk$, elements of $P$ 
necessarily preserve the filtration
\[
\mathfrak{a}_\kk \subset \mathfrak{a}_\kk \oplus \Lambda_\kk \subset L_\kk.
\] 
The unipotent radical $N \subset P$ is  the subgroup of elements acting trivially on the 
graded pieces of the filtration.  In our coordinates
\[
N = \left\{ 
\left( \begin{matrix}   1 & {}^tT  & X \\ & I_{n-2} & S \\ & & 1   \end{matrix} \right)
:  \begin{array}{cc}
S,T\in \kk^{n-2},  & \delta_\kk T=   -  A\overline{S}, \\
X\in \kk,  & \delta_\kk (X-\overline{X}) + {}^tS A\overline{S} =0
\end{array}
\right\}.
\]
The center  $C \subset N$ consists of those matrices for which $S=T=0$.
Abbreviate
\[
P_\Gamma=P\cap \Gamma \qquad N_\Gamma=N\cap \Gamma \qquad C_\Gamma=C \cap\Gamma.
\]
There is a unique $r\in \Q^+$  such that 
\[
C_\Gamma = \left\{ \left( \begin{matrix}   1 & 0  & X \\ & I_{n-2} & 0 \\ & & 1   \end{matrix} \right)   : X \in r\Z \right\}.
\]
The value of $r$ depends on the choice of $e\in \mathfrak{a}_\kk$, and satisfies
$r\Z = \delta_\kk\mathfrak{a}_0\overline{\mathfrak{a}}_0 \cap \Q$.   Using this (and the 
hypothesis that $d_\kk$ is odd) it is easy to see that
\[
r=   d_\kk \mathrm{N}(\mathfrak{a}_0).
\]

The function $\xi(\hh)$ is invariant under the action of $P_\Gamma$ on $\mathcal{D}$,
and hence $\mathcal{D}^\epsilon$ is stable under $P_\Gamma$.
The action of $N_\Gamma$ on $\mathcal{D}$ has the explicit form
\[
 \left( \begin{matrix}   1 & {}^tT  & X \\ & I_{n-2} & S \\ & & 1   \end{matrix} \right)   \action (z ,u) 
 = (  z +{}^tTu + X, u +S    ),
\]
and if we set $q=e^{2\pi i z/r}$, then $(z,u)\mapsto (q,u)$ defines an isomorphism
\[
C_\Gamma\backslash \mathcal{D}^\epsilon \iso
 \{  (q,u) \in \C \times \C^{n-2} :  0<  |q|  < e^{-\rho(\epsilon,u)}    \}
\]
where
\[
\rho(\epsilon,u) = \frac{\pi}{r \sqrt{d_\kk} } \left(\frac{1}{  \epsilon} +{}^t u A \overline{u} \right).
\]
Our coordinates exhibit the quotient $C_\Gamma \backslash \mathcal{D}^\epsilon$
as a punctured disk bundle over  $\C^{n-2}$, and as such there is a natural partial
compactification
\begin{equation}\label{boundary coords}
\overline{C_\Gamma\backslash \mathcal{D}^\epsilon} 
= \{  (q,u) \in \C \times \C^{n-2} :    |q|  < e^{-\rho(\epsilon,u)}    \}.
\end{equation}
In the coordinates (\ref{boundary coords}),
\begin{equation}\label{tau coords}
\xi(\hh) =  - \frac{ d_\kk^{3/2} \mathrm{N}(\mathfrak{a}_0)  }{  2 \pi } \cdot   \log|q|^2   -  {}^t u A \overline{u} .
\end{equation}

For   sufficiently small $\epsilon$ the map
$P_\Gamma \backslash \mathcal{D}^\epsilon  \to \Gamma \backslash \mathcal{D}$
is an open immersion of orbifolds.    The  action of $P_\Gamma$ on 
$C_\Gamma \backslash \mathcal{D}^\epsilon$  extends to an action on (\ref{boundary coords})
leaving the boundary divisor $q=0$ invariant, and if we set 
\[
\overline{P_\Gamma\backslash \mathcal{D}^\epsilon} = 
 (  P_\Gamma / C_\Gamma ) \backslash ( \overline{C_\Gamma\backslash \mathcal{D}^\epsilon})
\]
 there is an open immersion of orbifolds
$ P_\Gamma \backslash \mathcal{D}^\epsilon \to \overline{P_\Gamma\backslash \mathcal{D}^\epsilon}$.
This allows us to  glue $\overline{P_\Gamma\backslash \mathcal{D}^\epsilon}$ onto 
$\Gamma\backslash \mathcal{D}$
to create a partial compactification of $\Gamma\backslash \mathcal{D}$.  Taking the 
orbifold quotient by $\mu(\kk)$ acting trivially then gives a partial compactification 
of $\tilde{\Gamma}\backslash \mathcal{D} \iso \mu(\kk) \backslash (\Gamma\backslash \mathcal{D})$,
obtained by glueing  
\[
X^\epsilon = \mu(\kk) \backslash \left(  \overline{P_\Gamma\backslash \mathcal{D}^\epsilon} \right).
\]

The results of \cite{kai-wen_2}, comparing analytic and algebraic compactifications, 
show that $X^\epsilon$ is a tubular neighborhood of the algebraically defined boundary component 
indexed by $(\mathfrak{A}_0, \mathfrak{m}\subset \mathfrak{A})$.  The boundary component itself
is defined by the equation $q=0$ on $X^\epsilon$.


\subsection{Green functions}


In \cite{howardCM} one can find the construction, for every $m\not=0$,  of a Green function
$\Gr(m,v,\cdot)$ for the divisor $\mathcal{Z}(m)$ on the open Shimura variety $\mathcal{M}$, but 
with no claims about how it behaves near the boundary.
This Green function depends on a choice of auxiliary parameter $v\in \R^+$, and its construction is 
based on ideas of Kudla \cite{kudla97}.  In this subsection we begin the task of
analyzing the behavior of this Green function near the boundary of the analytic compactification
$\mathcal{M}^*(\C)$. 

We continue with our fixed  triple $(\mathfrak{A}_0, \mathfrak{m} \subset \mathfrak{A})$ indexing 
a boundary component of $\mathcal{M}^*(\C)$.  The pair $(\mathfrak{A}_0,\mathfrak{A})$ 
indexes a connected component $\tilde{\Gamma}\backslash \mathcal{D} \subset \mathcal{M}(\C)$.
We also keep the basis  $e,e_1\ldots, e_{n-2}, e'\in L_\kk$ of the previous subsection.
Given a nonisotropic  $f\in L_\kk$,  write 
\begin{equation}\label{f coords}
f= ae + b_1e_1+\cdots + b_{n-2} e_{n-2} + ce',
\end{equation}
and define a  holomorphic function on $\mathcal{D}$, in the coordinates (\ref{cusp coordinates}), by
\[
\Psi_f(\hh) = \big\langle \left[ \begin{matrix}
z \\ u\\ 1
\end{matrix} \right], f \big\rangle =  \delta_\kk \overline{c} z  -\delta_\kk \overline{a}   + {}^t\overline{b}  A u
\]
(from this point on we will no longer  distinguish between elements of $\kk$ and their images
under the fixed embedding $\iota:\kk\to \C$).  Define an analytic divisor
 $\mathcal{D}(f) \subset \mathcal{D}$ by
\begin{align*}
\mathcal{D}(f) &= \{ \hh \in \mathcal{D} :  \hh \perp f\} \\
&= \{\hh\in \mathcal{D} : \Psi_f(\hh) =0 \}.
\end{align*}

For a positive real number $x$, the function 
\begin{equation}\label{beta}
\beta_1(x) = \int_1^\infty e^{-xu}  \, \frac{du}{u}
\end{equation}
decays exponentially as $x\to\infty$.  There is a power series expansion
\[
\beta_1(x)+\log(x) =-\gamma + \sum_{k=1}^\infty \frac{ (-1)^{k+1}x^k}{k\cdot k!}
\]
($\gamma=  0.577216\ldots$ is Euler's constant), and so the 
left hand side extends to a smooth function on $\R$.
Given a parameter $v\in \R^+$ and an integer $m\not=0$, the Green function
of \cite{howardCM}, restricted to the component $\tilde{\Gamma}\backslash \mathcal{D}$, 
is given by the formula
\begin{equation}\label{green def}
\Gr(m,v,\hh) =   \sum_{  \substack{  f\in L \\  \langle f,f\rangle =m  }  }  
\beta_1 \left(  \frac{  4\pi v  | \Psi_f(\hh) |^2  }{ \xi(\hh) }   \right).
\end{equation}
If $\psi_m(\hh)=0$ is a local equation (on some open subset $U\subset \mathcal{D}$)
for the analytic divisor
\[
\mathcal{D}(m) = \sum_{ \substack{ f\in L  \\  \langle f,f\rangle =m }} \mathcal{D}(f)
\]
then  $\Gr(m,v,\hh) + \log|\psi_m(\hh)|^2$
extends to a smooth  function on $U$.  The orbifold divisor 
$\tilde{\Gamma} \backslash \mathcal{D}(m)$ is none other than
the restriction of $\mathcal{Z}(m)(\C)$
to the component $\tilde{\Gamma}\backslash \mathcal{D} \subset \mathcal{M}(\C)$.
 If $m<0$  then $\mathcal{D}(m)$ is empty, and $\Gr(m,v,\hh)$  is simply a smooth  function on  
 $\tilde{\Gamma}\backslash\mathcal{D}$.

The problem is to understand the behavior of $\Gr(m,v,\hh)$ near the boundary divisor 
$q=0$ of $X^\epsilon$, or equivalently near the divisor $q=0$ of  (\ref{boundary coords}).
Define $P_\Gamma$-stable subsets of $L$ by
\begin{align}\label{L decomp}
L^\bndry(m) &= \{ f\in L: \langle f,f \rangle=m \mbox{ and }  \langle f,  \mathfrak{a}  \rangle =0 \} \\
L^\interior(m) &= \{ f\in L: \langle f,f \rangle=m \mbox{ and }\langle f ,  \mathfrak{a}   \rangle\not=0 \}. \nonumber
\end{align}
In the coordinates (\ref{f coords}) we have  $\langle f, e \rangle=-\delta_\kk c$, and so
$\langle f,\mathfrak{a}\rangle=0$ if and only if $c=0$.
To study  $\Gr(m,v,\hh)$ near the boundary,  we break (\ref{green def}) into two sums: 
\[
\Gr(m,v,\hh)=
\Gr^\bndry(m,v,\hh)  + \Gr^\interior(m,v,\hh) 
\]
where
\begin{align*}
\Gr^\bndry(m,v,\hh) &= \sum_{  f\in L^\bndry(m) } 
  \beta_1\left(  \frac{  4\pi v | \Psi_f(\hh) |^2  }{ \xi(\hh) }  \right) \\
\Gr^\interior(m,v,\hh) &= \sum_{    f\in L^\interior(m) } 
  \beta_1\left(  \frac{  4\pi v | \Psi_f(\hh) |^2  }{ \xi(\hh) }  \right).
\end{align*}

Consider the image of  $L$ under 
$f\mapsto \langle f,e \rangle=-\delta_\kk c$.  The image is a $\Z$-lattice in $\C$, 
and so there is some positive  
real number $\mathbf{c}_\mathrm{min}$ satisfying $|c|^2> 8 \mathbf{c}_\mathrm{min} $ 
for every $f\in L^\interior(m)$. For every $\hh \in \mathcal{D}$ let $\hh^\perp \subset V$ be the 
orthogonal complement of $\hh$ under $\langle \cdot,\cdot\rangle$.   There is unique 
positive definite quadratic form $Q_\hh$ on the 
real vector space underlying $V$ satisfying the conditions
\begin{itemize}
\item $\hh$ and $\hh^\perp$ are orthogonal under $Q_\hh$,
\item the restriction of $Q_\hh$ to $\hh$ is $Q_\hh(f) = -\langle f,f\rangle$,
\item the restriction of $Q_\hh$ to $\hh^\perp$ is $Q_\hh(f) = \langle f,f\rangle$.
\end{itemize}
With a bit of algebra one can show that $Q_\hh$ is given by the explicit formula
\[
Q_\hh(f) = \langle f,f \rangle + 2\cdot  \frac{ | \Psi_f(\hh) |^2  }{ \xi(\hh) },
\]
and by the even more explicit formula
\begin{align}\nonumber
Q_\hh(f)    & = 
 \frac{ |c|^2 \xi(\hh) }{2}  +  {}^t(b-cu) A ( \overline{ b} - \overline{cu} )   \nonumber \\
& \quad 
 + \frac{ 2 }{    \xi(\hh)} 
 \left|   \frac{ \delta_\kk c (z+\overline{z})}{2} - \delta_\kk a + {}^t\overline{u} A b - \frac{ c }{2} \cdot  {}^tuA\overline{u}    \right|^2  
  \label{Q bound}
\end{align}
in the coordinates (\ref{cusp coordinates}).
Ignoring all but the first term on the right shows that $Q_\hh(f)   >   |c|^2 \xi(\hh) /2 $.
It follows that for  sufficiently small  $\epsilon$ (depending on $m$), 
\begin{equation}\label{simple noncusp bound}
   \frac{ | \Psi_f(\hh) |^2  }{ \xi(\hh) }
      > \frac{Q_\hh(f)}{4}  > 
 \mathbf{c}_\mathrm{min}\xi(\hh)  > \frac{\mathbf{c}_\mathrm{min}}{\epsilon}
\end{equation}
for all $\hh\in \mathcal{D}^\epsilon$ and all $f\in L^\interior(m)$.

A more geometric motivation for the decomposition $\Gr=\Gr^\bndry+\Gr^\interior$
 is that the sum defining $\Gr^\interior$ is over those $f$'s for which the image of 
 $\mathcal{D}(f)$ in  $\tilde{\Gamma}\backslash \mathcal{D}$ does not intersect the 
 boundary divisor $q=0$ of 
(\ref{boundary coords}), while $\Gr^\bndry$ corresponds to those $f$ for which $\mathcal{D}(f)$
does meet the boundary.
 Indeed, (\ref{simple noncusp bound}) shows that by shrinking $\epsilon$ 
 we may assume that $\Psi_f(\hh)$
is nonvanishing on $\mathcal{D}^\epsilon$ for all $f\in L^\interior(m)$.  This proves that
on  $\mathcal{D}^\epsilon$ we have the equality of divisors
\begin{equation}\label{boundary KR}
\mathcal{Z}(m) (\C) = \sum_{ \substack{ f\in L \\ \langle f,f\rangle =m} } \mathcal{D}(f) 
= \sum_{ f \in L^\bndry(m)} \mathcal{D}(f).
\end{equation}
The motivates the notation ``$\interior$" and ``$\bndry$", which are short for ``interior"
and ``boundary".


\subsection{Behavior near the boundary, part I}


In this subsection we study the behavior of $\Gr^\interior(m,v,\hh)$ near the boundary.
This is relatively easy. For $f\in L^\interior(m)$,  the estimates (\ref{simple noncusp bound}) tells us that  
$ | \Psi_f(\hh) |^2 / \xi(\hh) $ 
grows without bound as $\xi(\hh) \to \infty$.
As $\beta_1$ decays exponentially at $\infty$,  the sum defining $\Gr^\interior(m,v,\hh)$
 converges to $0$ term-by-term as $\xi(\hh) \to \infty$, or, equivalently by (\ref{tau coords}),
 as $q\to 0$. Thus one does not expect 
$\Gr^\interior(m,v,\hh)$ to contribute significantly to the behavior of  $\Gr(m,v,\hh)$ near $q=0$.  
The following proposition makes this more precise.

\begin{Prop}\label{Prop:growth I}
The  function $\mathcal{E}^\interior(\hh) = \Gr^\interior(m,v,\hh)$, 
initially defined on $C_\Gamma\backslash \mathcal{D}^\epsilon$,
extends continuously to $\overline{C_\Gamma\backslash \mathcal{D}^\epsilon}$ and vanishes identically
on the boundary divisor $q=0$. The differential forms $\partial \mathcal{E}^\interior$, $\overline{\partial} \mathcal{E}^\interior$, and 
$\partial\overline{\partial}\mathcal{E}^\interior$
have $\log$-$\log$ growth along the divisor $q=0$.
\end{Prop}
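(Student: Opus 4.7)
The plan is to exploit the exponential decay of $\beta_1$ at infinity, together with (\ref{simple noncusp bound}), which forces the arguments $4\pi v|\Psi_f(\hh)|^2/\coord(\hh)$ of $\beta_1$ to grow without bound on $\mathcal{D}^\epsilon$ uniformly in $f\in L^\interior(m)$. Using the identity $Q_\hh(f)=\langle f,f\rangle+2|\Psi_f(\hh)|^2/\coord(\hh)$ noted just before (\ref{Q bound}), each summand of $\mathcal{E}^\interior$ equals $\beta_1(2\pi v(Q_\hh(f)-m))$, and the bound $\beta_1(x)\le Cx^{-1}e^{-x}$ for $x\ge 1$ gives the termwise estimate
\[
\beta_1\!\left(\frac{4\pi v|\Psi_f(\hh)|^2}{\coord(\hh)}\right)\le C\,e^{\pi v m}\,e^{-\pi v Q_\hh(f)}.
\]

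I would next majorize $\sum_{f\in L^\interior(m)}e^{-\pi v Q_\hh(f)}$ by a theta-type series in which the dependence on $\coord(\hh)$ is made explicit.  Expanding $Q_\hh(f)$ via (\ref{Q bound}) and splitting the sum according to the value of $c=c(f)$ gives the majorant
\[
\sum_{c\ne 0} e^{-\pi v|c|^2\coord(\hh)/2}\cdot T_c(u,\hh),
\]
where $T_c(u,\hh)$ sums $e^{-\pi v\,{}^t(b-cu)A\overline{(b-cu)}-(2\pi v/\coord)|X_\hh(a,b,c)|^2}$ over pairs $(a,b)\in\mathfrak{a}_0\oplus\Lambda$ with $\langle f,f\rangle=m$.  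Positive-definiteness of $A$ makes the $b$-summation a theta series bounded uniformly on compact $u$-subsets, while the constraint $\langle f,f\rangle=m$ confines $a$ to an affine rank-one sublattice of $\mathfrak{a}_0$; the residual Gaussian in $a$ has variance of order $\coord(\hh)$, contributing a polynomial factor at most $O(\coord(\hh)^{1/2})$.  Finally, the outer sum over $c$ is controlled by $|c|^2\ge 8\mathbf{c}_\mathrm{min}$; by (\ref{tau coords}) this gives overall decay of order $|q|^{\kappa_0}(\log|q|^{-1})^N$ for some $\kappa_0=4v\mathbf{c}_\mathrm{min}d_\kk^{3/2}\mathrm{N}(\mathfrak{a}_0)>0$, uniformly on $u$-compacts.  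This establishes both uniform convergence of the defining series and vanishing of the continuous extension of $\mathcal{E}^\interior$ along $q=0$.

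The differential forms are treated in exactly the same manner.  Differentiating the series term by term, each derivative in $q, u_i, \overline q, \overline u_i$ replaces $\beta_1(x)$ by some $\beta_1^{(k)}(x)$, which still decays like $x^{-k-1}e^{-x}$, and multiplies by a polynomial in the entries of $f$ together with a rational function of $q,\overline q$ whose poles along $q=0$ have bounded order (coming from $\partial\coord/\partial q\propto 1/q$).  The theta-majorant of the previous paragraph, enhanced by a polynomial in $\coord(\hh)$ and a fixed power $|q|^{-k}$, still decays like $|q|^{\kappa_0-k}\cdot\mathrm{poly}(\log|q|^{-1})$.  Absorbing the factor $|q|^{-k}$ into the holomorphic differentials via $dq = q\log|q|\cdot\tfrac{dq}{q\log|q|}$ yields coefficients of size $O(|q|^{\kappa_0})$ times logarithms, hence extending continuously to $0$ at $q=0$.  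Thus $\partial\mathcal{E}^\interior$, $\overline\partial\mathcal{E}^\interior$, and $\partial\overline\partial\mathcal{E}^\interior$ all extend continuously across $q=0$ with vanishing boundary values, far stronger than the required log-log growth.

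The principal technical obstacle is the $O(\coord(\hh)^{1/2})$ bound on the inner sum $T_c(u,\hh)$.  This reduces to the classical growth rate $\sum_{n\in\Z}e^{-tn^2}=O(t^{-1/2})$ as $t\to 0^+$, applied along the rank-one affine sublattice of $\mathfrak{a}_0$ cut out by $\langle f,f\rangle=m$; the rest of the argument is then a routine comparison with an absolutely convergent theta series.
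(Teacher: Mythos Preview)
Your approach is correct and essentially the same as the paper's: both exploit the inequality $R_f(\hh)\ge Q_\hh(f)/4$ from (\ref{simple noncusp bound}) together with $Q_\hh(f)\ge |c|^2\coord(\hh)/2$ from (\ref{Q bound}) to majorize the series by a Gaussian theta sum with exponential decay in $\coord(\hh)$, hence power decay in $|q|$.

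The only organizational difference worth noting is that the paper packages the core estimate as a separate lemma (Lemma~\ref{Lem:noncusp lemma}): for \emph{any} function $\beta$ with $|\beta(t)|<e^{-C_1 t}$ for large $t$, one has $\bigl|\sum_{f\in L^\interior(m)}\beta(R_f(\hh))\bigr|<C_4|q|^{C_3}e^{C_2\,{}^tuA\overline{u}}$. This abstraction pays off in the treatment of derivatives: rather than tracking polynomial factors in the entries of $f$ as you do, the paper computes each partial derivative of $\beta_1(4\pi v R_f(\hh))$ explicitly using $\beta_1'(t)=-t^{-1}e^{-t}$, and shows directly that the result is bounded by $Ce^{-4\pi vR_f(\hh)}$ with $C$ \emph{independent of $f$} (the key step being $\bigl|\overline{c}\,\overline{\Psi}_f/R_f\bigr|\le C$, which follows from $|\Psi_f|^2=R_f\coord$ and $R_f>|c|^2\coord/8$). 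One then simply reapplies the lemma with a new $\beta$. Your route---absorbing polynomial factors in $f$ and in $\coord$ into the Gaussian majorant---reaches the same conclusion, but the paper's version avoids having to verify that the extra factors of $|\Psi_f|$, $R_f^{-1}$, and $\coord^{\pm 1}$ appearing in the derivatives really do combine into something controlled; in your write-up that verification is asserted rather than carried out. Both arguments yield decay of order $|q|^{\kappa}$ times powers of $\log|q|$, far stronger than log-log growth.
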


The proposition will be  a consequence of  the following lemma.  Abbreviate
\[
R_f(\hh) = \frac{ | \Psi_f(\hh) |^2  }{ \xi(\hh) }  .
\]

\begin{Lem}\label{Lem:noncusp lemma}
Suppose  $\beta$ is any complex valued function on $(0,\infty)$ for which there are 
positive constants  $ C_1$ and $T$ 
satisfying $ 0< | \beta(t) |< e^{- C_1 t}$ for all $t> T$.  There are  $\epsilon, C_2, C_3, C_4>0$ such that
\[
\Big|    \sum_{ f\in L^\interior(m)  } 
  \beta\left(   R_f(\hh) \right)  \Big| <  C_4
|q|^{ C_3}    e^{  C_2   {}^tuA\overline{u}   }
\]
for all $\hh\in \mathcal{D}^\epsilon$.
\end{Lem}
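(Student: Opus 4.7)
The plan combines the exponential decay of $\beta$ with estimates essentially already in place. Shrinking $\epsilon$ so that (\ref{simple noncusp bound}) gives both $R_f(\hh) > T$ and $R_f(\hh) > Q_\hh(f)/4$ for every $\hh \in \mathcal{D}^\epsilon$ and $f \in L^\interior(m)$, the hypothesis on $\beta$ yields $|\beta(R_f(\hh))| \leq e^{-C_1 Q_\hh(f)/4}$, which I split as a product of two factors $e^{-C_1 Q_\hh(f)/8}$. The first of these produces the boundary decay: by (\ref{Q bound}) and the assumption $|c|^2 > 8\mathbf{c}_\mathrm{min}$ on $L^\interior(m)$ one has $Q_\hh(f) \geq 4\mathbf{c}_\mathrm{min} \coord(\hh)$, and hence $e^{-C_1 Q_\hh(f)/8} \leq e^{-C_1 \mathbf{c}_\mathrm{min} \coord(\hh)/2}$; substituting (\ref{tau coords}) rewrites the right side as $|q|^{C_3'} e^{(C_1 \mathbf{c}_\mathrm{min}/2) \cdot {}^tuA\overline{u}}$ with $C_3' = C_1 \mathbf{c}_\mathrm{min} d_\kk^{3/2} \mathrm{N}(\mathfrak{a}_0)/(2\pi) > 0$.

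The substantive task is to estimate
\[
\Theta(\hh) = \sum_{f \in L^\interior(m)} e^{-C_1 Q_\hh(f)/8}
\]
uniformly for $\hh \in \mathcal{D}^\epsilon$, ideally by a polynomial in $\coord(\hh)$, since such a factor is absorbed by a small decrease of the exponent $C_3'$. Decomposing $f = ae + \sum b_i e_i + ce'$ according to $L = \mathfrak{a} \oplus \Lambda \oplus \mathfrak{c}$, I perform the sum iteratively in the order $a, b, c$. The $a$-dependence of $Q_\hh(f)$ is confined to the third summand of (\ref{Q bound}), which equals $(2/\coord(\hh))|\delta_\kk a - w|^2$ for some $w$ depending on $b, c, z, u$. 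Poisson summation then bounds
\[
\sum_{a \in \mathfrak{a}_0} \exp\Big( - \frac{C_1}{4\coord(\hh)} |\delta_\kk a - w|^2 \Big) = O(\coord(\hh))
\]
uniformly in $w$. The subsequent $b$-sum is Gaussian in $b - cu$ with the \emph{fixed} coefficient $C_1/16$, so Poisson again produces a bound uniform in $cu$. Finally, the sum over $c \in \mathfrak{c}_0$ with $|c|^2 > 8\mathbf{c}_\mathrm{min}$ is controlled by the rapid decay $e^{-C_1 |c|^2 \coord(\hh)/16}$ coming from the first summand of (\ref{Q bound}). Collecting the three estimates gives $\Theta(\hh) = O(\coord(\hh))$; using $\coord(\hh) \leq (d_\kk^{3/2} \mathrm{N}(\mathfrak{a}_0)/\pi) \log|q|^{-1}$ from (\ref{tau coords}), this polynomial factor is absorbed into $|q|^\eta$ for any small $\eta > 0$, and taking $C_3 = C_3' - \eta$ and $C_2 = C_1 \mathbf{c}_\mathrm{min}/2$ yields the desired bound.

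The main obstacle is uniformity in $u$: the centers $w$ and $cu$ of the two Gaussian sums can be arbitrarily large as $u \to \infty$, and any naive Gaussian bound would produce exponential growth in $|u|^2$. Poisson summation bypasses this, because the leading prefactor controlling a lattice-translate Gaussian theta series depends only on the lattice and on the scale parameter, not on the translation of the Gaussian.
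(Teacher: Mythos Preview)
Your proposal is correct and follows the same overall strategy as the paper: shrink $\epsilon$ via (\ref{simple noncusp bound}), replace $\beta(R_f)$ by a Gaussian $e^{-C_1' Q_\hh(f)}$, extend the sum to $\mathfrak{a}_0 \times \Lambda \times (\mathfrak{c}_0 \smallsetminus \{0\})$, bound the result by a constant times $e^{-C_2\coord(\hh)}$, and then invoke (\ref{tau coords}). The paper simply asserts that the lattice sum is bounded by $C_4' e^{-C_2\coord(\hh)}$ after ``some elementary but slightly tedious estimates using (\ref{Q bound})''; your split $e^{-C_1 Q_\hh(f)/4} = e^{-C_1 Q_\hh(f)/8}\cdot e^{-C_1 Q_\hh(f)/8}$ together with the iterated Poisson-summation argument in $a$, $b$, $c$ is a clean and explicit way to carry out exactly that step, and your observation that Poisson summation gives bounds uniform in the Gaussian center is precisely what makes the estimate uniform in $u$.
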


\begin{proof}
By (\ref{simple noncusp bound}), we may shrink  $\epsilon$ in order to  assume that 
$R_f(\hh)>T$ for all $f\in L^\interior(m)$.  Thus
\[
\Big|    \sum_{ f\in L^\interior(m) } 
  \beta\left(   R_f(\hh) \right)  \Big|
  <   \sum_{   f\in L^\interior(m)   }   e^{-   C_1 Q_\hh(f) }
  <   \sum_{ a \in \mathfrak{a}_0 }  \sum_{   f_\Lambda \in   \Lambda  }   \sum_{ \substack{ c\in \mathfrak{c}_0 \\ c\not=0 } }
   e^{-   C_1 Q_\hh(  a e+ f_\Lambda + c e'   ) }.
\]
 Some elementary but slightly tedious estimates using  (\ref{Q bound}) show that the 
 final term on the right is bounded by $  C'_4 e^{- C_2 \xi(\hh)}$
 for some $ C'_4, C_2>0$.  The claim now follows from (\ref{tau coords}).
\end{proof}

\begin{proof}[Proof of Proposition \ref{Prop:growth I}]
The first claim is immediate from Lemma \ref{Lem:noncusp lemma}, by taking $\beta(t)=\beta_1(4\pi v t)$.
Next we bound the growth of $\partial \mathcal{E}^\interior$.  For any $f\in L^\interior(m)$ we compute, 
using 
\begin{equation}\label{derivative}
\frac{d}{dt} \beta_1 ( t )= - \frac{ 1 }{ t } e^{-t},
\end{equation}
the first derivative
\begin{align*}
\frac{ \partial }{\partial z} \beta_1(4\pi v R_f(\hh))
&=
e^{-4\pi v R_f(\hh)} \left[
\frac{\overline{\Psi}_f }{R_f } \frac{\partial \Psi_f}{\partial z} 
+ \frac{1}{\xi} \frac{\partial \xi}{\partial z}
\right] \\
&=
\delta_\kk  e^{-4\pi v R_f(\hh)} \left[
\frac{  \overline{c} \overline{\Psi}_f  }{R_f }
- \frac{ 1 }{\xi}. 
\right]
\end{align*}
Combining (\ref{Q bound}) with $|\Psi_f(\hh)|^2 = R_f(\hh) \xi(\hh)$ 
shows that on $\mathcal{D}^\epsilon$,
\[
\sqrt{d_\kk}\cdot \left|
\frac{  \overline{c} \overline{\Psi}_f  }{R_f  }
- \frac{1}{\xi} \right|  <C
\]
for a constant $C$ independent of $\hh$ and $f\in L^\interior(m)$.  Therefore
\[
\left|
\frac{ \partial }{\partial z} \beta_1(4\pi v R_f(\hh))
\right|
<  C e^{-4\pi v R_f(\hh)}.
\]
 It now follows from Lemma \ref{Lem:noncusp lemma} that
$
\left| \partial \mathcal{E}^\interior / \partial z  \right|  <  C_4 |q|^{ C_3} e^{ C_2 {}^tuA\overline{u}}
$
for some $ C_2, C_3, C_4>0$, and so
\[
\frac{\partial \mathcal{E}^\interior}{\partial q} = \frac{r}{2\pi i q} \frac{\partial \mathcal{E}^\interior}{\partial z} 
= \frac{1}{q \log|q|} \cdot F(\hh)
\]
for some continuous function $F$  vanishing  along $q=0$.  Similarly,  if we write $u={}^t [ u_1,\ldots, u_{n-2}]$ then
\[
\left|
\frac{ \partial }{\partial u_i} \beta_1(4\pi v R_f(\hh))
\right|
<  C e^{-4\pi v R_f(\hh)}
\]
for a constant $C>0$ independent of $f$, and Lemma \ref{Lem:noncusp lemma} implies
 that $\frac{\partial \mathcal{E}^\interior}{\partial u_i}$ vanishes along $q=0$.  This proves that
$\partial \mathcal{E}^\interior$ has $\log$-$\log$ growth along $q=0$.  
The proofs for $\overline{\partial} \mathcal{E}^\interior$
and $\partial\overline{\partial} \mathcal{E}^\interior$ are similar.
\end{proof}


\subsection{Behavior near the boundary, part II}


Now we turn to the more difficult analysis of $\Gr^\bndry(m,v,\hh)$.
 Suppose $f \in L^\bndry(m)$.  
 If we write  $f$ in the coordinates (\ref{f coords}), and recall that
 $\langle f, \mathfrak{a} \rangle=0$ implies  $c=0$, we see that
 \[
 \Psi_f  (\hh)  = -\delta_\kk \overline{a}  + {}^t\overline{b}  A u.
 \]
 In particular, for any $a e \in \mathfrak{a}_0 e=\mathfrak{a}$ we have
 \[
 \Psi_{ ae +f} = \Psi_f -\delta_\kk\overline{a}.
 \]

 The function $\Psi_f$ is invariant  under the action of 
$C_\Gamma \subset \mathrm{Stab}_{N_\Gamma}(f)$ on $\mathcal{D}$, and is visibly 
independent of the coordinate $z$.  Thus $\Psi_f$   defines a function on 
$C_\Gamma\backslash \mathcal{D}^\epsilon$
independent of the variable $q$ in (\ref{boundary coords}).  In other 
words, if we view $C_\Gamma\backslash \mathcal{D}^\epsilon$ as a punctured disk
bundle over $\C^{n-2}$, then $\Psi_f$ is constant on fibers.  It follows that $\Psi_f$
 extends uniquely to a holomorphic function on 
 the partial compactification $\overline{C_\Gamma\backslash \mathcal{D}^\epsilon}$.
By  (\ref{boundary KR})  the pullback of  $\mathcal{Z}^*(m)(\C)$ to 
$\overline{C_\Gamma\backslash \mathcal{D}^\epsilon}$ is 
\[
\mathcal{Z}^*(m)(\C) =    \sum_{f\in  C_\Gamma\backslash L^\bndry(m)} 
\mathcal{D}^*(f)
\] 
where $\mathcal{D}^*(f) \subset \overline{C_\Gamma\backslash \mathcal{D}^\epsilon}$
is the zero locus of $\Psi_f$.

  \begin{Prop}\label{Prop:main growth}
Suppose $\psi_m(\hh)=0$  is an equation for the divisor $\mathcal{Z}^*(m)(\C)$
on some open subset of  $\overline{C_\Gamma\backslash \mathcal{D}^\epsilon}$,
and set
\[
\mathrm{Ind}(m) = \# \{ f\in \mathfrak{a}^\perp/\mathfrak{a} : \langle f,f\rangle =m\}.
\]
The smooth function
\[
\mathcal{E}^\bndry(\hh) =    \log|\psi_m(\hh)|^2  
 -     \frac{ \mathrm{Ind}(m)  \xi(\hh)   }{  4v \mathrm{Vol}(\C/\delta_\kk\overline{\mathfrak{a}}_0) }  
+ \Gr^\bndry(m,v,\hh)  
\]
on $C_\Gamma\backslash \mathcal{D}^\epsilon$ is bounded,  and the differential forms   
$\partial \mathcal{E}^\bndry$,  $\overline{\partial}\mathcal{E}^\bndry$, and 
$\partial \overline{\partial}\mathcal{E}^\bndry$   have $\log$-$\log$ growth along the divisor $q=0$.
\end{Prop}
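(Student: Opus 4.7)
The plan is to reduce the proposition to a Kronecker-type lattice-sum identity in one complex variable, applied termwise after decomposing $L^\bndry(m)$. Every $f \in L^\bndry(m)$ has $c = 0$ in the coordinates (\ref{f coords}), so $f \in \mathfrak{a}^\perp = \mathfrak{a} \oplus \Lambda$ may be written uniquely as $f = ae + f_\Lambda$ with $a \in \mathfrak{a}_0$ and $f_\Lambda \in \Lambda$ of norm $m$. Moreover
\[
\Psi_{ae+f_\Lambda}(\hh) \;=\; w_{f_\Lambda}(u) - \delta_\kk\overline{a}
\qquad\text{where}\qquad
w_{f_\Lambda}(u) \;=\; {}^t\overline{b_{f_\Lambda}}\,A\,u
\]
depends only on $u$, so $\Gr^\bndry(m,v,\hh) = \sum_{f_\Lambda} G_{f_\Lambda}(\hh)$ with
\[
G_{f_\Lambda}(\hh) \;=\; \sum_{a \in \mathfrak{a}_0} \beta_1\!\left(\frac{4\pi v\,|w_{f_\Lambda}(u)-\delta_\kk\overline{a}|^2}{\coord(\hh)}\right).
\]
By (\ref{boundary KR}), after a Weierstrass-type renormalisation one can arrange that $\loc_m$, suitably built from the products $\prod_a \Psi_{ae+f_\Lambda}$, descends to a local equation for $\KR^*(m)(\C)$ on $\overline{C_\Gamma\backslash\mathcal{D}^\epsilon}$.

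The heart of the argument is the asymptotic analysis of $G_{f_\Lambda}$. Inserting the integral representation (\ref{beta}) of $\beta_1$ and applying Poisson summation over the rank-two lattice $\Lambda_0 = \delta_\kk\overline{\mathfrak{a}}_0 \subset \C$ transforms the $a$-sum into a sum over the Pontrjagin dual $\Lambda_0^\vee$. A direct computation shows that the $\mu = 0$ Fourier mode contributes
\[
\int_1^\infty \frac{\coord(\hh)}{4vt\,\mathrm{Vol}(\C/\Lambda_0)}\,\frac{dt}{t}
\;=\; \frac{\coord(\hh)}{4v\,\mathrm{Vol}(\C/\Lambda_0)},
\]
precisely the constant appearing in the statement of the proposition. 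The remaining modes $\mu \ne 0$ involve Gaussians in $\mu$ weighted by $e^{-2\pi i\,\mathrm{Re}(\overline\mu\,w_{f_\Lambda})}$, and by the standard manipulation of \cite{kudla97,HY-HZ} (a form of the Kronecker limit formula for $\C/\Lambda_0$) they reassemble as $-\log|\Theta_{\Lambda_0}(w_{f_\Lambda})|^2 + \mathcal{R}_{f_\Lambda}(\hh)$. Here $\Theta_{\Lambda_0}$ is a normalised Weierstrass-$\sigma$-type function on $\C/\Lambda_0$ whose zero locus is exactly $\Lambda_0$, and $\mathcal{R}_{f_\Lambda}$ is a smooth function on $\overline{C_\Gamma\backslash\mathcal{D}^\epsilon}$ which, together with all of its partial derivatives in $q, \overline q, u_i, \overline u_i$, decays like a power of $|q|$ as $q \to 0$ — this is inherited from the Gaussian decay in $\coord(\hh)$ of the $\mu \ne 0$ Fourier modes, combined with $\coord \sim \log|q^{-1}|$ given by (\ref{tau coords}). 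Summing over the $\mathrm{Ind}(m)$ vectors $f_\Lambda$ and matching $\log|\loc_m|^2$ with $\sum_{f_\Lambda}\log|\Theta_{\Lambda_0}(w_{f_\Lambda})|^2$ yields $\mathcal{E}^\bndry = \sum_{f_\Lambda}\mathcal{R}_{f_\Lambda}$, which is bounded; and $\partial\mathcal{E}^\bndry$, $\overline\partial\mathcal{E}^\bndry$, $\partial\overline\partial\mathcal{E}^\bndry$ then extend smoothly across $q = 0$, which is strictly stronger than log-log growth.

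The principal obstacle is the Poisson-summation step, and in particular verifying that the Fourier tail assembles into a smooth function of the boundary coordinates (rather than merely a convergent series) and that the theta-type factor $\Theta_{\Lambda_0}$ extends holomorphically across $q = 0$ so as to give an honest local equation for the \emph{Zariski closure} $\KR^*(m)$ in $\mathtt{M}^*$, and not just for $\KR(m)$ in the open part. The former requires a uniform estimate on the tail series and its derivatives in $w$ and $\coord$; the latter uses crucially that each $\Psi_{ae+f_\Lambda}$ for $f \in L^\bndry(m)$ is independent of $q$ and hence extends holomorphically by inspection, while the infinite product over $a \in \mathfrak{a}_0$ is made convergent by the very Gaussian factor that also produces the quadratic term in $\coord(\hh)$ via the $\mu = 0$ Fourier mode.
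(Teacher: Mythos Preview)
Your overall strategy---decompose $\Gr^\bndry$ as a sum over $f_\Lambda \in \Lambda$ with $\langle f_\Lambda,f_\Lambda\rangle = m$, then apply Poisson summation over the rank-two lattice $\Lambda_0 = \delta_\kk\overline{\mathfrak{a}}_0$ and invoke a second Kronecker limit formula to extract the $\log|\Theta|^2$ term---is a legitimate alternative to the paper's route. The paper instead compares the lattice sum to an integral via an Euler--Maclaurin argument to get boundedness (Lemmas~\ref{Lem:omega24} and~\ref{Lem:omega13}), and then controls the derivatives by direct Poisson-type estimates on the auxiliary theta sums $\sum_\eta e^{-|\Psi_f+\eta|^2/\coord_v}$ and its variants (Lemma~\ref{Lem:derivative estimates}). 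Your approach is structurally cleaner because the single Poisson step does double duty; the paper's approach is more elementary because it avoids regularizing the conditionally convergent sum $\sum_{\mu\neq 0}|\mu|^{-2}e^{2\pi i\langle\mu,w\rangle}$ that underlies the Kronecker formula.

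There is, however, a genuine error in your conclusion. You assert that $\mathcal{R}_{f_\Lambda}$ ``together with all of its partial derivatives in $q,\overline q,u_i,\overline u_i$, decays like a power of $|q|$,'' and hence that $\partial\mathcal{E}^\bndry$, $\overline\partial\mathcal{E}^\bndry$, $\partial\overline\partial\mathcal{E}^\bndry$ extend \emph{smoothly} across $q=0$. This is false. After Poisson summation the $\mu\neq 0$ tail has the form
\[
\mathcal{R}_{f_\Lambda}(\hh) \;=\; -\frac{1}{\pi\,\mathrm{Vol}(\C/\Lambda_0)}\sum_{\mu\neq 0}\frac{e^{-\pi^2|\mu|^2\coord_v}}{|\mu|^2}\,e^{2\pi i\langle\mu,w_{f_\Lambda}(u)\rangle},
\]
which indeed decays like a positive power of $|q|$ since $\coord_v \sim -C\log|q|^2$ by~(\ref{tau coords}). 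But differentiating in $q$ brings down $\partial_q\coord_v \sim -C/q$, so
\[
\partial_q \mathcal{R}_{f_\Lambda} \;=\; O\bigl(|q|^{c'-1}\bigr),
\qquad
c' \;=\; \frac{d_\kk^{3/2}\,\mathrm{N}(\mathfrak{a}_0)\,|\mu_{\min}|^2}{4v},
\]
where $|\mu_{\min}|$ is the shortest nonzero dual-lattice vector. For $v$ large enough one has $c'<1$, and then $\partial_q\mathcal{R}_{f_\Lambda}$ is unbounded as $q\to 0$. The paper's own computation confirms this: it obtains $\partial\mathcal{E}_f/\partial q = O\bigl(1/(|q|\log|q|)\bigr)$, which is log-log growth but certainly not a smooth extension. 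Your method still proves the proposition once you drop the smoothness overclaim---since $|q|^{c'}\log|q^{-1}|$ is bounded for any $c'>0$, the bound $|q|^{c'-1}$ is dominated by $1/(|q|\log|q|)$---but you should state and prove log-log growth, not smoothness.
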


 The proof of Proposition \ref{Prop:main growth} will occupy the remainder of this subsection.
We  begin by noting that
\begin{align*}
 \Gr^\bndry(m,v,\hh) 
& =  \sum_{    f \in   L^\bndry(m)  }   
 \beta_1\left(    \frac{  4\pi v |\Psi_{ f}(\hh)  |^2 }{\xi(\hh)} \right) \\
  & =\sum_{    \substack { f \in  \Lambda \\ \langle f,f\rangle =m   } } 
  \sum_{ ae  \in  \mathfrak{a} }
 \beta_1\left(    \frac{  4\pi v |\Psi_{ ae+ f}(\hh)  |^2 }{\xi(\hh)} \right) \\
  & =\sum_{    \substack { f \in  \mathfrak{a}^\perp/\mathfrak{a} \\ \langle f,f\rangle =m   } } 
  \sum_{ \eta  \in  \delta_\kk \overline{\mathfrak{a}_0 } }
 \beta_1\left(    \frac{  4\pi v |\Psi_{ f}(\hh) + \eta |^2 }{\xi(\hh)} \right).
\end{align*}
On $\overline{ C_\Gamma\backslash \mathcal{D}^\epsilon }$, the divisor
$\mathcal{Z}^*(m)(\C)$ is the sum over  
$\{ f\in \mathfrak{a}^\perp/ \mathfrak{a} : \langle f,f\rangle = m \}$ of the divisors 
 \begin{equation}\label{partial divisor}
 \sum_{\eta\in \delta_\kk\overline{\mathfrak{a}}_0} \mathrm{Div}( \Psi_f (\hh) +\eta).
\end{equation}
Fix a point $\hh_0$ on the boundary $q=0$ of 
$\overline{C_\Gamma\backslash \mathcal{D}^\epsilon}$.  If  $\psi_f(\hh)=0$
is a local equation for (\ref{partial divisor}) on some open neighborhood of $\hh_0$,
we consider, for each $f\in \mathfrak{a}^\perp/\mathfrak{a}$,  the function
\begin{equation}\label{partial error}
\mathcal{E}_f(\hh) = \log|\psi_f(\hh)|^2  
  -   \frac{ \xi(\hh)}{ 4v  \mathrm{Vol}(\C/\delta_\kk\overline{\mathfrak{a}}_0)}   +
\sum_{ \eta\in \delta_\kk\overline{\mathfrak{a}}_0 }
  \beta_1\left(    \frac{ 4\pi v  | \Psi_f ( \hh ) + \eta   |^2 }{ \xi ( \hh ) } \right).
\end{equation}
This function is defined so that
\begin{equation}\label{f main growth}
\mathcal{E}^\bndry(\hh) = 
\sum_{  \substack{  f \in  \mathfrak{a}^\perp/\mathfrak{a} \\ \langle f,f \rangle =m  } } \mathcal{E}_f(\hh).
\end{equation}
We will prove that $\mathcal{E}_f$   is bounded on a neighborhood of $\hh_0$, 
and that the differential forms  $\partial \mathcal{E}_f$,  
$\overline{\partial}\mathcal{E}_f$, and 
$\partial \overline{\partial}\mathcal{E}_f$   have $\log$-$\log$ growth along the divisor $q=0$.
Proposition  \ref{Prop:main growth} will then follow easily.

Fix complex numbers $\varpi_1$ and $\varpi_2$ such that  
$\delta_\kk\overline{\mathfrak{a}}_0=\Z\varpi_1+\Z\varpi_2$.  As $\varpi_1$ and $\varpi_2$ form 
a basis for $\C$ as a real vector space, we   may define
real valued functions $\mu$ and $\nu$ on $ \overline{ C_\Gamma \backslash \mathcal{D}^\epsilon}$ 
by the relation
\[
\Psi_f(\hh) = \mu(\hh)  \varpi_1 + \nu(\hh) \varpi_2.
\]
The function $\Psi_f$ depends on a lift of $f\in \mathfrak{a}^\perp/\mathfrak{a}$ to $\mathfrak{a}^\perp$.
As the lift varies, $\Psi_f$ is replaced by  $\Psi_f + \eta$ for $\eta\in \delta_\kk\overline{\mathfrak{a}}_0$.
Thus we may choose the lift $f$ to assume that $| \mu(\hh_0) |  \le 1/2$ and 
$|\nu(\hh_0)|  \le 1/2$.  Define a neighborhood of $\hh_0$ by 
\[
\Omega  =    \big\{\hh \in \overline{C_\Gamma\backslash \mathcal{D}^\epsilon} : 
|\mu(\hh)| <3/4,\, |\nu(\hh)| < 3/4,\, \xi_v(\hh) >1\big\},
\]
where  
\[
\xi_v(\hh)     =   \frac{   \xi(\hh)  } { 4\pi v }  .
\]
For every nonzero $\eta\in \delta_\kk\overline{\mathfrak{a}}_0$ the function $\Psi_f(\hh) +\eta$ 
is nonvanishing on  $\Omega$, and so we may take $\psi_f = \Psi_f$ as our  local equation 
for  (\ref{partial divisor}).

Set $Q(x,y)=|x\varpi_1+y\varpi_2|^2$, so that  
\[
Q(\mu(\hh),\nu(\hh))  = |\Psi_f(\hh)|^2 
\]  
and
\[
\mathcal{E}_f(\hh)  =\log \big(Q(\mu(\hh),\nu(\hh))  \big)   
- \frac{\pi \xi_v(\hh) }{  \mathrm{Vol}(\C/\delta_\kk\overline{\mathfrak{a}}_0 )} +
 \sum_{m,n\in\Z} \beta_1 \left(  \frac{ Q(m+  \mu(\hh)  , n+\nu(\hh) )  }{ \xi_v(\hh) } \right) .
\]
The restriction of  $\mathcal{E}_f$ to a function on $\Omega$ (with possible
singularities along $\Psi_f=0$ and  $q=0$) is henceforth viewed as a
function on the domain
\[
\Omega_0= \left\{ (\mu,\nu,\xi_v) \in  \R^3 :   
\begin{array}{c} |\mu| < 3/4 \\ |\nu| < 3/4 \\  \xi_v>1 \end{array}\right\} 
\]
(with possible singularities along the line $\mu=\nu=0$, and possibly blowing up as $\xi_v\to \infty$).

Define real numbers $\mathbf{a}$, $\mathbf{b}$, and $\mathbf{c}$  by 
$Q(x,y) = \mathbf{a}x^2+\mathbf{b} xy + \mathbf{c}y^2$,  and abbreviate
\[
\Delta=\sqrt{4\mathbf{a}\mathbf{c}-\mathbf{b}^2}= 2\cdot \mathrm{Vol}(\C/\delta_\kk\overline{\mathfrak{a}}_0).
\] 
We will estimate the growth of the function
\[
\sum_{ m, n\in\Z }  \beta_1 \left(  \frac{ Q(m+  \mu  , n+\nu )  }{ \xi_v } \right) 
\]
on $\Omega_0$ by comparing it with the integral
\[
   \int_{\R\times\R}   \beta_1 \left(  \frac{ Q(x  ,y )  }{ \xi_v } \right)   \, dx\, dy 
 =  \frac{ 2 \xi_v } {\Delta}  \int_{\R\times\R}  \beta_1( x^2+y^2 )  \, dx\, dy   
=\frac{  \pi \xi_v } {\mathrm{Vol}(\C/\delta_\kk\overline{\mathfrak{a}}_0) } .
\]
  In order to compare the sum and integral, we decompose
\begin{equation}\label{omega decomp}
\sum_{ m, n\in\Z }   \beta_1 \left(  \frac{ Q(m+  \mu  , n+\nu )  }{ \xi_v } \right)   
=  \frac{  \pi \xi_v } {\mathrm{Vol}(\C/\delta_\kk\overline{\mathfrak{a}}_0) }   +   \sum_{i=1}^4\omega_i(\mu,\nu,\xi_v)
\end{equation}
in which 
\begin{align*}
\omega_1(\mu,\nu,\xi_v)  
&= \sum_m   \beta_1 \left(  \frac{ Q(m+\mu  ,\nu )  }{ \xi_v } \right)  
-   \int_\R  \beta_1 \left(  \frac{ Q(x  ,\nu )  }{ \xi_v } \right)   \,   dx  \\
\omega_2(\mu,\nu,\xi_v) 
&=  \sum_{n\not=0}\left[ \sum_{m}   \beta_1 \left(  \frac{ Q(m+\mu  ,n+\nu )  }{ \xi_v } \right)    
-\int_\R  \beta_1 \left(  \frac{ Q( x  ,n+\nu )  }{ \xi_v } \right)   \,   dx \right]  \\
\omega_3(\mu,\nu,\xi_v) 
&=  \int_{|x|<1} \left[\sum_{n}   \beta_1 \left(  \frac{ Q( x  ,n+\nu )  }{ \xi_v } \right) 
-   \int_\R \beta_1 \left(  \frac{ Q( x  ,y )  }{ \xi_v } \right)  \,   dy \right] \, dx \\
\omega_4(\mu,\nu,\xi_v) 
&=  \int_{|x|>1}  \left[\sum_{n}   \beta_1 \left(  \frac{ Q( x  ,n+\nu )  }{ \xi_v } \right) 
-  \int_\R \beta_1 \left(  \frac{ Q( x  ,y )  }{ \xi_v } \right)  \,   dy \right]  \, dx,
\end{align*}
and study each term individually.

\begin{Lem}\label{Lem:omega24}
The functions $\omega_2$ and $\omega_4$ are bounded on $\Omega_0$.
\end{Lem}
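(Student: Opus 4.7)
The plan is to reduce both $\omega_2$ and $\omega_4$ to controlled error terms via Poisson summation, exploiting the fact that the summands are uniformly bounded away from the singularity of $\beta_1$ at the origin. Indeed, for $\omega_2$ one has $n\neq 0$ and $|\nu|<3/4$, so $|n+\nu|\geq 1/4$; for $\omega_4$ one has $|x|>1$. Positive definiteness of $Q$ then forces $Q(x,n+\nu)/\coord_v$ (resp.\ $Q(x,n+\nu)/\coord_v$) to stay bounded below away from $0$, so the function being summed is Schwartz-class in the remaining variable and Poisson summation applies without difficulty.

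For $\omega_2$, I would fix $n\neq 0$, set $g_n(x)=\beta_1(Q(x,n+\nu)/\coord_v)$, and use
\[
\sum_{m\in\Z}g_n(m+\mu)-\int_\R g_n(x)\,dx=\sum_{k\neq 0}\hat g_n(k)\,e^{2\pi i k\mu}.
\]
To estimate $\hat g_n(k)$, I would substitute the integral representation $\beta_1(t)=\int_1^\infty e^{-tu}\,du/u$, apply Fubini, and perform the resulting Gaussian integral in $x$ in closed form by completing the square in $Q$. This expresses $\hat g_n(k)$ as $\sqrt{\pi\coord_v/\mathbf{a}}\,e^{i\pi k\mathbf{b}(n+\nu)/\mathbf{a}}$ times a single $u$-integral of the form $\int_1^\infty e^{-Au-B/u}u^{-3/2}du$, with $A=\Delta^2(n+\nu)^2/(4\mathbf{a}\coord_v)$ and $B=\pi^2k^2\coord_v/\mathbf{a}$. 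Bounding this integral by $\int_0^\infty$ and using the Bessel identity
\[
\int_0^\infty e^{-Au-B/u}u^{-3/2}\,du=\sqrt{\pi/B}\,e^{-2\sqrt{AB}},
\]
the $\sqrt{\coord_v}$ prefactor cancels exactly against $\sqrt{1/B}$ and, since $2\sqrt{AB}=\pi|k||n+\nu|\Delta/\mathbf{a}$, one obtains the clean bound
\[
|\hat g_n(k)|\leq |k|^{-1}\exp\!\bigl(-\pi|k||n+\nu|\Delta/\mathbf{a}\bigr),
\]
with constant independent of $\coord_v$. Summing over $n\neq 0$ and $k\neq 0$ gives a convergent double series, proving $|\omega_2|\leq C$ on $\Omega_0$.

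The analysis of $\omega_4$ is entirely parallel: for fixed $|x|>1$ apply Poisson in the $y$-variable to $y\mapsto\beta_1(Q(x,y)/\coord_v)$ to obtain, by the same computation with $\mathbf{a}$ and $\mathbf{c}$ swapped, the bound $|\hat g_x(\ell)|\leq|\ell|^{-1}\exp(-\pi|\ell||x|\Delta/\mathbf{c})$. The outer integral $\int_{|x|>1}\sum_{\ell\neq 0}|\ell|^{-1}\exp(-\pi|\ell||x|\Delta/\mathbf{c})\,dx$ is then elementary and finite. The only real obstacle in the whole argument is securing the exact cancellation of the $\sqrt{\coord_v}$ Gaussian prefactor; this is precisely what the Bessel identity delivers, and it is why a crude AM--GM estimate on the $u$-integrand is insufficient.
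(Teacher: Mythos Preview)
Your proposal is correct and follows essentially the same route as the paper: Poisson summation in the inner variable, insertion of the integral representation $\beta_1(t)=\int_1^\infty e^{-tu}\,du/u$, Fubini, the explicit Gaussian integral, and then the Bessel-type identity to produce the bound $|k|^{-1}\exp(-\pi|k||n+\nu|\Delta/\mathbf{a})$, with the $\omega_4$ case obtained by symmetry. The only cosmetic difference is that the paper first substitutes $u\mapsto \coord_v/u$ and uses the $u^{-1/2}$ form of the Bessel integral, whereas you work directly with the $u^{-3/2}$ form; the resulting bounds are identical.
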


\begin{proof}
For any nonzero $y\in\R$ there is a Fourier expansion
\[
 \sum_{m\in \Z}   \beta_1 \left(  \frac{ Q( m+\mu  ,y )  }{ \xi_v } \right) 
  =\sum_{k\in \Z} g_k(y,\xi_v)  e^{2\pi i k \mu}
\]
in which
\begin{eqnarray*}
g_k(y,\xi_v)  & = & 
\int_{-\infty}^\infty e^{-2\pi i k x} \beta_1 \left(  \frac{ Q( x  ,y )  }{ \xi_v } \right)   \, dx \\
& = &
\int_1^\infty  \left[\int_{-\infty}^\infty e^{-2\pi i k x}  e^{-  \frac{ u Q(x,y)}{ \xi_v } }\, dx \right]\,\frac{du}{u} \\
&=&
\int_1^\infty  \left[   \sqrt{\frac{\pi \xi_v }{\mathbf{a}u} } e^{-\frac{k^2\pi^2 \xi_v }{\mathbf{a}u}} e^{\frac{i\mathbf{b}k\pi y}{\mathbf{a}}}
e^{\frac{-\Delta^2 uy^2}{4\mathbf{a} \xi_v }}          \right]\,\frac{du}{u} \\
&=&
e^{\frac{i\mathbf{b}k\pi y}{\mathbf{a}}}   \sqrt{\frac{\pi }{\mathbf{a}} }    \int_0^{\xi_v}  e^{-\frac{k^2\pi^2 u}{\mathbf{a}}} 
e^{\frac{-\Delta^2 y^2}{4\mathbf{a}u }}          \,\frac{du}{\sqrt{u}} .
\end{eqnarray*}
This allows us to estimate, for $k\not=0$,
\[
|g_k(y,\xi_v)|  < 
\sqrt{\frac{\pi }{\mathbf{a}} }  \int_0 ^\infty     e^{-\frac{k^2\pi^2 u}{\mathbf{a}}} 
e^{\frac{-\Delta^2 y^2}{4\mathbf{a}u}}     \,\frac{du}{\sqrt{u}}  \\
=
\frac{1}{|k|} \cdot e^{-\frac{\Delta \pi  |ky|  }{\mathbf{a}}}.
\]
It follows that 
\begin{eqnarray*} \lefteqn{
\left| \sum_{m\in \Z}   \beta_1 \left(  \frac{ Q( m+\mu  ,y )  }{ \xi_v } \right)  
 - \int_{-\infty}^\infty   \beta_1 \left(  \frac{ Q( x  ,y )  }{ \xi_v } \right)  \, dx
\right|   } \\
& < &     \sum_{k\not=0} |g_k(y,\xi_v)|  <     \sum_{k\not=0}   e^{-\frac{\Delta \pi  |ky|  }{\mathbf{a}}} 
<\frac{C}{y} e^{- \frac{\Delta |y|}{\mathbf{a}}}
\end{eqnarray*}
for some constant $C$ independent of $\mu$ and $\xi_v$.  
Taking $y=n+\nu$ and summing over all nonzero $n$ shows that
$\omega_2$ is bounded on $\Omega_0$.  

Similarly
\[
\left| \sum_{n\in \Z}   \beta_1 \left(  \frac{ Q( x, n+\nu  )  }{ \xi_v } \right) 
- \int_{-\infty}^\infty    \beta_1 \left(  \frac{ Q( x  ,y )  }{ \xi_v } \right)  \, dy
\right|  <\frac{C}{x} e^{- \frac{\Delta |x|}{\mathbf{a}}}
\]
for some constant $C$ independent of $\nu$ and $\xi_v$, and integrating 
over $|x|>1$ shows that $\omega_4$ is bounded.
\end{proof}

Estimating the behavior of $\omega_1$ and $\omega_3$  requires a little more work.

\begin{Lem}\label{Lem:omega13}
The functions $\omega_1(\mu,\nu,\xi_v) + \log(Q(\mu,\nu))$ and $\omega_3(\mu,\nu,\xi_v)$
are bounded on $\Omega_0$.
\end{Lem}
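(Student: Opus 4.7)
The approach is to apply Poisson summation (Fourier expansion on $\R/\Z$) to each of $\omega_1$ and $\omega_3$, recognizing the subtracted integrals as the zeroth Fourier coefficients of the respective summands. The nonzero Fourier coefficients were essentially already computed in the proof of Lemma~\ref{Lem:omega24}, and the Laplace-type identity
\[
\int_0^\infty e^{-\alpha u -\beta/u}\, \frac{du}{\sqrt{u}} = \sqrt{\pi/\alpha}\, e^{-2\sqrt{\alpha\beta}} \qquad (\alpha,\beta>0)
\]
allows us to evaluate the \emph{infinite} version of those Fourier coefficients in closed form, up to an exponentially small tail from $\coord_v$ to $\infty$.

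For $\omega_3$ the argument is short. For each $x\ne 0$, the form $Q(x,\cdot)$ is positive definite in $y$ (its discriminant is $x^2(\mathbf{b}^2-4\mathbf{a}\mathbf{c})<0$), so $y\mapsto \beta_1(Q(x,y)/\coord_v)$ is smooth and rapidly decreasing, and Poisson summation in $n$ yields
\[
\sum_n \beta_1(Q(x,n+\nu)/\coord_v) - \int_\R \beta_1(Q(x,y)/\coord_v)\, dy = \sum_{k\ne 0} h_k(x,\coord_v)\, e^{2\pi i k\nu},
\]
where $h_k$ has the same form as $g_k$ with the roles of $\mathbf{a}$ and $\mathbf{c}$ swapped. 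The calculation of Lemma~\ref{Lem:omega24} gives $|h_k(x,\coord_v)| \le |k|^{-1} e^{-|k|\pi\Delta|x|/\mathbf{c}}$, and integrating the absolute value over $|x|<1$ (the point $x=0$ has measure zero) produces the uniform bound $|\omega_3| \le \sum_{k\ge 1} 4\mathbf{c}/(\pi\Delta k^2) < \infty$.

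For $\omega_1$ we work on the open locus $\nu\ne 0$, where $\beta_1(Q(\cdot,\nu)/\coord_v)$ is smooth. Poisson summation in $m$ gives $\omega_1 = \sum_{k\ne 0} g_k(\nu,\coord_v)\, e^{2\pi i k\mu}$, and applying the Laplace identity above to the formula for $g_k$ from Lemma~\ref{Lem:omega24} shows
\[
g_k(\nu,\coord_v) = \frac{1}{|k|}\, e^{i\pi k\mathbf{b}\nu/\mathbf{a}}\, e^{-|k|\pi\Delta|\nu|/\mathbf{a}} - E_k(\nu,\coord_v),
\]
where the tail $E_k$ satisfies $\sum_{k\ne 0}|E_k(\nu,\coord_v)| = O(e^{-\pi^2 \coord_v/\mathbf{a}})$ uniformly. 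Setting $z = z(\mu,\nu) = e^{2\pi i\mu + i\pi\mathbf{b}\nu/\mathbf{a}}\,e^{-\pi\Delta|\nu|/\mathbf{a}}$, which satisfies $|z|<1$ for $\nu\ne 0$, summing the resulting geometric series yields
\[
\omega_1(\mu,\nu,\coord_v) = -\log|1-z(\mu,\nu)|^2 + O(1)
\]
uniformly on $\Omega_0\cap\{\nu\ne 0\}$. A direct Taylor expansion gives $|1-z|^2 = (4\pi^2/\mathbf{a})\,Q(\mu,\nu) + O((\mu^2+\nu^2)^{3/2})$, and the ratio $|1-z(\mu,\nu)|^2/Q(\mu,\nu)$ extends to a continuous, strictly positive function on the compact square $\{|\mu|,|\nu|\le 3/4\}$: its only possible zero is $z=1$, which forces $\nu=0$ and $\mu\in\Z$, hence $(\mu,\nu)=(0,0)$, precisely where $Q$ also vanishes to second order with matching leading coefficient. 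Therefore $\omega_1 + \log Q(\mu,\nu) = -\log(|1-z|^2/Q(\mu,\nu)) + O(1)$ is bounded on $\Omega_0\cap\{\nu\ne 0\}$, and since both sides are continuous on $\Omega_0\setminus\{\mu=\nu=0\}$, the bound extends to all of $\Omega_0$.

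The principal obstacle is justifying the Poisson formula despite the logarithmic singularity of $\beta_1$ at $0$. For $\omega_3$ the singularity lies on the measure-zero set $x=0$ and causes no trouble; for $\omega_1$ the singularity at $Q(x,\nu)=0$ forces the initial restriction $\nu\ne 0$ in the Fourier-analytic step, with the conclusion on all of $\Omega_0$ recovered by the continuity argument above.
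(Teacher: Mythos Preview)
Your argument is correct and takes a genuinely different route from the paper's. The paper handles $\omega_1$ and $\omega_3$ by Euler--Maclaurin summation: it bounds the second derivatives $\phi_{xx}$, uses the Bernoulli-polynomial remainder to compare sum and integral, and then explicitly evaluates $\int \log Q(x,y)\,dx$ in elementary terms to isolate the $\log Q(\mu,\nu)$ singularity by hand. Your approach instead continues the Fourier analysis of Lemma~\ref{Lem:omega24}: you extend the integral for $g_k$ to $(0,\infty)$, evaluate it via the Laplace identity, and sum the resulting series to the closed form $-\log|1-z(\mu,\nu)|^2$; the match with $\log Q(\mu,\nu)$ then comes from the second-order Taylor expansion $|1-z|^2 = (4\pi^2/\mathbf{a})\,Q(\mu,\nu)\bigl(1+O(\sqrt{\mu^2+\nu^2})\bigr)$. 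This is more conceptual --- it identifies the logarithmic singularity of $\omega_1$ with a concrete automorphic-flavored object --- and it reuses the Fourier coefficients already computed, at the cost of invoking the Laplace integral identity and requiring the continuity extension across $\nu=0$. The paper's Euler--Maclaurin argument is more self-contained and elementary, needing no special-function identities. One small point worth making explicit in your write-up: the extension of the bound from $\{\nu\neq 0\}$ to all of $\Omega_0$ rests on the continuity of $\omega_1+\log Q(\mu,\nu)$ across $\nu=0$, which follows from the integrability of $\log Q(x,0)$ near $x=0$ and dominated convergence for the integral term; this is routine but deserves a sentence.
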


\begin{proof}
Abbreviate
\[
\phi(x,y) =  \beta_1 \left(  \frac{ Q( x  ,y )  }{ \xi_v } \right).
\]
Computing the partial derivative of $\phi(x,y)$ with respect to $x$ we find
\[
\phi_x(x,y) = -\frac{Q_x(x,y) }{ Q(x,y)  }  \cdot  e^{-Q(x,y)/\xi_v},
\]
which shows that
\[
|\phi_x(x,y)| < \frac{|Q_x(x,y)|}{Q(x,y)}  = \frac{1}{|x|} \cdot 
\frac{  |   2\mathbf{a} + yx^{-1}  \mathbf{b} |  }
{ | \varpi_1+yx^{-1}\varpi_2|^2 } < \frac{ C_1}{|x|}
\]
for some constant $ C_1$ independent of $x$, $y$, and $\xi_v$.  Similarly
\[
\phi_{xx} (x,y)|   = 
\left[
\frac{  Q_x(x,y)^2  }{  \xi_v Q(x,y) }  + \frac{    Q_x(x,y)^2 }{Q(x,y)^2} - \frac{ Q_{xx}(x,y)    }{Q(x,y)}  \right]
 e^{-Q(x,y)/ \xi_v }
\]
and so
\begin{align*}
|\phi_{xx} (x,y) |  &  \le 
\frac{  |Q_x(x,y)|^2  }{  \xi_v Q(x,y) }  e^{-Q(x,y) / \xi_v } 
+ \frac{  |  Q_x(x,y)^2 - Q_{xx}(x,y) Q(x,y) |  }{Q(x,y)^2}  \\
& < \frac{ C_2}{\xi_v}  e^{-Q(x,y)/\xi_v}  +   \frac{ C_2}{Q(x,y)}
\end{align*}
for some constant $ C_2$ independent of $x$, $y$, and $\xi_v$.  It
follows easily that
\[
\int_{1/4}^\infty |\phi_{xx} (x,y)|\, dx <  C_3
\]
for some $ C_3$ independent of $y$ and $\xi_v$. 
Let $b_2(y) = y^2-y+1/6$ be the second Bernoulli polynomial.  
If we apply the   Euler-Maclaurin summation formula
\[
\sum_{m=1}^N g(m) = \int_1^N g(x)\, dx + \frac{g(1)+g(N)}{2} + \frac{ g'(N)-g'(1)}{12}
- \int_1^N g''(x) \cdot \frac{ b_2(x-\lfloor x \rfloor ) }{2} \, dx
\]
with  $g(x)=\phi(x+\mu,y)$ and let $N\to \infty$, the above bounds show that 
\begin{equation}\label{euler summation}
\left|
\sum_{m=1}^\infty \phi(m+\mu,y) - \int_1^\infty \phi(x+\mu,y)\, dx - \frac{\phi(1+\mu,y)}{2}
\right|  < C_4
\end{equation}
for some $ C_4$ independent of $\mu$, $y$, and $\xi_v$.

We next  estimate 
\[
\left|   \frac{\phi(\mu,y)}{2}   +  \frac{\phi(1+\mu,y)}{2} -  \int_0^1 \phi  (x+\mu,y)\, dx  \right|
\]
for small values of $y$.   Set 
\[
L(x,y) = - \log \left(\frac{Q(x,y) }{\xi_v}\right) 
\]
 As $\beta_1(s)+\log(s)$ is bounded on compact subsets of $\R^{\ge 0}$, 
 there is some  $ C_5$,  independent of 
 $x$, $y$, $\mu$, and $\xi_v$, such that
\[
|  \phi(x+\mu,y) - L(x+\mu,y) | <  C_5,
\]
provided we restrict to  $x$ and $y$ to, say, the closed interval $[-2,2]$.  
Using elementary calculus we compute the indefinite integral
\[
\int \log (Q(x,y)) \, dx = 
-2  x+ \left(\frac{\mathbf{b} y}{2 \mathbf{a} }+x\right) \log(Q(x,y))  
 + \frac{y \Delta}{\mathbf{a}}  \cdot \arctan\left(  \frac{ \mathbf{b} y +2\mathbf{a}x}{y \Delta }\right),
\]
from which it follows that
\begin{eqnarray*}\lefteqn{
 \int_{0}^1 L(x+\mu,y)  \, dx  - \frac{L(\mu,y) }{2} - \frac{ L (1+\mu,y ) }{2}  }  \\
&=&   
2   + \left(\frac{\mathbf{b} y}{2 \mathbf{a} }+\mu + \frac{1}{2}\right)  \big[ \log(Q(\mu,y))   -  \log(Q(1+\mu,y))   \big]
 \\
& &  + \frac{y \Delta}{\mathbf{a}}   \left[  \arctan\left( \frac{\mathbf{b}}{\Delta}+ \frac{ 2\mathbf{a}\mu}{y \Delta } \right)
 - \arctan\left(  \frac{\mathbf{b}}{\Delta} + \frac{ 2\mathbf{a}(1+\mu)}{y \Delta }\right)  \right].
\end{eqnarray*}
For any $\alpha$ and $\beta$ the function 
$y\cdot \mathrm{arctan}(\alpha+\beta y^{-1})$ is continuous at $y=0$.
From what we have said, there is a function $ C_6= C_6(\mu,y,\xi_v)$ bounded on the domain 
$|\mu|\le 3/4$, $|y|<1$, $1<\xi_v$ and satisfying
\begin{eqnarray*}\lefteqn{
 \int_{0}^1 \phi(x+\mu,y)  \, dx  - \frac{\phi(\mu,y ) }{2} - \frac{ \phi(1+\mu,y ) }{2}  }  \\
&=&   
2   + \left(\frac{\mathbf{b} y}{2 \mathbf{a} }+\mu + \frac{1}{2}\right)  \big[ \log(Q(\mu,y))   -  \log(Q(1+\mu,y))   \big] +  C_6.
\end{eqnarray*}
If we combine this with (\ref{euler summation}) we obtain
\begin{eqnarray*}\lefteqn{
\frac{\phi(\mu,y)}{2}  + \sum_{m=1}^\infty \phi(m+\mu,y )   =   \int_0^\infty \phi(x+\mu,y )\, dx  } \\
& & - \left(\frac{\mathbf{b} y}{2 \mathbf{a} }+\mu + \frac{1}{2}\right)  \big[ \log(Q(\mu,y))   -  \log(Q(\mu+1,y))   \big]
+  C_7
\end{eqnarray*}
for some $ C_7= C_7(\mu , y ,\xi_v)$ bounded on the domain 
$|\mu| < 3/4$, $|y|<1$, and $1<\xi_v$.
Repeating the entire argument with $\mu$ replaced by $-\mu$ and $\varpi_1$ replaced by $-\varpi_1$
shows that
\begin{eqnarray*}\lefteqn{
\frac{\phi(\mu,y)}{2}  + \sum_{m=-\infty}^{-1} \phi(m+\mu,y)   =   \int_{-\infty}^0 \phi(x+\mu,y)\, dx  } \\
& & - \left(\frac{-\mathbf{b} y}{2 \mathbf{a} }-\mu + \frac{1}{2}\right)  \big[ \log(Q(\mu,y))   -  \log(Q(\mu-1,y))   \big]
+  C_7
\end{eqnarray*}
Adding these two estimates together, 
we have proved the existence of a constant $C$, independent of $y$, $\mu$, and $\xi_v$, such that
\begin{equation}\label{second bound}
\left|    \sum_{m=-\infty}^\infty  \phi(m+\mu,y) -  \int_{-\infty}^\infty  \phi(x,y)  \, dx  + \log(Q(\mu,y))
\right|  <C
 \end{equation}
whenever $|y|<1$.   Taking $y=\nu$  shows that $\omega_1(\mu,\nu,\xi_v) + \log(Q(\mu,\nu))$
 is bounded on $\Omega_0$.

Reversing the roles of $x$ and $y$ in the discussion leading to  
(\ref{second bound}) shows that 
\[
 \sum_{n\in\Z}\phi(x ,n+\nu)  - \int_{-\infty}^\infty   \phi(x,y)\, dy + \log(Q(x,\nu)) 
\]
is bounded independently of $\nu$ and $\xi_v$ on the domain  $|x|<1$.
Using the integrability of $\log(x^2)$ near $x=0$, it is easy to check that
\[
 \int_{-1}^1 \log(Q(x,\nu))\, dx   
\]
is bounded as $\nu$ varies over $|\nu|< 3/4$.  It  follows that $\omega_3$ is bounded on $\Omega_0$.
\end{proof}

The boundedness of $\mathcal{E}_f$ on $\Omega_0$, and hence on $\Omega$, 
is now a consequence of (\ref{omega decomp}), 
Lemma \ref{Lem:omega24}, and Lemma \ref{Lem:omega13}.  The desired estimates on the 
growth of $\partial \mathcal{E}_f$, $\overline{\partial} \mathcal{E}_f$, and $\partial\overline{\partial} \mathcal{E}_f$ will require
the following lemma.

\begin{Lem}\label{Lem:derivative estimates}
For any $k>0$, on the domain $\Omega$ 
\[
\sum_{\eta\in \delta_\kk\overline{\mathfrak{a}}_0}  \mathrm{exp}\left( - \frac{  |\Psi_f +\eta|^2  }{ \xi_v } \right)   
=   \frac{\pi \xi_v}{\mathrm{Vol}(\C/\delta_\kk\overline{\mathfrak{a}}_0)} +  O\left( 1/\xi_v^k \right)
\]
\[
\sum_{\eta\in \delta_\kk\overline{\mathfrak{a}}_0}  \mathrm{exp}\left( - \frac{  |\Psi_f+\eta|^2 }{\xi_v} \right)   ( \Psi_f+\eta)  
= O(1/\xi_v^k)
\]
\[
\sum_{\eta\in \delta_\kk\overline{\mathfrak{a}}_0}  \mathrm{exp}\left( - \frac{  |\Psi_f +\eta|^2 }{\xi_v} \right)  
 |\Psi_f +\eta|^2
= \frac{\pi \xi_v^2}{\mathrm{Vol}(\C/\Lambda)}+O( 1/\xi_v^k ) 
\]
and
\[
\sum_{ \substack{\eta\in \delta_\kk\overline{\mathfrak{a}}_0 \\ \eta \not=0}  }    
 \mathrm{exp}\left( - \frac{  |\Psi_f + \eta|^2 }{ \xi_v } \right)
  \cdot \frac{1}{ (  \Psi_f +\eta )  }  =  O(\log(\xi_v))  .
 \]
\end{Lem}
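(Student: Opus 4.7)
The plan is to prove all four estimates by Poisson summation on the lattice $L=\delta_\kk\overline{\mathfrak{a}}_0\subset\C$. Parametrize $L$ by $\eta=m\varpi_1+n\varpi_2$, so that $w_\eta:=\Psi_f+\eta = (\mu+m)\varpi_1+(\nu+n)\varpi_2$ and $|w_\eta|^2=Q(\mu+m,\nu+n)$, with $\Delta=2\cdot\mathrm{Vol}(\C/L)$ the square root of the discriminant of $Q$.

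For the first three estimates, I would apply Poisson summation to the Schwartz function $\phi(w)=p(w,\overline{w})\,e^{-|w|^2/\coord_v}$ with $p=1,\,w,\,|w|^2$ respectively. Each shifted Gaussian has Fourier transform of the form (polynomial in $\coord_v$ and $\xi$)$\cdot e^{-\pi^2\coord_v Q^*(\xi)}$, where $Q^*$ denotes the dual positive definite form. The main term comes from the $\xi=0$ Fourier mode: $\int e^{-|w|^2/\coord_v}\,d^2w / \mathrm{Vol}(\C/L) = \pi\coord_v/\mathrm{Vol}(\C/L)$ in the first case; the second zero-mode integral vanishes by the odd symmetry $w\mapsto -w$; and a direct computation (or differentiating the first case in $\coord_v$) gives $\pi\coord_v^2/\mathrm{Vol}(\C/L)$ in the third. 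All remaining Fourier modes contribute sums bounded by $O(\coord_v^N e^{-c\coord_v})$ for some fixed $N$ and $c>0$, which is $O(\coord_v^{-k})$ for every $k$.

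The fourth estimate is the main obstacle, because $1/w$ has a pole at $w=0$ that prevents direct Poisson summation. On $\Omega$ the constraints $|\mu|,|\nu|<3/4$ ensure $w_\eta\neq 0$ for every $\eta\neq 0$, so we may use the Gaussian integral representation
\[
\frac{e^{-|w_\eta|^2/\coord_v}}{w_\eta}=\overline{w_\eta}\int_{1/\coord_v}^\infty e^{-s|w_\eta|^2}\,ds.
\]
After justifying Fubini by absolute convergence of the iterated sum of absolute values, the sum of interest becomes $\int_{1/\coord_v}^\infty \tilde{E}(s)\,ds$, where $\tilde{E}(s)=\sum_{\eta\neq 0}\overline{w_\eta}\,e^{-s|w_\eta|^2}$. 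Applying the Poisson argument of the second estimate (with $1/s$ in place of $\coord_v$ and $\overline{w}$ in place of $w$, the zero mode again vanishing by symmetry) to the full lattice sum yields $\sum_{\eta\in L}\overline{w_\eta}\,e^{-s|w_\eta|^2} = O(s^{-2}e^{-c/s})$, which is uniformly bounded on $(0,1]$ and tends to $0$ as $s\to 0^+$. Subtracting the $\eta=0$ term $\overline{\Psi_f}\,e^{-s|\Psi_f|^2}$, which is bounded by $|\Psi_f|$ on $\Omega$, gives $|\tilde{E}(s)|=O(1)$ uniformly on $(0,1]$. For $s\geq 1$, the positive lower bound $|w_\eta|\geq c_0>0$ (for $\eta\neq 0$ on $\Omega$) yields the direct exponential estimate $|\tilde{E}(s)|\leq Ce^{-cs}$. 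Integrating, $\left|\int_{1/\coord_v}^\infty \tilde{E}(s)\,ds\right| = O(1)$, which is in particular $O(\log\coord_v)$.
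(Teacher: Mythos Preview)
Your treatment of the first three estimates is correct and coincides with the paper's: both arguments are Poisson summation (the paper phrases it as Fourier expansion of the periodized sum), and the main terms drop out of the $\xi=0$ mode exactly as you indicate.

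For the fourth estimate your argument is correct but genuinely different from the paper's. The paper does not use an integral representation; instead it exploits the odd symmetry $\phi_i(-x,-y)=-\phi_i(x,y)$ of the summand to rewrite the sum over $(m,n)\neq(0,0)$ as half the sum of differences $\phi_i(m+\mu,n+\nu)-\phi_i(m-\mu,n-\nu)$, bounds each difference by the mean value theorem via explicit estimates on $\partial_x\phi_i,\partial_y\phi_i$, and then dominates the resulting lattice sum by an integral that evaluates to $\beta_1(C/\coord_v)=O(\log\coord_v)$. Your route writes $e^{-|w_\eta|^2/\coord_v}/w_\eta=\overline{w_\eta}\int_{1/\coord_v}^\infty e^{-s|w_\eta|^2}\,ds$, interchanges sum and integral, and applies Poisson on each $s$-slice; since the zero mode vanishes by odd symmetry and the $\eta=0$ correction is bounded on $\Omega$, you actually obtain the stronger bound $O(1)$. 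Both approaches are valid: the paper's is more elementary (no Fubini, no Fourier transform of $\overline{w}\,e^{-s|w|^2}$ needed), while yours is slicker and yields a sharper estimate than the lemma asserts. One small point worth making explicit in a write-up is the uniform (in $\mu,\nu$) positive lower bound $|w_\eta|\ge c_0$ for $\eta\neq 0$ on $\Omega$, which you use both for Fubini and for the $s\ge 1$ tail; this follows from $|\mu|,|\nu|<3/4$.
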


\begin{proof}
Recalling that 
\[
\sum_{\eta\in \delta_\kk\overline{\mathfrak{a}}_0}  \mathrm{exp}\left( - \frac{  |\Psi_f +\eta|^2  }{ \xi_v } \right) 
=
\sum_{m,n\in \Z}    \mathrm{exp}\left( - \frac{  Q(m+\mu,n+\nu)  }{ \xi_v } \right) ,
\]
there is a Fourier expansion 
\[
\sum_{\eta\in \delta_\kk\overline{\mathfrak{a}}_0}  \mathrm{exp}\left( - \frac{  |\Psi_f +\eta|^2  }{ \xi_v } \right) 
=
 \xi_v \sum_{k,\ell \in \Z }  A \big(k \sqrt{\xi_v} ,\ell \sqrt{\xi_v} \big) \cdot  e^{2\pi i  k\mu} e^{2\pi i \ell \nu}
\]
in which 
\[
A(s,t)  =  \int \int    e^{ -   Q(x,y)  }     e^{-2\pi i s x} e^{-2\pi i t y} \, dx\, dy 
\]
is the Fourier transform of $e^{-Q(x,y)}$.  In particular $A(s,t)$ is a Schwartz function, and it follows that
\[
\sum_{\eta\in \delta_\kk\overline{\mathfrak{a}}_0}  \mathrm{exp}\left( - \frac{  |\Psi_f +\eta|^2  }{ \xi_v } \right) 
   =  \xi_v A(0,0)+ O(1/\xi_v^k) 
= \frac{ 2\pi \xi_v}{\Delta} +O( 1/\xi_v^k ). 
\]
This proves the first claim.  The proof of the second  and third claims are similar.

For the fourth claim, set
\begin{align*}
\phi_1 ( \mu , \nu)  &=  \mathrm{exp}\left( - \frac{  Q(\mu,\nu)  }{ \xi_v } \right) \cdot \frac{\mu}{Q(\mu,\nu)} \\
\phi_2 ( \mu , \nu)  &=  \mathrm{exp}\left( - \frac{  Q(\mu,\nu)  }{ \xi_v } \right) \cdot \frac{\nu}{Q(\mu,\nu)}
\end{align*}
so that 
\[
\sum_{ \substack{\eta\in \delta_\kk\overline{\mathfrak{a}}_0 \\ \eta \not=(0,0)}  }    
 \mathrm{exp}\left( - \frac{  |\Psi_f + \eta|^2 }{ \xi_v } \right)  \cdot \frac{1}{ (  \Psi_f +\eta )  }
 = \phi_1(\mu,\nu) \overline{\varpi}_1 + \phi_2(\mu,\nu) \overline{\varpi}_2.
  \]
The Fourier analysis argument used above breaks down  due to the  
singularity of $\phi_i(\mu,\nu)$ at the origin,
so we resort to less sophisticated methods. The relation $\phi_i( -x , -y  ) = - \phi_i( x , y  )$ implies that
\[
\big|  \sum_{ \substack{m,n\in \Z \\ (m,n)\not= (0,0)}}   \phi_i(m+\mu,n+\nu)  \big|
\le \sum_{ \substack{m,n\in \Z \\ (m,n)\not= (0,0)}} \big| \phi_i(m+\mu,n+\nu) - \phi_1(m-\mu,n-\nu) \big|.
\]
By directly computing partial derivatives, it is easy to see that there is a constant $C$, independent of
$x$, $y$, and $\xi_v$, such that
\begin{align*}
\big| \frac{\partial}{\partial x}  \phi_i(x,y) \big|  &<  
C \cdot \mathrm{exp}\left( - \frac{Q(x,y)}{ \xi_v } \right)  \left[  \frac{1}{Q(x,y)} + \frac{1}{\xi_v} \right] \\
\big| \frac{\partial}{\partial y}  \phi_i(x,y) \big|  &<  
C \cdot \mathrm{exp}\left( - \frac{Q(x,y)}{ \xi_v } \right)  \left[  \frac{1}{Q(x,y)} + \frac{1}{\xi_v} \right],
\end{align*}
and it follows  that
\begin{eqnarray*}\lefteqn{
| \phi_i(m+\mu,n+\nu) - \phi_i(m-\mu,n-\nu)  |  } \\
& \le&  
2\cdot \mathrm{sup}_{ |s| <1} \ \big|  \frac{d}{ds}  \phi_i(m+s\mu,n+s \nu) \big| \\
& \le & 
4 C \cdot \mathrm{sup}_{| s| <1} \mathrm{exp}\left( - \frac{Q(m+s\mu,n+s\nu)}{ \xi_v } \right) 
 \left[  \frac{1}{Q( m+s\mu,n+s\nu)} + \frac{1}{\xi_v} \right].
\end{eqnarray*}
Some elementary estimates  show that there are  constants  $C_1$ and $C_2$, 
independent of $\mu$, $\nu$, and $\xi_v$, such that
\begin{eqnarray*}\lefteqn{
\sum_{ \substack{m,n\in \Z \\ (m,n)\not= (0,0)}}  
 \mathrm{sup}_{| s| <1} \mathrm{exp}\left( - \frac{Q(m+s\mu,n+s\nu)}{ \xi_v } \right) 
 \left[  \frac{1}{Q( m+s\mu,n+s\nu)} + \frac{1}{\xi_v} \right] }  \\
&\le&  
C_1 \int_{x^2+y^2 > 1} \mathrm{exp} \left( -\frac{ C_2\cdot (x^2+y^2)}{\xi_v} \right)
 \left[  \frac{1}{x^2+y^2} + \frac{1}{\xi_v} \right] \, dx\, dy \\
&=&  
\pi C_1 \int_1^\infty    e^{- u C_2/\xi_v }  \frac{1}{u}  \, du
+
\pi C_1 \int_{1/\xi_v}^\infty    e^{- u C_2 }    \, du
\\
&=& \pi C_1 \cdot  \beta_1\left(\frac {C_2}{\xi_v} \right)   + O(1) \\
&=& O(\log ( \xi_v)),
\end{eqnarray*}
and the fourth claim follows.  
\end{proof}

\begin{proof}[Proof of Proposition \ref{Prop:main growth}]
As noted earlier, the boundedness of $\mathcal{E}_f$ on $\Omega$ is a consequence of  
(\ref{omega decomp}), 
Lemma \ref{Lem:omega24}, and Lemma \ref{Lem:omega13}.
Next we compute, using (\ref{derivative})  and the first estimate
of Lemma \ref{Lem:derivative estimates}, 
\begin{eqnarray*}
\frac{\partial }{\partial  q}  \sum_{\eta\in \delta_\kk\overline{\mathfrak{a}}_0}  
\beta_1 \left(  \frac{   |  \Psi_f  +\eta  |^2}{ \xi_v } \right)  
&=& 
  \frac{1}{ \xi_v } 
 \sum_{\eta\in \delta_\kk\overline{\mathfrak{a}}_0} \mathrm{exp}  \left(  - \frac{  |\Psi_f +\eta|^2  }{ \xi_v }  \right)  
    \frac{\partial \xi_v}{\partial q}  \\
&=&
\left[ \frac{\pi}{\mathrm{Vol}(\C/\delta_\kk\overline{\mathfrak{a}}_0) }   + O\left( 1/\xi_v \right) \right]  \frac{\partial \xi_v}{\partial q}.
\end{eqnarray*}
It now follows  from (\ref{partial error}) and (\ref{tau coords})  that
\[
\frac{\partial \mathcal{E}_f}{\partial q} = O\left(  \frac{1}{\xi} \frac{\partial \xi}{\partial q} \right) 
= O\left( \frac{1}{ |q|  \log|q| } \right)   .
\]
Similarly,  writing $u={}^t [u_1, \ldots, u_{n-2}]$ and 
using  the first and fourth estimates of Lemma \ref{Lem:derivative estimates},
\begin{eqnarray*}\lefteqn{
\frac{\partial }{\partial  u_i}  \sum_{  \eta\in \delta_\kk\overline{\mathfrak{a}}_0 } 
  \beta_1 \left(  \frac{  |  \Psi_f  +\eta |^2}{ \xi_v } \right)  } \\
& = & - \sum_{  \eta\in \delta_\kk\overline{\mathfrak{a}}_0}  
\mathrm{exp} \left( - \frac{  |  \Psi_f +\eta |^2}{ \xi_v } \right)  
\frac{  1}{  (  \Psi_f+\eta ) }  \frac{ \partial \Psi_f}{\partial  u_i}   
+  \sum_{  \eta\in \delta_\kk\overline{\mathfrak{a}}_0}  
\mathrm{exp} \left( - \frac{  |  \Psi_f +\eta |^2}{ \xi_v} \right)  
\frac{1}{\xi}  \ \frac{\partial \xi_v }{\partial u_i}   \\
& = &
-   \mathrm{exp} \left( - \frac{  |  \Psi_f  |^2}{ \xi_v } \right)  \frac{1}{\Psi_f}   +O(\log(\xi_v))     \\
& = &
-\frac{\partial }{\partial  u_i} \left(  \log|\Psi_f|^2 
 +   \frac{\pi \xi_v }{\mathrm{Vol}(\C/\delta_\kk\overline{\mathfrak{a}}_0) }   \right)
+ O(\log(\xi_v)),
\end{eqnarray*}
which shows that \[ \frac{\partial \mathcal{E}_f}{\partial u_i} = O(\log\log|q^{-1}|).\]
These calculations show that $\partial \mathcal{E}_f$ has $\log$-$\log$ growth along $q=0$, 
and the proof for $\overline{\partial}\mathcal{E}_f$ is the same.

The growth of $\partial \overline{\partial} \mathcal{E}_f$ is controlled in the same way.  
Lemma \ref{Lem:derivative estimates} implies  that
\begin{eqnarray*}
\frac{\partial ^2}{  \partial  q \partial \overline{u}_i}      \sum_{   \eta\in \delta_\kk\overline{\mathfrak{a}}_0  }
\beta_1 \left(  \frac{  |  \Psi_f +\eta  |^2}{ \xi_v} \right)  &= &
 -   \sum_{  \eta\in \delta_\kk\overline{\mathfrak{a}}_0   }
 \mathrm{exp} \left( - \frac{  |  \Psi_f +\eta |^2}{ \xi_v} \right) 
 \frac{  (\Psi_f+\eta)}{\xi_v^2}   \frac{\partial \overline{\Psi}_f }{\partial \overline{u}_i}    \frac{\partial \xi_v}{\partial q} \\
& & 
+   \sum_{    \eta\in \delta_\kk\overline{\mathfrak{a}}_0  }  \mathrm{exp} \left( - \frac{  |  \Psi_f +\eta |^2}{ \xi_v} \right)  \frac{   |\Psi_f+\eta|^2 - \xi_v }{\xi_v^3} \frac{\partial \xi_v}{\partial \overline{u}_i}
    \frac{\partial \xi_v}{\partial q} \\
& = & 
O(1/\xi_v)  \frac{\partial \overline{\Psi}_f  }{\partial \overline{u}_i}    \frac{\partial \xi_v}{\partial q} 
+ O(1/\xi_v) \frac{\partial \xi_v}{\partial \overline{u}_i}  \frac{\partial \xi_v}{\partial q} \\
& = & O\left( \frac{1}{ |q| \log|q|} \right),
\end{eqnarray*}
which implies 
\[ 
\frac{\partial ^2\mathcal{E}_f}{  \partial  q \partial \overline{u}_i} = O\left(\frac{1}{|q| \log|q|}\right).
\]
The same method shows that
\[
\frac{\partial ^2\mathcal{E}_f}{  \partial \overline{ q} \partial u_i}= O\left(\frac{1}{ |q| \log|q|}\right),
\]
and  similarly
\begin{eqnarray*}
\frac{\partial ^2}{   \partial  q  \partial \overline{q} }   \sum_{   \eta\in \delta_\kk\overline{\mathfrak{a}}_0  } 
 \beta_1 \left(  \frac{   |  \Psi_f +\eta  |^2}{ \xi_v} \right)
&=&
   \sum_{   \eta\in \delta_\kk\overline{\mathfrak{a}}_0  }  \mathrm{exp} \left( - \frac{ |  \Psi_f +\eta |^2}{ \xi_v} \right)  
\left[  
\frac{ |\Psi_f+\eta|^2 - \xi_v}{\xi_v^3}
\right]  \frac{\partial \xi_v}{\partial q} \frac{\partial \xi_v}{\partial \overline{q}} \\
&=&
O(1/\xi_v^2) \cdot  \frac{\partial \xi_v}{\partial q} \frac{\partial \xi_v}{\partial \overline{q}}
\end{eqnarray*}
shows that 
\[
\frac{\partial ^2\mathcal{E}_f}{   \partial  q  \partial \overline{q} } =O\left(  \frac{1}{  (|q| \log|q|)^2} \right).
\]
Finally, Lemma \ref{Lem:derivative estimates} shows that
\begin{eqnarray*}\lefteqn{
\frac{\partial^2}{  \partial u_i  \partial \overline{u}_j} 
   \sum_{   \eta\in \delta_\kk\overline{\mathfrak{a}}_0  }   \beta_1 \left(  \frac{  |  \Psi_f +\eta  |^2}{ \xi_v} \right)  } \\
&=&
 \frac{1}{\xi_v}   \sum_{   \eta\in \delta_\kk\overline{\mathfrak{a}}_0  }   \mathrm{exp} \left( - \frac{  |  \Psi_f +\eta |^2}{ \xi_v} \right) 
\left(
\frac{\partial^2 \xi_v}{  \partial u_i  \partial \overline{u}_ j}
- \frac{1}{\xi_v} \frac{\partial \xi_v}{\partial u_i}   \frac{\partial \xi_v}{\partial \overline{u}_j} \right)  \\
& & 
 +   \frac{1}{\xi_v}   \sum_{   \eta\in \delta_\kk\overline{\mathfrak{a}}_0  }   \mathrm{exp} \left( - \frac{  |  \Psi_f +\eta |^2}{ \xi_v} \right) 
  \left(   \frac{\partial \Psi_f}{\partial u_i} - \frac{ (\Psi_f+\Lambda) }{\xi_v} \frac{\partial \xi_v}{\partial u_i} \right)
\left(   \frac{\partial \overline{ \Psi_f }}{\partial \overline{u}_j } -  \frac{\overline{(\Psi_f+\Lambda)} }{\xi_v} \frac{\partial \xi_v}{\partial \overline{u}_j} \right)  \\
& = & O(1),
\end{eqnarray*}
and it follows that 
\[
\frac{\partial^2 \mathcal{E}_f}{  \partial u_i  \partial \overline{u}_j} =O(1) .
\]
Thus $\partial\overline{\partial}\mathcal{E}_f$ has $\log$-$\log$ growth.

All claims of  Proposition \ref{Prop:main growth} follow from   (\ref{f main growth}) and the 
discussion above.
\end{proof}


\subsection{Arithmetic Kudla-Rapoport divisors}
\label{ss:arith KR}


Fix $m\not=0$ and $v\in \R^+$.  We will define a class
\[
\widehat{\mathcal{Z}} (m,v) \in \widehat{\mathrm{CH}}_\R^1(\mathcal{M}^*)
\]
in the arithmetic Chow group of Section \ref{ss:BKK}.

  The boundary $\partial\mathcal{M}^*$ of  $\mathcal{M}^*$ is a disjoint union 
of smooth irreducible divisors.  Denote  by $\pi_0(\partial\mathcal{M}^*)$  
the set of connected  components of $\partial\mathcal{M}^*_{/\kk^\alg}$.
Recall  from Section \ref{ss:complex uniform} 
that the  components of $\partial\mathcal{M}^*_{/\kk^\alg}$ are indexed by triples 
$(\mathfrak{A}_0 , \mathfrak{m} \subset \mathfrak{A} )$,
and  to each triple there is an associated self dual Hermitian lattice
$L= \Hom_{\co_\kk}(\mathfrak{A}_0,\mathfrak{A})$ of signature $(n-1,1)$.
The isotropic line $\mathfrak{m}\subset \mathfrak{A}$ determines an isotropic line
 \[
 \mathfrak{a}=\Hom_{\co_\kk}( \mathfrak{A}_0 , \mathfrak{m} ) \subset L
 \] 
 and the quotient 
 \[
 \mathfrak{a}^\perp/\mathfrak{a} \iso \Hom_{\co_\kk}(\mathfrak{A}_0, \mathfrak{m}^\perp/\mathfrak{m})
 \]
is  a self dual Hermitian lattice of signature $(n-2,0)$.

\begin{Def}
The \emph{$m$-index} of the  boundary component $\mathcal{C}\in \pi_0(\partial\mathcal{M}^*)$ indexed by
$(\mathfrak{A}_0,  \mathfrak{m} \subset  \mathfrak{A})$  is 
\[
\mathrm{Ind}_\mathcal{C} (m) =   \# \{ f \in \mathfrak{a}^\perp/\mathfrak{a} : \langle f,f\rangle =m \}.
\]
\end{Def}

\begin{Prop}
For any boundary component $\mathcal{C}\in \pi_0(\partial\mathcal{M}^*)$,
\[
\mathrm{Ind}_\mathcal{C}(m) =0 \iff  \mathcal{C}(\C)\cap \mathcal{Z}^*(m) (\C) =\emptyset .
\]
Furthermore, $\mathrm{Ind}_\mathcal{C}(m) =\mathrm{Ind}_{\mathcal{C}^\sigma}(m)$
for every  $\sigma\in \Gal(\kk^\alg/\kk)$.
\end{Prop}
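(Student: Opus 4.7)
The plan is to handle both claims using the analytic uniformization of Section \ref{ss:complex uniform} together with the local description (\ref{boundary KR}) of $\KR^*(m)(\C)$ near the boundary, and for the Galois statement, Proposition \ref{Prop:galois bnd action}.

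For the first claim, I would fix a normal decomposition $\mathfrak{A}=\mathfrak{m}\oplus\mathfrak{B}\oplus\mathfrak{n}$ of the pair $\mathfrak{m}\subset\mathfrak{A}$ indexing $\mathtt{B}$, so that $\Lambda := \mathfrak{a}^\perp/\mathfrak{a}$ is canonically identified with $\Hom_{\co_\kk}(\mathfrak{A}_0,\mathfrak{B})$. Since $\mathfrak{a}\subset\mathfrak{a}^\perp$ is isotropic, the quotient map $\mathfrak{a}^\perp \twoheadrightarrow \Lambda$ preserves the Hermitian form, so the induced map $L^\bndry(m)\to\{x\in\Lambda : \langle x,x\rangle=m\}$ is surjective. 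This gives $\mathrm{Ind}_\mathtt{B}(m)=0 \iff L^\bndry(m)=\emptyset$. By (\ref{boundary KR}), if $L^\bndry(m)=\emptyset$ then $\KR^*(m)(\C)$ is empty in a neighborhood of $\mathtt{B}(\C)$, so $\mathtt{B}(\C)\cap\KR^*(m)(\C)=\emptyset$. Conversely, for any $f\in L^\bndry(m)$ the explicit formula $\Psi_f=-\delta_\kk\overline{a}+{}^t\overline{b}Au$ from Section \ref{ss:boundary II} applies: the condition $\langle f,f\rangle=m\not=0$ forces $b\not=0$ (a pure $\mathfrak{a}$-vector is isotropic), and the restriction of $\Psi_f$ to the boundary locus $q=0$ is therefore a non-constant affine function of $u$ whose zero set is a non-empty affine hyperplane. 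This shows $\mathcal{D}^*(f)\cap\mathtt{B}(\C)\not=\emptyset$, which finishes the first claim.

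For the second claim, I would combine Proposition \ref{Prop:galois bnd action} with the analogous (classical CM) description of the Galois action on $\mathtt{M}_{(1,0)}(\kk^\alg)$. Under the bijection (\ref{hida labels}) specialized to signature $(1,0)$, the theory of CM elliptic curves tells us that $\sigma=\mathrm{rec}_\kk(\mathfrak{s})$ carries $\mathfrak{A}_0$ to $\mathfrak{A}_0\otimes\mathfrak{s}^{-1}$ (with the rescaled Hermitian form $\mathrm{N}(\mathfrak{s})\cdot h_{\mathfrak{A}_0}$), while Proposition \ref{Prop:galois bnd action} carries the cusp label $(\mathfrak{n},\mathfrak{B})$ to $(\mathfrak{n},\mathfrak{B}\otimes\mathfrak{s}^{-1})$. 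Thus $\mathtt{B}^\sigma$ is indexed by $(\mathfrak{A}_0\otimes\mathfrak{s}^{-1},\mathfrak{m}^\sigma\subset\mathfrak{A}^\sigma)$, and its associated Hermitian lattice is
\[
\Lambda^\sigma \iso \Hom_{\co_\kk}(\mathfrak{A}_0\otimes\mathfrak{s}^{-1},\mathfrak{B}\otimes\mathfrak{s}^{-1}) \iso \Hom_{\co_\kk}(\mathfrak{A}_0,\mathfrak{B}) \iso \Lambda
\]
canonically as an $\co_\kk$-module. The simultaneous rescalings of $h_{\mathfrak{A}_0}$ and $h_{\mathfrak{B}}$ by $\mathrm{N}(\mathfrak{s})$ cancel in the defining identity (\ref{form def}), so the induced Hermitian form on $\Lambda^\sigma$ coincides with that on $\Lambda$. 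Counting norm-$m$ vectors on each side yields $\mathrm{Ind}_\mathtt{B}(m)=\mathrm{Ind}_{\mathtt{B}^\sigma}(m)$.

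The main obstacle I anticipate is unwinding the Galois action on the $\mathtt{M}_{(1,0)}$ factor carefully enough, since Proposition \ref{Prop:galois bnd action} spells out only the action on the boundary of $\mathtt{M}^*_{(n-1,1)}$. The needed statement for $\mathtt{M}_{(1,0)}$ is however classical CM theory for $\co_\kk$, provable by the same argument cited in the proof of that proposition; once this is in place, the bookkeeping with Hermitian lattices is routine.
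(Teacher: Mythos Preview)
Your proposal is correct and follows essentially the same route as the paper's proof: both parts use the explicit local chart (\ref{boundary KR}) for the first claim and Proposition \ref{Prop:galois bnd action} together with classical CM theory on the $\mathtt{M}_{(1,0)}$ factor for the second. Your argument is in fact slightly more careful than the paper's in two places: you explicitly justify $b\neq 0$ (needed for the solvability of ${}^t\overline{b}Au=\delta_\kk\overline{a}$, which the paper asserts without comment), and you spell out why the $\mathrm{N}(\mathfrak{s})$-rescalings on $h_{\mathfrak{A}_0}$ and $h_{\mathfrak{B}}$ cancel in (\ref{form def}).
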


\begin{proof}
It is clear from (\ref{L decomp}) that $\mathrm{Ind}_\mathcal{C}(m) =0$ if and only if $L^\bndry(m) = \emptyset$.
If $L^\bndry(m) = \emptyset$ then (\ref{boundary KR}) shows that the 
component $\mathcal{C}(\C)$ has an open neighborhood in $\mathcal{M}^*(\C)$
which does not intersect $\mathcal{Z}(m)(\C)$, and so 
$\mathcal{C}(\C)\cap \mathcal{Z}^*(m) (\C) =\emptyset$.  On the other hand, if 
$L^\bndry(m) \not= \emptyset$ then (\ref{boundary KR}) shows that
for every $f\in L^\bndry(m)$
the pullback of (the support of) $\mathcal{Z}(m)(\C)$ to   
$C_\Gamma \backslash \mathcal{D}^\epsilon$  contains the vanishing locus of the function 
$\Psi_f(\hh) = -\delta_\kk \overline{a} + {}^t\overline{b} Au$.  If we  fix any solution 
$u_0\in \C^{m-2}$ to ${}^t\overline{b} Au = \delta_\kk\overline{a}$, then the point $(q,u)=(0,u_0)$
of  (\ref{boundary coords}) lies in $\mathcal{C}(\C)\cap \mathcal{Z}^*(m) (\C)$.

For the second claim, recall from Section \ref{ss:boundary labels} 
that the boundary components of $\mathcal{M}_{(n-1,1)/\kk^\alg}$ are indexed 
by the cusp labels of Definition \ref{Def:cusp label} (with $m=n-1$).  As the points of 
$\mathcal{M}_{(1,0)/\kk^\alg}$ are indexed by self dual Hermitian lattices of signature
$(1,0)$, the boundary components 
$\mathcal{C} \in \pi_0(\partial\mathcal{M}^*)$ are indexed by \emph{extended cusp labels}: triples
$(\mathfrak{A}_0,\mathfrak{n},\mathfrak{B})$ where $\mathfrak{A}_0$ a 
self dual Hermitian lattice of signature $(1,0)$, $\mathfrak{n}$ is a projective $\co_\kk$-module 
of rank one, and $\mathfrak{B}$ is a self dual Hermitian lattice of signature $(n-2,0)$.  
If $\mathcal{C}$ is indexed, in our old language, by the triple 
$(\mathfrak{A}_0, \mathfrak{m}  \subset \mathfrak{A} )$,
then its associated extended cusp label is $(\mathfrak{A}_0,\mathfrak{n},\mathfrak{B})$
where  $\mathfrak{n} = \mathfrak{A} / \mathfrak{m}^\perp$
and  $\mathfrak{B} = \mathfrak{m}^\perp  / \mathfrak{m}$.
In this notation,
\[
\mathfrak{a}^\perp / \mathfrak{a} \iso \Hom_{\co_\kk} (\mathfrak{A}_0 , \mathfrak{B})
\]
as Hermitian lattices.  The essential point is that 
Proposition \ref{Prop:galois bnd action} (and the usual theory of complex multiplication 
for elliptic curves) tell us that replacing $\mathcal{C}$ by  $\mathcal{C}^\sigma$ has the effect 
of replacing $\Hom_{\co_\kk} (\mathfrak{A}_0 , \mathfrak{B})$ by 
$\Hom_{\co_\kk} (\mathfrak{A}_0 \otimes \mathfrak{s}^{-1} , \mathfrak{B}\otimes \mathfrak{s}^{-1})$,
where the fractional ideal $\mathfrak{s}$ is chosen so that $\mathrm{rec}_\kk(\mathfrak{s})$
and $\sigma$ agree on the Hilbert class field of $\kk$.
But  the canonical isomorphism  of $\co_\kk$-modules
\[
\Hom_{\co_\kk} (\mathfrak{A}_0 , \mathfrak{B}) \iso 
\Hom_{\co_\kk} (\mathfrak{A}_0 \otimes \mathfrak{s}^{-1} , \mathfrak{B} \otimes \mathfrak{s}^{-1})
\]
respects the Hermitian forms, and so  
$\mathrm{Ind}_\mathcal{C}(m)$ and $\mathrm{Ind}_{\mathcal{C}^\sigma}(m)$ 
count the number of vectors of norm $m$ in isomorphic Hermitian lattices.
\end{proof}

By the previous proposition, for any $m\not=0$ the formal linear combination 
of geometric boundary components 
\begin{equation}\label{KR compactifier}
\mathcal{B}(m)_{/\kk} = \sum_{\mathcal {C}\in \pi_0(\partial\mathcal{M}^*) } \mathrm{Ind}_\mathcal{C}(m) \mathcal{C},
\end{equation}
\emph{a priori} a divisor on $\mathcal{M}^*_{/\kk^\alg}$,
descends to a divisor on $\mathcal{M}^*_{/\kk}$.  Denote
by $\mathcal{B}(m)$ the Zariski closure of (\ref{KR compactifier}) in $\mathcal{M}^*$. 
If $m<0$ then $\mathcal{Z}^*(m)=\emptyset$, and the previous proposition shows that
$\mathcal{B}(m)=\emptyset$.

\begin{Def}\label{Def:arithmetic KR}
Given  $m\not=0$ and $v\in \R^+$, define a divisor on $\mathcal{M}^*$ with real coefficients
\[
\mathcal{B}(m,v) = \frac{1}{4\pi v}\mathcal{B}(m) .
\]
The \emph{arithmetic Kudla-Rapoport divisor} is
\[
\widehat{\mathcal{Z}} (m,v) = \big( \mathcal{Z}^*(m) 
+  \mathcal{B}(m,v) , \Gr(m,v,\cdot)  \big)
\in \widehat{\mathrm{CH}}_\R^1(\mathcal{M}^*).
\]
\end{Def}

Of course for the definition to make sense we need to know that $\Gr(m,v,\cdot)$
is a Green function for the divisor $\mathcal{Z}^*(m) +  \mathcal{B}(m,v)$.
This is the content of the following theorem.

\begin{Thm}\label{Thm:good green}
Suppose $z$ is a complex point of some boundary component $\mathcal{C}\in\pi_0(\partial\mathcal{M}^*)$.
There is an open neighborhood  $V\subset \mathcal{M}^*(\C)$ of $z$ such that
the  smooth function 
\[
\mathcal{E}(\hh)=  \Gr(m,v,\hh)+  \log|\psi_m(\hh)|^2  
+  \frac{  \mathrm{Ind}_\mathcal{C} (m) }{4 \pi v} \log|q(\hh)|^2 
\]
on $V\smallsetminus \mathcal{C}(\C)$ is bounded, and the differential forms 
$\partial\mathcal{E}$, $\overline{\partial} \mathcal{E}$, and 
$\partial\overline{\partial} \mathcal{E}$  have  log-log growth along $\mathcal{C}(\C)$.
Here $\psi_m(\hh) =0$ is a local equation for  $\mathcal{Z}^*(m)(\C)$,
and $q(\hh)=0$ is a local equation for  the boundary component $\mathcal{C}(\C)$.
\end{Thm}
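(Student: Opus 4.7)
The plan is to assemble Theorem~\ref{Thm:good green} directly from the two propositions already proved in Sections~\ref{ss:boundary II} and its predecessor, so the real content has been extracted earlier; what remains is a bookkeeping argument in the analytic coordinates of Section~\ref{ss:complex uniform}.

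First I would reduce to the situation covered by those propositions. Any complex point $z$ of the boundary component $\mathtt{B}\in\mathcal{B}$ indexed by a triple $(\mathfrak{A}_0,\mathfrak{m}\subset\mathfrak{A})$ lies, after passing to the analytic uniformization, inside some tubular neighborhood of the form $X^\epsilon$ constructed in Section~\ref{ss:complex uniform}. On such a neighborhood we have the coordinates $(q,u)$ of (\ref{boundary coords}) with $q=0$ a defining equation for $\mathtt{B}(\C)$, and the Green function decomposes as
\[
\Gr(m,v,\hh)=\Gr^\interior(m,v,\hh)+\Gr^\bndry(m,v,\hh).
\]
Proposition~\ref{Prop:growth I} says that $\Gr^\interior(m,v,\hh)$ extends continuously across $q=0$ with vanishing boundary value and has $\partial$, $\bar\partial$, $\partial\bar\partial$ of log-log growth, so this summand contributes nothing problematic to $\mathcal{E}$.

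Next I would substitute Proposition~\ref{Prop:main growth} into the definition of $\mathcal{E}$. Writing $\mathcal{E}^\bndry$ as in that proposition and rearranging,
\[
\log|\loc_m(\hh)|^2+\Gr^\bndry(m,v,\hh)=\mathcal{E}^\bndry(\hh)+\frac{\mathrm{Ind}(m)\,\coord(\hh)}{4v\,\mathrm{Vol}(\C/\delta_\kk\overline{\mathfrak{a}}_0)},
\]
where $\mathcal{E}^\bndry$ is bounded with log-log derivatives. By the very definition of the Hermitian structure on $\mathfrak{a}^\perp/\mathfrak{a}$ attached to $(\mathfrak{A}_0,\mathfrak{m}\subset\mathfrak{A})$, the quantity $\mathrm{Ind}(m)$ appearing in Proposition~\ref{Prop:main growth} coincides with $\mathrm{Ind}_\mathtt{B}(m)$. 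Combining all of this,
\[
\mathcal{E}(\hh)=\mathcal{E}^\bndry(\hh)+\Gr^\interior(m,v,\hh)+\frac{\mathrm{Ind}_\mathtt{B}(m)\,\coord(\hh)}{4v\,\mathrm{Vol}(\C/\delta_\kk\overline{\mathfrak{a}}_0)}+\frac{\mathrm{Ind}_\mathtt{B}(m)}{4\pi v}\log|q|^{2}.
\]

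The final step is the coefficient match. Inserting the explicit formula (\ref{tau coords}),
\[
\coord(\hh)=-\frac{d_\kk^{3/2}\mathrm{N}(\mathfrak{a}_0)}{2\pi}\log|q|^{2}-{}^tuA\overline{u},
\]
and computing $\mathrm{Vol}(\C/\delta_\kk\overline{\mathfrak{a}}_0)=|\delta_\kk|^{2}\mathrm{N}(\mathfrak{a}_0)\mathrm{Vol}(\C/\co_\kk)=\tfrac{1}{2}d_\kk^{3/2}\mathrm{N}(\mathfrak{a}_0)$ (using that $\mathrm{Vol}(\C/\co_\kk)=\sqrt{d_\kk}/2$, which relies on our standing hypothesis that $d_\kk$ is odd), the coefficients of $\log|q|^{2}$ in the two terms proportional to $\mathrm{Ind}_\mathtt{B}(m)$ cancel exactly. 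What survives from $\coord(\hh)$ is the smooth polynomial $-{}^tuA\overline{u}$, which is obviously bounded on a small neighborhood of $z$ and whose $\partial,\bar\partial,\partial\bar\partial$ are smooth forms (hence of log-log growth). Combined with the bounds on $\mathcal{E}^\bndry$ from Proposition~\ref{Prop:main growth} and on $\Gr^\interior$ from Proposition~\ref{Prop:growth I}, this gives boundedness of $\mathcal{E}$ and log-log growth of its first and mixed second derivatives. The main analytic obstacle was already the content of Proposition~\ref{Prop:main growth}; here the only real task is the volume/normalization check that makes the $\log|q|^{2}$ singularities cancel with the prescribed coefficient $\mathrm{Ind}_\mathtt{B}(m)/(4\pi v)$.
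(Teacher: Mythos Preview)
Your proof is correct and follows essentially the same route as the paper's own argument: combine Propositions~\ref{Prop:growth I} and~\ref{Prop:main growth}, then use (\ref{tau coords}) together with the volume identity $\mathrm{Vol}(\C/\delta_\kk\overline{\mathfrak{a}}_0)=\tfrac{1}{2}d_\kk^{3/2}\mathrm{N}(\mathfrak{a}_0)$ to cancel the $\log|q|^2$ terms, leaving a smooth remainder in $u$. One trivial remark: the identity $\mathrm{Vol}(\C/\co_\kk)=\sqrt{d_\kk}/2$ holds for any imaginary quadratic field and does not require $d_\kk$ odd, so your parenthetical justification is unnecessary.
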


\begin{proof}
Recalling that $\Gr(m,v,\hh) =  \Gr^\bndry(m,v,\hh) + \Gr^\interior(m,v,\hh)$,
 Propositions \ref{Prop:growth I} and \ref{Prop:main growth} show that
 \[
 \Gr(m,v,\hh) + \log |\psi_m(\hh) |^2  
 - \frac{ \mathrm{Ind}_\mathcal{C}(m) \xi(\hh) }{ 4 v \mathrm{Vol}(\C/\delta_\kk \overline{\mathfrak{a}}_0) }
 \]
 is bounded, and its first and second order derivatives have log-log growth.  
 The equality (\ref{tau coords})  shows that the function 
 \[
 \frac{ d_\kk^{3/2} \mathrm{N}(\mathfrak{a}_0) }{2\pi} \log|q(\hh)|^2
 =   \frac{ \mathrm{Vol}(\C/\delta_\kk \overline{\mathfrak{a}}_0 )}{\pi} \log|q(\hh)|^2
 \]
differs from $-\xi(\hh)$ by a function extending smoothly across $\mathcal{C}(\C)$,
and the claim follows.
\end{proof}


\section{Intersections with CM cycles}


In this section we define a one dimensional stack $\mathcal{X}_\Phi$  
as a moduli space of abelian schemes with complex multiplication.
The stack $\mathcal{X}_\Phi$ admits a canonical morphism to $\mathcal{M}$.
By  reducing to the calculations of \cite{howardCM}, we  
compute the  arithmetic intersection agains $\mathcal{X}_\Phi$ of the Kudla-Rapoport divisors of 
Definition \ref{Def:arithmetic KR}, and relate these intersection numbers
to the Fourier coefficients of Eisenstein series.

Let $F$ be a totally real \'etale $\Q$-algebra  (in other words,  a product of totally real fields) 
of degree $n$.  In the main results $F$ will be a totally real field.   Define a CM algebra
\[
K = \kk\otimes_\Q F.
\]
 

\subsection{CM cycles}
\label{ss:cm cycles}


Recall that we have fixed an embedding  $\iota: \kk\to \C$.  
A  CM type $\Phi \subset\Hom_{\Q-\mathrm{alg}}(K,\C)$
is said to have \emph{signature}  $(n-1,1)$  if there is a unique $\varphi^\mathrm{sp} \in\Phi$
whose restriction to $\kk$ is equal to $\overline{\iota}$.  This distinguished $\varphi^\mathrm{sp}$
is called the \emph{special element} of $\Phi$.  There are $n$ distinct CM types of 
signature $(n-1,1)$, and each is uniquely determined by its special element.  
This fact implies that the subfield  of the complex numbers
\[
K_\Phi = \varphi^\mathrm{sp}(K)
\]
 contains the reflex field of $\Phi$, and in fact is equal to the reflex field except in 
 the degenerate case $n=2$.   In this degenerate  case  $K_\Phi$ is a biquadratic CM field, and the 
 reflex field of $\Phi$ is the unique quadratic imaginary subfield of $K_\Phi$ which is not 
 isomorphic to $\kk$.  In any case, let  $\co_\Phi \subset K_\Phi$ be the ring of integers.

 Fix a CM type $\Phi$ of signature $(n-1,1)$.
 The fixed embedding  $\iota:\co_\kk \to \C$ takes values in $\co_\Phi$, and we use this 
map to view $\co_\Phi$ as an $\co_\kk$-algebra.  Note that this map 
is the complex  conjugate of the composition
\[
\co_\kk \to \co_K \map{\varphi^\mathrm{sp}} \co_\Phi . 
\]
Fix also a set $x_1,\ldots, x_r \in \co_K$ of $\Z$-module generators of $\co_K$, and 
define a polynomial
\begin{equation}\label{det}
\mathrm{det}_\Phi(T_1,\ldots, T_r) = \prod_{\varphi\in\Phi}
( T_1\varphi(x_1) + \cdots + T_r\varphi(x_r) ) \in \co_\Phi[T_1,\ldots, T_r].
\end{equation}

As in Section \ref{ss:notation}, if $A\to S$ is an abelian scheme over an arbitrary base scheme, 
equipped with  an action $\kappa:\co_K\to \End(A)$, there is an induced action 
$x\mapsto  \kappa(\overline{x})^\vee$ of $\co_K$ on the dual abelian scheme $A^\vee$.

\begin{Def}
The \emph{CM cycle} $\mathcal{CM}_\Phi$ is the $\co_\Phi$-stack
parametrizing triples $(A,\kappa,\psi)$ in which
\begin{itemize}
\item $A\to S$ is an abelian scheme of relative dimension $n$ over an $\co_\Phi$-scheme $S$,
\item
$\kappa :\co_K \to \End(A)$ is an action of $\co_K$ on $A$,
\item
$\psi:A \to A^\vee$ is an $\co_K$-linear principal polarization of $A$,
\item
the pair $(A,\kappa)$ has CM type $\Phi$, 
in the sense that  the determinant 
\[
\det( T_1 x_1 + \cdots + T_r x_r ; \Lie(A))
\] 
is equal to the image of (\ref{det}) under $\co_\Phi[T_1,\ldots,T_r] \to \co_S[T_1,\ldots,T_r]$.
\end{itemize}
\end{Def}

The stack $\mathcal{CM}_\Phi$ is smooth and proper of relative dimension $0$ over
$\co_\Phi$, by \cite[Proposition 3.1.2]{howardCM}.

\begin{Lem}
Suppose $R$ is an $\co_\Phi$-algebra,  let
$\bm{J}_{\varphi^\mathrm{sp}}$ be the kernel of the ring homomorphism
$\co_K \otimes_\Z R \to R$
defined by $x\otimes r \to \varphi^\mathrm{sp}(x) r$, and  use the composition 
\[
i_R: \co_\kk \map{\iota} \co_\Phi \to R
\]
to view $R$ as an $\co_\kk$-algebra.
For any  $(A,\kappa,\psi) \in\mathcal{CM}_\Phi(R)$,  the
$\co_K$-stable $R$-submodule $\mathcal{F} = \bm{J}_{\varphi^\mathrm{sp}} \Lie(A)$ of 
$\Lie(A)$ satisfies the following properties: the quotient $\Lie(A)/\mathcal{F}$ is 
a locally free $R$-module of rank one, $\co_\kk$ acts on $\mathcal{F}$ through the 
structure map $i_R:\co_\kk\to R$, and $\co_\kk$ acts on $\Lie(A)/\mathcal{F}$ through
the complex conjugate of the structure map.
\end{Lem}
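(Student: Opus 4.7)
The plan is to verify separately the three properties claimed of $\mathcal{F}$. The assertion about the $\co_\kk$-action on the quotient $\Lie(A)/\mathcal{F}$ is immediate: because $\mathcal{F} = \bm{J}_{\varphi^\mathrm{sp}}\Lie(A)$, the $\co_K$-action on $\Lie(A)/\mathcal{F}$ factors through the isomorphism $(\co_K \otimes_\Z R)/\bm{J}_{\varphi^\mathrm{sp}} \iso R$ induced by $\varphi^\mathrm{sp}$, and restricting to $\co_\kk$ the composition is $\co_\kk \xrightarrow{\overline{\iota}} \co_\Phi \to R$, which is exactly the complex conjugate $\overline{i_R}$ of the structure map.

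The remaining two properties (that $\Lie(A)/\mathcal{F}$ is locally free of rank one, and that $\co_\kk$ acts on $\mathcal{F}$ through $i_R$, equivalently that $\bm{j} \cdot \mathcal{F} = 0$) both reduce to a single structural claim: locally on $\Spec(R)$, the $\co_K \otimes_\Z R$-module $\Lie(A)$ is cyclic with annihilator equal to $\bm{I} = \bigcap_{\varphi \in \Phi} \bm{J}_\varphi$.  Granted this, the local freeness of $\Lie(A)/\mathcal{F}$ follows from the identification
\[
\Lie(A)/\mathcal{F} \iso (\co_K \otimes_\Z R)/(\bm{I} + \bm{J}_{\varphi^\mathrm{sp}}) = (\co_K \otimes_\Z R)/\bm{J}_{\varphi^\mathrm{sp}} \iso R,
\]
where the middle equality uses the evident containment $\bm{I} \subset \bm{J}_{\varphi^\mathrm{sp}}$.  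For the vanishing of $\bm{j}$ on $\mathcal{F}$, observe that every $\varphi \in \Phi \setminus \{\varphi^\mathrm{sp}\}$ restricts to $\iota$ on $\co_\kk$, so $\bm{j} = \pi\otimes 1 - 1\otimes i_R(\pi)$ lies in $\bm{J}_\varphi$ for each such $\varphi$; combined with the trivial inclusion $\bm{j}\cdot\bm{J}_{\varphi^\mathrm{sp}} \subset \bm{J}_{\varphi^\mathrm{sp}}$, this forces $\bm{j} \cdot \bm{J}_{\varphi^\mathrm{sp}} \subset \bm{I}$, and hence $\bm{j}\cdot\mathcal{F} = \bm{j}\cdot\bm{J}_{\varphi^\mathrm{sp}} \cdot \Lie(A) \subset \bm{I}\cdot\Lie(A) = 0$.

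The main obstacle is the structural claim.  By Nakayama, cyclicity and the identification of the annihilator may be checked on geometric fibers, so I would fix a geometric point $\Spec(\kappa) \to \Spec(R)$ and work with the specialization $A_\kappa$.  The CM-type determinant condition asserts that for each $x \in \co_K$, multiplication by $x$ on the $n$-dimensional $\kappa$-vector space $\Lie(A_\kappa)$ has characteristic polynomial $\prod_{\varphi \in \Phi}(T - \varphi(x))$.  Decomposing $\co_K \otimes_\Z \kappa$ as a product of local Artinian rings over its maximal ideals and combining the characteristic polynomial information with the Cayley-Hamilton theorem shows that $\bm{I}$ annihilates $\Lie(A_\kappa)$.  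A further analysis of the Dieudonné module at primes where $K/\Q$ ramifies, mirroring the de Rham homology freeness argument in the proof of Proposition \ref{Prop:zero etale} and ultimately reflecting the étaleness of $\mathtt{CM}_\Phi$ over $\co_\Phi$ established in \cite{howardCM}, identifies the localization of $\Lie(A_\kappa)$ at each maximal ideal in its support with a cyclic module over the corresponding local factor, so that $\Lie(A_\kappa)$ is cyclic over $(\co_K \otimes_\Z \kappa)/\bm{I}$ as required.
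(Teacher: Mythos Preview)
Your overall strategy coincides with the paper's: establish that $\Lie(A)$ is (locally) isomorphic to $(\co_K \otimes_\Z R)/\bm{I}$ for a suitable ideal $\bm{I}$, and then read off the three properties. Your derivation of the properties from this structural claim is correct, and your observation that the $\co_\kk$-action on the quotient is immediate from the definition of $\mathcal{F}$ is a clean shortcut.

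However, there is a genuine gap in how you set up $\bm{I}$. You write $\bm{I} = \bigcap_{\varphi \in \Phi} \bm{J}_\varphi$, where each $\bm{J}_\varphi$ is presumably the kernel of $\co_K \otimes_\Z R \to R$ sending $x \otimes r \mapsto \varphi(x)r$. But this map only makes sense for $\varphi = \varphi^\mathrm{sp}$: the other $\varphi \in \Phi$ take values in $\C$ (or in subfields of $\C$ not mapping to $R$), and there is no canonical ring map $\co_K \to R$ attached to them. So $\bm{I}$ is not well-defined over an arbitrary $\co_\Phi$-algebra $R$, and your inclusion $\bm{j} \in \bm{J}_\varphi$ for $\varphi \neq \varphi^\mathrm{sp}$ has no meaning. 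The paper circumvents this by first reducing to the universal object and then to the completed \'etale local ring at a closed geometric point; that $R$ is the ring of integers of a complete local field embedding in $\C_\mathfrak{p}$, and after fixing $\C \iso \C_\mathfrak{p}$ every $\varphi \in \Phi$ does land in $\C_\mathfrak{p}$, so the intersection $\bm{J}_\Phi$ is a genuine ideal of $\co_K \otimes_\Z R$. The isomorphism $\Lie(A) \iso (\co_K \otimes_\Z R)/\bm{J}_\Phi$ is then imported from \cite[Section 2.1]{howardCM}.

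There is a second, smaller issue: Nakayama reduces cyclicity to fibers, but it does not identify the annihilator---knowing the annihilator on each geometric fiber does not determine it over $R$ (think of multiplication by $p$ on $\Z_p$, which is nonzero but vanishes on the special fiber). In the paper's setup this problem does not arise because the structural isomorphism is established directly over the completed local ring, not deduced from fibers. Your final paragraph gestures at exactly the ingredients the paper uses---the de Rham/Dieudonn\'e freeness and the smoothness of $\mathtt{CM}_\Phi$ over $\co_\Phi$---but does not carry them out; in effect you have outlined the paper's proof rather than given an alternative.
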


\begin{proof}
It suffices to prove this for the universal object over $\mathcal{CM}_\Phi$.
One can easily reduce further  to the case where $R$ is the completion of the \'etale local ring
of a closed geometric point  $z$ of $\mathcal{CM}_\Phi$, and $(A,\kappa,\psi)$ is the 
pullback of the universal object.
Such a geometric point has the form $z\in \mathcal{CM}_\Phi(\F_\mathfrak{p}^\alg)$  for some prime 
$\mathfrak{p} \subset \co_\Phi$.   Let $\C_\mathfrak{p}$ be the completion of 
an algebraic closure of $K_{\Phi,\mathfrak{p}}$, identify the residue field of  
$\co_{\C_\mathfrak{p}}$ with our fixed copy of  $\F_\mathfrak{p}^\alg$,
and fix a $K_\Phi$-algebra isomorphism $\C \iso \C_\mathfrak{p}$.  
As $\mathcal{CM}_\Phi$ is smooth of relative dimension $0$ 
over $\co_\Phi$, the ring  $R$ is isomorphic to the ring of integers of the 
completion of the maximal unramified 
extension of $K_{\Phi,\mathfrak{p}}$ inside $\C_\mathfrak{p}$.

We now view each $\varphi\in \Phi$ as taking values in $\C_\mathfrak{p}$, and 
let $\bm{J}_\Phi$ be the kernel of the ring homomorphism
\[
\co_K \otimes_\Z R \to \prod_{\varphi\in\Phi} \C_\mathfrak{p}
\]
defined by sending $x\otimes r$ to the tuple $( \varphi(x)r)_{\varphi}$. 
The results of \cite[Section 2.1]{howardCM} tell us that 
\[
\Lie(A) \iso (\co_K \otimes_\Z R)/\bm{J}_\Phi
\]
as $\co_K\otimes_\Z R$-modules.  Obviously $\bm{J}_\Phi\subset \bm{J}_{\varphi^\mathrm{sp}}$, and  so 
\[
\Lie(A) / \bm{J}_{\varphi^\mathrm{sp}}\Lie(A) \iso (\co_K \otimes_\Z R) /\bm{J}_{\varphi^\mathrm{sp}} \iso R.
\]
It is clear that $\co_K$ acts on $\Lie(A) / \bm{J}_{\varphi^\mathrm{sp}}\Lie(A)$ through $\varphi^\mathrm{sp}$,
and so $\co_\kk$ acts through $\varphi^\mathrm{sp}|_{\co_\kk}=\overline{ \iota }$, as desired.  
Finally, we must show that $\co_\kk$ acts on $\bm{J}_{\varphi^\mathrm{sp}}\Lie(A)$ through $\iota$.
This is clear from
\[
\bm{J}_{\varphi^\mathrm{sp}} \cdot \Lie(A) \iso \bm{J}_{\varphi^\mathrm{sp}}/\bm{J}_\Phi \hookrightarrow
\prod_{ \substack{  \varphi\in\Phi \\    \varphi\not=\varphi^\mathrm{sp} } } \C_\mathfrak{p}
\]
and the hypothesis $\varphi|_\kk =\iota$ for every $\varphi$ appearing 
in the product.
\end{proof}

The condition that $\Phi$ has signature $(n-1,1)$ implies that 
$(A,\kappa,\psi) \mapsto (A,\kappa|_{\co_\kk},\psi)$ defines a morphism
\[
\mathcal{CM}_\Phi \to \mathcal{M}^\naive_{(n-1,1) / \co_\Phi}.
\]  
The lemma says that this morphism lifts  to a canonical morphism
\[
\mathcal{CM}_\Phi \to \mathcal{M}_{(n-1,1) / \co_\Phi}
\]
defined by $(A,\kappa,\psi) \to (A,\kappa|_{\co_\kk},\psi,\mathcal{F})$.
In particular,   the  $\co_\Phi$-stack
\[
\mathcal{X}_\Phi =   \mathcal{M}_{(1,0)} \times_{\co_\kk} \mathcal{CM}_\Phi  
\]
admits a canonical morphism $\mathcal{X}_\Phi  \to \mathcal{M}_{/ \co_\Phi }.$


\subsection{The intersection formula}
\label{ss:final results}


In this subsection we assume that  the discriminant of  $F$ is odd and relatively  prime to $d_\kk$.

The structure morphism $\mathcal{X}_\Phi \to  \Spec(\co_\Phi)$ is proper and smooth of relative dimension $0$.
In particular $\mathcal{X}_\Phi$, now viewed as a stack over $\co_\kk$, is regular, and the structure 
map $\mathcal{X}_\Phi \to \Spec(\co_\kk)$ is finite and flat. 
Using the $\co_\kk$-morphism $\mathcal{X}_\Phi \to \mathcal{M}$, we obtain from  Section \ref{ss:BKK}  
a linear functional
\[
[ \cdot : \mathcal{X}_\Phi ] : \widehat{\mathrm{CH}}_\R^1(\mathcal{M}^* ) \to \R.
\]
We will evaluate this linear functional on the arithmetic Kudla-Rapoport divisors
of Definition \ref{Def:arithmetic KR}, at least under the hypothesis that $F$ is a field.
This hypothesis implies that $\mathcal{X}_\Phi$ and $\mathcal{Z}(m)$ 
intersect properly \cite[Theorem 3.8.4]{howardCM}, and so
\[
[  \widehat{\mathcal{Z}}(m,v)    :  \mathcal{X}_\Phi ] 
= I_\mathrm{fin}( \mathcal{Z}^*(m)  : \mathcal{X}_\Phi )
+  \Gr(m,v, \mathcal{X}_\Phi).
\]

\begin{Thm}\label{Thm:finite part}
Suppose  $m>0$.  If $F$ is a field then 
\[
I_\mathrm{fin}( \mathcal{Z}^*(m)  : \mathcal{X}_\Phi ) =
 \frac{ h(\kk)}{   w(\kk)  } 
 \sum_{  \substack{ \alpha\in F^{\gg 0}    \\  \mathrm{Tr}_{F/\Q}(\alpha)=m } }
\sum_{   \mathfrak{p} \subset \co_F      }    \log(\mathrm{N}(\mathfrak{p}))  
 \cdot \ord_{\mathfrak{p}} (\alpha \mathfrak{p}\mathfrak{d}_F  )
\cdot   \rho ( \alpha\mathfrak{p}^{-\epsilon_\mathfrak{p}}\mathfrak{d}_F ).
\]
Here $h(\kk)$ is the class number of $\kk$, $w(\kk)$ is the number of roots of unity in $\kk^\times$, 
the inner sum is over all primes  $\mathfrak{p}$ of $F$ nonsplit  in $K$, 
$\mathfrak{d}_F$ is the different of $F/\Q$, 
\[
\epsilon_\mathfrak{p} = \begin{cases}
1 & \mbox{if $\mathfrak{p}$ is unramified in $K$} \\
0 & \mbox{if $\mathfrak{p}$ is ramified in $K$,}
\end{cases}
\]
and 
\[
\rho(\mathfrak{a}) = 
\# \{ \mathfrak{A}\subset\co_K : \mathfrak{A}\overline{\mathfrak{A}} = \mathfrak{a} \co_K\}
\]
for any fractional $\co_F$-ideal $\mathfrak{a}$ (in particular, 
$\rho(\mathfrak{a})=0$ unless $\mathfrak{a}\subset\co_F$).
\end{Thm}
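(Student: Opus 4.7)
The plan is to reduce, through a sequence of base changes, to the intersection multiplicities computed in \cite{howardCM} on the naive moduli space $\mathtt{M}^\naive$. Two simplifications are immediate. Because $\mathtt{X}_\Phi$ is finite over $\Spec(\co_\kk)$ and maps into the open substack $\mathtt{M} \subset \mathtt{M}^*$, its image is disjoint from the boundary $\mathtt{Z}$, and so $\KR^*(m) \cap \mathtt{X}_\Phi = \KR(m) \cap \mathtt{X}_\Phi$ as subschemes; the entire intersection is therefore computed on the regular stack $\mathtt{M}$. Moreover, by Proposition \ref{Prop:local divisor}, $\KR(m)$ is Cartier on $\mathtt{M}$ with local equation $f$, and since $F$ is a field proper intersection of $\mathtt{X}_\Phi$ and $\KR(m)$ holds by \cite[Theorem 3.8.4]{howardCM}, making $f$ a non-zero-divisor on $\co_{\mathtt{X}_\Phi, z}$. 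All higher $\mathrm{Tor}$'s vanish and
\begin{equation*}
I_z^\serre(\KR(m) : \mathtt{X}_\Phi) = \mathrm{length}_{\co_{\mathtt{X}_\Phi \cap \KR(m), z}} \bigl( \co_{\mathtt{X}_\Phi, z}/(f) \bigr).
\end{equation*}

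The heart of the argument is to match this length with the corresponding one on $\mathtt{M}^\naive$. The Cartesian square $\KR(m) = \KR^\naive(m) \times_{\mathtt{M}^\naive} \mathtt{M}$, together with the factorization $\mathtt{X}_\Phi \to \mathtt{M} \to \mathtt{M}^\naive$ made possible by the canonical Kr\"amer lift $\mathcal{F} = \bm{J}_{\varphi^\mathrm{sp}} \Lie(A)$ at CM points (Lemma \ref{cm lift map}), produces an isomorphism of intersection schemes $\mathtt{X}_\Phi \times_\mathtt{M} \KR(m) \iso \mathtt{X}_\Phi \times_{\mathtt{M}^\naive} \KR^\naive(m)$; moreover $f$ may be taken to be the pullback of a local equation for $\KR^\naive(m)$, so the length transports verbatim. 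From the main calculation of \cite{howardCM}, each such length is $\ord_\mathfrak{p}(\alpha \mathfrak{p} \mathfrak{d}_F)$ for the appropriate totally positive $\alpha \in F$ with $\mathrm{Tr}_{F/\Q}(\alpha) = m$ and nonsplit prime $\mathfrak{p}$, and the count of intersection points on the $A$-side with this data is recorded by $\rho(\alpha \mathfrak{p}^{-\epsilon_\mathfrak{p}} \mathfrak{d}_F)$. The prefactor $h(\kk)/w(\kk)$ arises from the $A_0$-factor of $\mathtt{X}_\Phi$: summing $1/\#\Aut(A_0)$ over complex multiplication elliptic curves for $\co_\kk$ gives precisely $h(\kk)/w(\kk)$ from the $1/\#\Aut_{\mathtt{X}_\Phi}(z)$ weighting in the definition of $I_\mathrm{fin}$.

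The hard part will be the comparison of local rings on $\mathtt{M}$ and $\mathtt{M}^\naive$ at points where the latter fails to be regular: these are the points where the Pappas singular locus meets $\mathtt{X}_\Phi$, and where $\mathtt{M} \to \mathtt{M}^\naive$ has positive-dimensional fiber $\mathbb{P}^{n-1}$ by Theorem \ref{Thm:KP}(5). One must verify that the canonical Kr\"amer data $\mathcal{F} = \bm{J}_{\varphi^\mathrm{sp}} \Lie(A)$ from Lemma \ref{cm lift map} isolates a single point in each such fiber and that, despite the non-regularity of $\mathtt{M}^\naive$, the length $\mathrm{length}(\co_{\mathtt{X}_\Phi, z}/(f))$ computed with $f$ pulled back from $\mathtt{M}^\naive$ equals the intersection multiplicity of \cite{howardCM}. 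Once this local compatibility is established, Theorem \ref{Thm:finite part} follows by direct translation of the formulas proved there.
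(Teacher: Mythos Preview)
Your proposal is correct and follows the same route as the paper: kill the higher $\mathrm{Tor}$ terms on $\mathtt{M}$, use the purely formal base-change isomorphism
\[
\KR(m)\times_{\mathtt{M}}\mathtt{X}_\Phi \iso \KR^\naive(m)\times_{\mathtt{M}^\naive}\mathtt{X}_\Phi,
\]
and then quote the length computations of \cite{howardCM}. The paper argues $\mathrm{Tor}$-vanishing via Cohen--Macaulayness of both factors and Serre's theorem, whereas you use that $\KR(m)$ is locally principal on the regular stack $\mathtt{M}$ (Proposition~\ref{Prop:local divisor}); both are fine.

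Your final paragraph is unnecessary worry, and in one place slightly off. Once the isomorphism of intersection schemes $\mathtt{Y}$ is in hand, the number $\mathrm{length}_{\co_{\mathtt{Y},y}}(\co_{\mathtt{Y},y})$ is intrinsic to $\mathtt{Y}$ and does not depend on which ambient stack you sit $\mathtt{Y}$ inside. The Serre multiplicity is computed only on $\mathtt{M}$, where the higher $\mathrm{Tor}$'s vanish and the multiplicity \emph{is} that length; \cite{howardCM} then supplies exactly those lengths. No comparison of local rings on $\mathtt{M}$ versus $\mathtt{M}^\naive$ at singular points is required, and your assertion that the local equation $f$ can be chosen as a pullback from $\mathtt{M}^\naive$ is neither obviously true (the paper never shows $\KR^\naive(m)$ is Cartier on the non-regular $\mathtt{M}^\naive$) nor needed.
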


\begin{proof}
We reduce to the results of \cite{howardCM}.  The first observation is that the 
local rings of $\mathcal{Z}^*(m)$ and $\mathcal{X}_\Phi$  are Cohen-Macaulay,
by Proposition \ref{Prop:local divisor} and the smoothness of $\mathcal{X}_\Phi$ over $\co_\Phi$.
This implies, by a result of Serre \cite[p.~111]{serre00}, that the higher $\mathrm{Tor}$
terms vanish in the Serre intersection multiplicity.  Thus if we abbreviate
\begin{align*}
\mathcal{Y} &= \mathcal{Z}(m) \times_{\mathcal{M}} \mathcal{X}_\Phi \\
& \iso  ( \mathcal{Z}^\naive(m) \times_{\mathcal{M}^\naive} \mathcal{M} ) 
\times_{\mathcal{M}} \mathcal{X}_\Phi \\
& \iso \mathcal{Z}^\naive(m) \times_{\mathcal{M}^\naive} \mathcal{X}_\Phi
\end{align*}
we find
\begin{align*}
I_\mathrm{fin}(\mathcal{Z}^*(m)  : \mathcal{X}_\Phi ) 
=
\sum_{ \mathfrak{p} \subset\co_\kk }
\sum_{  y\in \mathcal{Y} (\F_\mathfrak{p}^\alg)     }
\frac{  \log(\mathrm{N}(\mathfrak{p}))}{\#\Aut(y) } 
\mathrm{length}_{ \co_{\mathcal{Y},y}  } (   \co_{ \mathcal{Y} , y} )  \\
= 
\sum_{ \mathfrak{q} \subset\co_\Phi }
\sum_{  y\in \mathcal{Y} (\F_\mathfrak{q}^\alg)     }
\frac{  \log(\mathrm{N}(\mathfrak{q}))}{\#\Aut(y) } 
\mathrm{length}_{ \co_{\mathcal{Y},y}  } (  \co_{ \mathcal{Y} , y} )
\end{align*}
(in the first line the inner sum is over morphisms $\Spec(\F_\mathfrak{p}^\alg) \to \mathcal{Y}$ of $\co_\kk$-stacks,
while in the second line the inner sum is over morphisms $\Spec(\F_\mathfrak{q}^\alg) \to \mathcal{Y}$ of 
$\co_\Phi$-stacks). The final expression is  computed in Theorems 3.7.2 and 3.8.4 of  \cite{howardCM}.
\end{proof}

The archimedean contribution to the intersection multiplicity was computed in 
Theorems 3.7.2 and 3.8.6 of \cite{howardCM}.  The result is as follows.

\begin{Thm}\label{Thm:infinite part}
Fix any nonzero $m\in \Z$, and any $v\in \R^+$.  If $F$ is a field then
\[
\Gr(m,v, \mathcal{X}_\Phi)=  \frac{ h(\kk)}{    w(\kk)  }   
 \sum_{  \substack{ \alpha\in F_-    \\  \mathrm{Tr}_{F/\Q}(\alpha)=m } }
 \beta_1(4\pi v |\alpha| ) \cdot  \rho(\alpha \mathfrak{d}_F).
\]
Here $F_-$ is the set of elements of $F$ that are negative at exactly one archimedean place, 
and  $|\alpha|$ is the absolute value of $\alpha$ at the unique archimedean 
place $v$ at which $\alpha_v<0$.
The notations $h(\kk)$, $w(\kk)$, and $\rho$ have the same meaning as in 
Theorem \ref{Thm:finite part}, and $\beta_1$ is the function (\ref{beta}).
\end{Thm}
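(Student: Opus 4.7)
The plan is to reduce the computation directly to the archimedean intersection formula already established in \cite[Theorems 3.7.2 and 3.8.6]{howardCM}. The key observation is that the Green function $\Gr(m,v,\cdot)$ is defined purely analytically via formula (\ref{green def}) using only the signature $(n-1,1)$ Hermitian lattice $L = \Hom_{\co_\kk}(\mathfrak{A}_0,\mathfrak{A})$ and the symmetric domain $\mathcal{D}$. It therefore depends only on the structure of $\mathtt{M}(\C)$ as a complex orbifold and is insensitive to the refinement from $\mathtt{M}^\naive$ to the Kr\"amer model $\mathtt{M}$ (indeed, on the generic fiber these are isomorphic since $d_\kk \in \co_\C^\times$).

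First I would verify that the morphism $\mathtt{X}_\Phi \to \mathtt{M}$ built in Section \ref{ss:cm cycles} via Lemma \ref{cm lift map}, when composed with $\mathtt{M} \to \mathtt{M}^\naive$ and then base-changed to $\C$, recovers the CM cycle studied in \cite{howardCM}. The subsheaf $\mathcal{F} = \bm{J}_{\varphi^{\mathrm{sp}}}\Lie(A)$ supplied by Lemma \ref{cm lift map} is the unique Kr\"amer structure, so no additional data is introduced. Consequently the sum
\[
\Gr(m,v,\mathtt{X}_\Phi) = \sum_{z\in \mathtt{X}_\Phi(\C)} \frac{\Gr(m,v,z)}{\#\Aut(z)}
\]
is identical to the archimedean contribution computed in \cite[Theorem 3.7.2]{howardCM} for the naive CM cycle.

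Next I would unwind what the evaluation at a CM point says. A point $z \in \mathtt{X}_\Phi(\C)$ is given by a pair $(A_0,A)$ with $A_0$ a CM elliptic curve (indexed by $\mathfrak{A}_0$) and $A$ an abelian variety of CM type $\Phi$. The $\co_\kk$-module $L = \Hom_{\co_\kk}(A_0,A)$ carries a commuting $\co_F$-action, and the $\kk$-Hermitian form $\langle\cdot,\cdot\rangle$ on $L$ refines to an $F$-valued Hermitian form $\langle\cdot,\cdot\rangle_F$ satisfying $\mathrm{Tr}_{F/\Q}\langle f,f\rangle_F = \langle f,f\rangle$. Thus the sum over $f\in L$ with $\langle f,f\rangle = m$ reorganizes as a sum over $\alpha \in F$ of trace $m$, together with a sum over $f_\alpha$ of $F$-Hermitian norm $\alpha$. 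The CM point $\hh_{CM} \in \mathcal{D}$ is the negative line singled out by $\varphi^\mathrm{sp}$; the signature condition at $\hh_{CM}$ forces $\alpha$ to be negative at exactly the archimedean place corresponding to $\varphi^\mathrm{sp}$ and positive elsewhere, i.e.\ $\alpha \in F_-$, and a direct computation at $\hh_{CM}$ yields $|\Psi_{f_\alpha}(\hh_{CM})|^2/\coord(\hh_{CM}) = |\alpha|$. This reduces the Green-function term at $z$ to $\beta_1(4\pi v|\alpha|)$ times a multiplicity counting admissible $f_\alpha$.

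The remaining step is the combinatorial identification of this multiplicity, summed over orbits $(A_0,A)$, with $\rho(\alpha\mathfrak{d}_F)$. The factor $h(\kk)/w(\kk)$ comes from summing over the isomorphism classes of CM elliptic curves $A_0$ weighted by automorphisms, while the count of $\co_K$-module structures on $L$ giving rise to a fixed $\alpha$ produces $\rho(\alpha\mathfrak{d}_F)$; the different $\mathfrak{d}_F$ enters through the comparison of the trace pairing on $F$ with the $\Z$-pairing induced by $\langle\cdot,\cdot\rangle$. All of this enumeration is precisely carried out in \cite[Theorem 3.8.6]{howardCM}, so the proof is completed by invocation of that theorem. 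The main obstacle is purely bookkeeping: carefully matching the parametrizations of CM points, Hermitian forms, and normalization conventions (especially the appearance of $\mathfrak{d}_F$) between the present setup and \cite{howardCM} so that the cited formula transfers verbatim.
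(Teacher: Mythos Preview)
Your proposal is correct and follows the same approach as the paper: both reduce directly to \cite[Theorems 3.7.2 and 3.8.6]{howardCM}. In fact, the paper gives no argument beyond the citation, whereas you supply the (correct) justification for why the passage from $\mathtt{M}^\naive$ to the Kr\"amer model $\mathtt{M}$ changes nothing at the archimedean place.
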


Recall the nonholomorphic modular form
\[
\mathcal{E}'_\Phi(i_F(\tau),0) = \sum_{m\in \Z} c_\Phi(m,v)\cdot q^m
\]
of the introduction.  The coefficients $c_\Phi(m,v)$ were computed in \cite{howardCM},
using  results of Yang \cite{Yang05}.  It was shown there that 
\[
c_\Phi(m,v) = \sum_{ \substack{\alpha\in F \\ \mathrm{Tr}(\alpha)=m } } b_\Phi(\alpha,v),
\]
for some real numbers  $b_\Phi(\alpha,v)$ determined explicitly 
by the formulas of \cite[Corollary 4.2.2]{howardCM}. Comparing those
formulas with  Theorems \ref{Thm:finite part} and \ref{Thm:infinite part} yields the following result.

\begin{Thm}\label{Thm:main intersection}
Fix any nonzero $m\in \Z$, and any $v\in \R^+$.  If $F$ is a field then
\[
[  \widehat{\mathcal{Z}}(m,v)  :  \mathcal{X}_\Phi ]  = 
  -   \frac{  h(\kk)}{    w(\kk)  }  \cdot \frac{ \sqrt{\mathrm{N}( d_{K/F} ) }  }{2^{r-1} }  \cdot c_\Phi(m,v).
\]
Here $d_{K/F}$ is the discriminant of $K/F$, and 
$r$ is the number of primes of $F$ ramified in $K$, including the archimedean primes.
\end{Thm}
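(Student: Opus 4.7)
The strategy is to decompose the intersection number into its finite and archimedean parts via \eqref{proper degree}, reduce each piece to a computation already carried out in \cite{howardCM}, and then match the result against the Fourier expansion $c_\Phi(m,v) = \sum_{\mathrm{Tr}_{F/\Q}(\alpha)=m} b_\Phi(\alpha,v)$ whose coefficients are written down explicitly by \cite[Corollary 4.2.2]{howardCM}.

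First, because $\mathtt{X}_\Phi$ is finite and flat over $\co_\kk$ it is proper, so its image in $\mathtt{M}^*$ cannot meet the boundary $\mathtt{Z}$; consequently $\mathtt{B}(m,v) \cap \mathtt{X}_\Phi = \emptyset$, and the boundary summand of the Kudla-Rapoport divisor contributes zero to $I_\mathrm{fin}$. Coupled with the proper intersection of $\KR^*(m)$ and $\mathtt{X}_\Phi$ established in \cite[Theorem 3.8.4]{howardCM} under the hypothesis that $F$ is a field, the characterization \eqref{proper degree} of arithmetic intersection gives
\[
[\widehat{\KR}(m,v) : \mathtt{X}_\Phi] = I_\mathrm{fin}(\KR^*(m) : \mathtt{X}_\Phi) + \Gr(m,v, \mathtt{X}_\Phi).
\]
For $m<0$ the divisor $\KR^*(m)$ is empty, so only the archimedean term survives; for $m>0$ both terms are nonzero and given by Theorem \ref{Thm:finite part} and Theorem \ref{Thm:infinite part} respectively.

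Next I would substitute those two theorems into the display above to obtain an expression of the form
\[
\frac{h(\kk)}{w(\kk)}\Bigg( \sum_{\substack{\alpha \in F^{\gg 0}\\ \mathrm{Tr}(\alpha)=m}} A_{\mathrm{fin}}(\alpha) + \sum_{\substack{\alpha \in F_-\\ \mathrm{Tr}(\alpha)=m}} A_\infty(\alpha,v)\Bigg),
\]
where $A_{\mathrm{fin}}(\alpha) = \sum_{\mathfrak{p}} \log\mathrm{N}(\mathfrak{p}) \cdot \ord_\mathfrak{p}(\alpha\mathfrak{p}\mathfrak{d}_F) \cdot \rho(\alpha \mathfrak{p}^{-\epsilon_\mathfrak{p}}\mathfrak{d}_F)$ is the local factor of Theorem \ref{Thm:finite part} and $A_\infty(\alpha,v) = \beta_1(4\pi v|\alpha|)\rho(\alpha\mathfrak{d}_F)$ is the one from Theorem \ref{Thm:infinite part}. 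The key final step is to compare these $\alpha$-by-$\alpha$ contributions with the Fourier coefficients $b_\Phi(\alpha,v)$ of $\mathcal{E}_\Phi'(i_F(\tau),0)$. The formulas of \cite[Corollary 4.2.2]{howardCM} show that $b_\Phi(\alpha,v)$ vanishes unless $\alpha$ is either totally positive (holomorphic contribution) or belongs to $F_-$ (non-holomorphic contribution involving $\beta_1$), and that in those two cases one has
\[
A_{\mathrm{fin}}(\alpha) = -\frac{\sqrt{\mathrm{N}(d_{K/F})}}{2^{r-1}} \cdot b_\Phi(\alpha,v), \qquad A_\infty(\alpha,v) = -\frac{\sqrt{\mathrm{N}(d_{K/F})}}{2^{r-1}} \cdot b_\Phi(\alpha,v).
\]
Summing over all $\alpha$ with $\mathrm{Tr}(\alpha)=m$ then produces the claimed identity.

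The main obstacle is not conceptual but rather the careful bookkeeping required in the final comparison: one must verify that the normalizing constant $\sqrt{\mathrm{N}(d_{K/F})}/2^{r-1}$, the signs, the powers of $\pi$ and $v$, and the automorphism counts on $\mathtt{X}_\Phi(\C)$ all line up so that the two local formulas above hold exactly, and that $b_\Phi(\alpha,v)$ indeed vanishes for $\alpha \notin F^{\gg 0} \cup F_-$. Since the analogous matching was carried out in \cite{howardCM} (ultimately relying on Yang's Fourier expansion \cite{Yang05} of the central derivative of the incoherent Eisenstein series), the present theorem amounts to repackaging those identities in the framework of the arithmetic Chow group $\widehat{\mathrm{CH}}_\R^1(\mathtt{M}^*)$ constructed in Section \ref{s:KR divisors}; the novelty is that the passage from the naive model to the compactification $\mathtt{M}^*$ produces no new contributions, precisely because $\mathtt{X}_\Phi$ does not meet the boundary and the log-log nature of $\Gr(m,v,\cdot)$ is harmless on the interior point set $\mathtt{X}_\Phi(\C)$.
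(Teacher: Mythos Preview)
Your proposal is correct and follows essentially the same approach as the paper: decompose the arithmetic intersection via \eqref{proper degree} into the finite and archimedean pieces given by Theorems \ref{Thm:finite part} and \ref{Thm:infinite part}, then match the resulting $\alpha$-by-$\alpha$ contributions with the explicit Fourier coefficients $b_\Phi(\alpha,v)$ of \cite[Corollary~4.2.2]{howardCM}. The paper's proof is in fact even terser---it simply asserts that comparing those formulas yields the result---so your added remarks about the boundary not meeting $\mathtt{X}_\Phi$ and about the bookkeeping of normalizing constants are a faithful expansion of what the paper leaves implicit.
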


\begin{Conj}\label{Conj:improper}
Theorem \ref{Thm:main intersection} holds without the hypothesis that $F$ is a field.
That is to say, it also holds for $F$ a product of totally real fields.
\end{Conj}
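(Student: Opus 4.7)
The plan is to prove the conjecture by exploiting the product structure on both sides. Write $F = F_1 \times \cdots \times F_k$ with each $F_i$ a totally real field of degree $n_i$, so $K = K_1 \times \cdots \times K_k$ where $K_i = \kk \otimes_\Q F_i$. A signature $(n-1,1)$ CM type $\Phi$ of $K$ decomposes as $\Phi = \bigsqcup_i \Phi_i$ where, by the signature hypothesis, exactly one $\Phi_{i_0}$ has indefinite signature $(n_{i_0}-1,1)$ and each $\Phi_j$ for $j\neq i_0$ has definite signature $(n_j,0)$. This yields a decomposition $\mathtt{CM}_\Phi \iso \mathtt{CM}_{\Phi_{i_0}} \times_{\co_\Phi} \prod_{j\neq i_0} \mathtt{CM}_{\Phi_j}$ after appropriate base change, in which each factor $\mathtt{CM}_{\Phi_j}$ with $j\neq i_0$ is finite \'etale over $\co_\Phi$ by Proposition \ref{Prop:zero etale}.

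The first step would be to establish a parallel factorization on the Eisenstein series side. The Hilbert modular Eisenstein series associated to the totally real algebra $F$ factors as $\mathcal{E}_\Phi(\tau, s) = \prod_i \mathcal{E}_{\Phi_i}(\tau_i, s)$. At the center $s=0$, the indefinite factor $\mathcal{E}_{\Phi_{i_0}}$ vanishes while each definite factor $\mathcal{E}_{\Phi_j}$ is a nonvanishing holomorphic genus theta series (by Siegel-Weil for totally definite Hermitian forms). Differentiating the product and evaluating at $s=0$ leaves only one surviving term, and restricting to the diagonal gives
\[
\mathcal{E}'_\Phi(i_F(\tau),0) = \mathcal{E}'_{\Phi_{i_0}}(i_{F_{i_0}}(\tau),0) \cdot \prod_{j\neq i_0} \mathcal{E}_{\Phi_j}(i_{F_j}(\tau),0).
\]
Consequently $c_\Phi(m,v)$ is a convolution of the field-case coefficients $c_{\Phi_{i_0}}(\cdot, v)$ with the holomorphic lattice-theoretic Fourier coefficients of the definite factors.

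On the intersection-theoretic side, the decomposition $\Hom_{\co_\kk}(A_0, A) = \bigoplus_i \Hom_{\co_\kk}(A_0, A_i)$ partitions the sum in Definition \ref{Def:KR} according to how a section $f$ distributes among the factors, with $\langle f,f \rangle = \sum_i \langle f_i, f_i \rangle$. For each partition $m = m_{i_0} + \sum_{j\neq i_0} m_j$ the contribution is naturally a product of a $\KR$-type contribution from factor $i_0$ and weighted lattice-point counts from the other factors. The $m_{i_0}$-contribution reduces directly to Theorems \ref{Thm:finite part} and \ref{Thm:infinite part} applied to the field $F_{i_0}$, weighted by the \'etale degree of $\prod_{j\neq i_0} \mathtt{CM}_{\Phi_j}$ over $\co_\Phi$. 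The counts from the $j\neq i_0$ factors should match the holomorphic Fourier coefficients of $\mathcal{E}_{\Phi_j}(\cdot,0)$, since both count representations of $m_j$ by the self dual positive definite Hermitian lattice $\Hom_{\co_\kk}(A_0, A_j)$.

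The principal obstacle is the improper intersection between $\mathtt{X}_\Phi$ and $\KR^*(m)$ that occurs precisely when sections $f$ are supported outside the factor $A_{i_0}$: in that case the defining condition places constraints only on the rigid \'etale components of $\mathtt{X}_\Phi$, and the locus therefore contains a positive-dimensional piece of $\mathtt{X}_\Phi$. To handle this I would invoke Chow's moving lemma on the generic fiber, as in the definition of $\pi^*$ in Section \ref{ss:BKK} (following \cite{soule92}), to replace $\KR^*(m)$ by a rationally equivalent arithmetic divisor meeting $\mathtt{X}_\Phi$ properly, and then track the resulting Green-function correction using the Green equation. The hope, driven by the product formula for $c_\Phi(m,v)$ above, is that this correction is precisely captured by the convolution terms with the theta-type factors $\mathcal{E}_{\Phi_j}(\cdot,0)$. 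Verifying this matching rigorously, and in particular controlling the archimedean contribution through the log-log estimates of Proposition \ref{Prop:main growth}, is the main technical hurdle I would expect.
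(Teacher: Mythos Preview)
The statement you are attempting to prove is labeled a \emph{Conjecture} in the paper; the paper offers no proof and explicitly remarks that the improper intersection of $\mathtt{X}_\Phi$ with $\KR(m)$ makes the non-field case ``considerably more challenging than Theorem~\ref{Thm:main intersection}.'' There is therefore no proof in the paper against which to compare your proposal.

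That said, your outline correctly isolates the product structure and the source of difficulty, but the final paragraph contains a genuine gap. Invoking Chow's moving lemma \emph{defines} $[\widehat{\KR}(m,v):\mathtt{X}_\Phi]$ in the improper case, but it does not \emph{compute} it: after moving, the resulting divisor has no moduli-theoretic description, so the deformation-theoretic length calculations of \cite{howardCM} that underlie Theorems~\ref{Thm:finite part} and~\ref{Thm:infinite part} are no longer available, and you give no alternative mechanism for evaluating the moved intersection. In analogous situations (for example the degenerate terms in \cite{KRY}) the improper contribution is not a bare convolution with theta coefficients but involves the Faltings height of the CM cycle, bringing in logarithmic derivatives of $L$-functions via formulas of Colmez type \cite{colmez}; these arithmetic constants have no counterpart anywhere in your sketch. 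Your ``hope'' that the moving-lemma correction is captured by the holomorphic theta factors $\mathcal{E}_{\Phi_j}(\cdot,0)$ is essentially a restatement of the conjecture rather than a step toward proving it.
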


The point, of course, is that if $F$ is not a field then $\mathcal{X}_\Phi$ and $\mathcal{Z}(m)$
intersect improperly in $\mathcal{M}$.  This means that 
$[  \widehat{\mathcal{Z}}(m,v) : \mathcal{X}_\Phi ] $ cannot
be computed using the simple formula (\ref{proper degree}), and so 
 Conjecture \ref{Conj:improper} is considerably  more challenging than Theorem \ref{Thm:main intersection}.

\bibliographystyle{plain}

\end{document}